\documentclass[11pt]{amsart}
\usepackage[utf8]{inputenc}
\usepackage{url}
\usepackage{xspace}
\usepackage{mathscinet}  
\newtheorem{thm}{Theorem}[section]
\newtheorem{prp}[thm]{Proposition}
\newtheorem{lem}[thm]{Lemma}
\newtheorem{cor}[thm]{Corollary}
\theoremstyle{definition}
\newtheorem{defn}[thm]{Definition}
\theoremstyle{remark}
\newtheorem{rem}[thm]{Remark}
\newtheorem{exmpl}[thm]{Example}


\newcommand{\ii}{\mathbf{i}}
\newcommand{\jj}{\mathbf{j}}

\newcommand{\NN}{\mathbb{N}}
\newcommand{\TT}{\mathbb{T}}
\newcommand{\XX}{\mathbb{X}}
\newcommand{\ZZ}{\mathbb{Z}}
\newcommand{\QQ}{\mathbb{Q}}
\newcommand{\QQi}{\mathbb{Q}_\infty}

\newcommand{\cC}{\mathcal{C}}
\newcommand{\cD}{\mathcal{D}}

\newcommand{\cT}{\mathcal{T}}

\newcommand{\Sym}{\mathfrak{S}}

\newcommand{\Sig}{\Sigma}
\newcommand{\sig}{\sigma}

\newcommand{\add}{\operatorname{add}}
\newcommand{\Aut}{\operatorname{Aut}}
\newcommand{\Clt}{{\operatorname{\mathsf{Clt}}}}
\newcommand{\coh}{\operatorname{coh}}
\newcommand{\End}{\operatorname{End}}
\newcommand{\Id}{\operatorname{Id}}
\newcommand{\ind}{\operatorname{ind}}
\newcommand{\Out}{\operatorname{Out}}
\newcommand{\PSL}{\operatorname{PSL}}
\newcommand{\rad}{\operatorname{rad}}
\newcommand{\Ker}{\operatorname{Ker}}
\newcommand{\Coker}{\operatorname{Coker}}
\newcommand{\Groth}{\operatorname{K}_0}
\newcommand{\rk}{\operatorname{rk}}
\newcommand{\Der}{\cD^b}
\newcommand{\lcm}{\operatorname{lcm}}
\newcommand{\Tcan}{T_{\operatorname{can}}}
\newcommand{\md}{\operatorname{mod}}
\newcommand{\slope}{\operatorname{S}}
\newcommand{\slopeset}{\overline{\slope}}
\newcommand{\ibar}[1]{\overline{#1}}

\newcommand{\rightrel}{\mathchoice{\begin{picture}(20,9)\qbezier[7](3,2.6)(9,2.6)(15,2.6)\put(14,2.6){\vector(1,0){3}}\end{picture}}
{\begin{picture}(20,9)\qbezier[7](3,2.6)(9,2.6)(15,2.6)\put(14,2.6){\vector(1,0){3}}\end{picture}}
{\begin{picture}(15,5)\qbezier[6](2,2)(6.5,2)(11,2)\put(10,2){\vector(1,0){3}}\end{picture}}
{\begin{picture}(12,4)\qbezier[7](1.5,1.5)(5,1.5)(8.5,1.5)\put(7.5,1.5){\vector(1,0){3}}\end{picture}}}

\newcommand{\rightar}{\mathchoice{\begin{picture}(20,9)\put(3,2.6){\vector(1,0){14}}\end{picture}}
{\begin{picture}(20,9)\put(3,2.6){\vector(1,0){14}}\end{picture}}
{\begin{picture}(15,5)\put(2,2){\vector(1,0){11}}\end{picture}}
{\begin{picture}(12,4)\put(1.5,1.5){\vector(1,0){9}}\end{picture}}}

\newcommand{\ra}{\rightarrow}

\newcommand{\oq}{\overline{q}}

\newcommand{\ie}{i.e.\@\xspace}

\newcommand{\sD}{\mathsf{D}}
\newcommand{\sE}{\mathsf{E}}

\newcommand{\HVCenter}[1]{\setbox 0=\hbox{#1}%
        \dimen0=\wd0%
        \dimen1=\ht0%
        \divide\dimen0 by 2%
        \divide\dimen1 by 2%
        \hskip -\dimen0%
        \lower \dimen1%
        \box0%
        \hskip -\dimen0}
\newcommand{\HBCenter}[1]{\setbox 0=\hbox{#1}%
        \dimen0=\wd0%
        \dimen1=\ht0%
        \divide\dimen0 by 2%
        \hskip -\dimen0%
        \box0%
        \hskip -\dimen0}
\newcommand{\HTCenter}[1]{\setbox 0=\hbox{#1}%
        \dimen0=\wd0%
        \dimen1=\ht0%
        \divide\dimen0 by 2%
        \hskip -\dimen0%
        \lower \dimen1%
        \box0%
        \hskip -\dimen0}
\newcommand{\RTCenter}[1]{\setbox 0=\hbox{#1}%
        \dimen0=\wd0%
        \dimen1=\ht0%
        \hskip -\dimen0%
        \lower \dimen1%
        \box0%
        \hskip -\dimen0}
\newcommand{\LTCenter}[1]{\setbox 0=\hbox{#1}%
        \dimen0=\wd0%
        \dimen1=\ht0%
        \lower \dimen1%
        \box0%
        \hskip -\dimen0}
\newcommand{\RVCenter}[1]{\setbox 0=\hbox{#1}%
        \dimen0=\wd0%
        \dimen1=\ht0%
        \divide\dimen1 by 2%
        \hskip -\dimen0%
        \lower \dimen1%
        \box0%
        \hskip -\dimen0}
\newcommand{\RBCenter}[1]{\setbox 0=\hbox{#1}%
        \dimen0=\wd0%
        \dimen1=\ht0%
        \hskip -\dimen0%
        \box0%
        \hskip -\dimen0}
\newcommand{\LVCenter}[1]{\setbox 0=\hbox{#1}%
        \dimen1=\ht0%
        \divide\dimen1 by 2%
        \lower \dimen1%
        \box0%
        \hskip -\dimen0}

\parindent=0pt
\parskip=4pt

\begin{document}

\title[Tubular cluster algebras II]{Tubular cluster algebras II: 
Exponential growth}
\date{}

\author{M. Barot}
\address{Instituto de Matemáticas, Universidad Nacional Autónoma de México, 
Ciudad Universitaria, 04510 México, D.F., MEXICO}
\email{barot@matem.unam.mx}

\author{Ch. Geiss}
\address{Instituto de Matemáticas, Universidad Nacional Autónoma de México, 
Ciudad Universitaria, 04510 México, D.F., MEXICO}
\email{christof@matem.unam.mx}

\author{G. Jasso}
\address{Graduate School of Mathematics, Nagoya University, 464-8602 Nagoya,
JAPAN}
\email{jasso.ahuja.gustavo@b.mbox.nagoya-u.ac.jp}

\begin{abstract}
Among the mutation finite cluster algebras the tubular ones are a 
particularly interesting class. 
We show that all tubular (simply laced) cluster algebras are
of exponential growth by two different methods: first by studying 
the automorphism group of the corresponding cluster category
and second by giving explicit sequences of mutations.
\end{abstract} 
\maketitle

\sloppy


\section{Introduction}

Tubular cluster algebras where introduced in~\cite{BarGei12} as a
proper family of cluster algebras, due to their categorification by tubular 
cluster categories. 
These cluster algebras represent three of the 11 exceptional mutation 
finite cluster algebras with skew symmetric exchange matrix~\cite{FeShTu08} 
and one is the surface algebra corresponding to the 4-punctured sphere. 
Figure~\ref{fig:ellipticR} shows representatives of their exchange matrices in 
quiver form. 

\begin{figure}[h]
\begin{center}
  \begin{picture}(300,40)
    \put(10,20){
      \put(-10,15){\RVCenter{\small $\sD_4^{(1,1)}:$}}
      \multiput(0,-15)(30,0){3}{\circle*{3}}
      \multiput(0,15)(30,0){3}{\circle*{3}}
      \multiput(26,-15)(30,30){2}{\vector(-1,0){22}}
      \multiput(4,15)(30,-30){2}{\vector(1,0){22}}
      \multiput(28.5,11)(3,0){2}{\vector(0,-1){22}}
      \multiput(3,-12)(30,0){2}{\vector(1,1){24}}
      \multiput(27,-12)(30,0){2}{\vector(-1,1){24}}
    }
    \put(150,20){
      \put(0,15){\RVCenter{\small $\sE_6^{(1,1)}:$}}
      \multiput(0,0)(30,0){2}{\circle*{3}}
      \multiput(50,-15)(0,30){2}{
      	\multiput(0,0)(30,0){3}{\circle*{3}}
      }
      \multiput(76,15)(30,0){2}{\vector(-1,0){22}}
      \multiput(54,-15)(30,0){2}{\vector(1,0){22}}
      \put(4,0){\vector(1,0){22}}
      \multiput(48.5,11)(3,0){2}{\vector(0,-1){22}}
      \put(46.4,-12.3){\vector(-4,3){12}}
      \put(53,-12){\vector(1,1){24}}
      \put(34.6,2.7){\vector(4,3){12}}
      \put(77,-12){\vector(-1,1){24}}
    }
  \end{picture}
\end{center} 
\begin{center}
  \begin{picture}(300,40)
    \put(0,20){
      \put(0,15){\RVCenter{\small $\sE_7^{(1,1)}$:}}
      \put(0,0){\circle*{3}}
      \multiput(20,-15)(30,0){4}{\circle*{3}}
      \multiput(20,15)(30,0){4}{\circle*{3}}
      \multiput(46,15)(30,0){3}{\vector(-1,0){22}}
      \multiput(24,-15)(30,0){3}{\vector(1,0){22}}
      \multiput(18.5,11)(3,0){2}{\vector(0,-1){22}}
      \put(16.4,-12.3){\vector(-4,3){12}}
      \put(23,-12){\vector(1,1){24}}
      \put(4.6,2.7){\vector(4,3){12}}
      \put(47,-12){\vector(-1,1){24}}
    }
    \put(150,20){
      \put(0,15){\RVCenter{\small $\sE_8^{(1,1)}$:}}
      \put(0,0){\circle*{3}}
      \multiput(20,-15)(30,0){3}{\circle*{3}}
      \multiput(20,15)(30,0){6}{\circle*{3}}
      \multiput(46,15)(30,0){5}{\vector(-1,0){22}}
      \multiput(24,-15)(30,0){2}{\vector(1,0){22}}
      \multiput(18.5,11)(3,0){2}{\vector(0,-1){22}}
      \put(16.4,-12.3){\vector(-4,3){12}}
      \put(23,-12){\vector(1,1){24}}
      \put(4.6,2.7){\vector(4,3){12}}
      \put(47,-12){\vector(-1,1){24}}
    }
  \end{picture}
\end{center} 
\caption{Quivers associated to some elliptic root systems}
\label{fig:ellipticR} 
\end{figure}
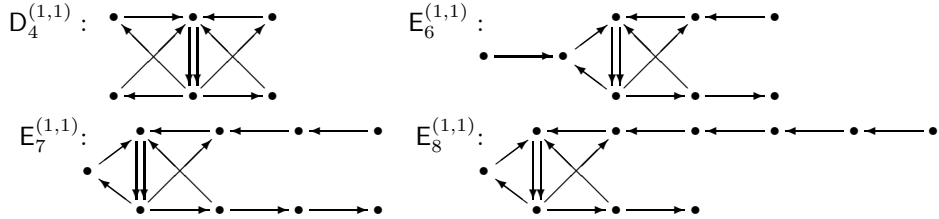

We refer to~\cite{BarGei12} for more details and context on tubular cluster 
algebras.

The tubular cluster algebra of type $(2,2,2,2)$ coincides
with the surface algebra of the 4-punctured sphere. From a mapping class
group argument~\cite[Sec.~11]{FST08} it follows that this algebra is
of exponential growth. In other words, the number of seeds which 
can be obtained from a fixed initial seed by at most $n$ mutations 
is bounded from below by an exponentially growing function of $n$.
Due to the similarity in their categorification one expects this to be true 
also in the remaining three cases which are not related to surfaces.

\begin{thm}
\label{thm:main}
Tubular cluster algebras are of exponential growth.
\end{thm}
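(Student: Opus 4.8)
The plan is to pass from counting seeds to counting vertices of the exchange graph $\sE$, and then to exhibit inside the symmetries of $\sE$ a subgroup of exponential growth. By the categorification established in~\cite{BarGei12}, the seeds of a tubular cluster algebra correspond to the cluster-tilting objects of the associated tubular cluster category $\cC$, and mutation of seeds corresponds to mutation of cluster-tilting objects; hence $\sE$ is a connected, $n$-regular (in particular locally finite) graph, and ``exponential growth'' amounts to the statement that the ball $B(T_0,n)$ of radius $n$ about a fixed cluster-tilting object $T_0$ satisfies $\abs{B(T_0,n)}\ge c\,\lambda^{n}$ for some constants $c>0$ and $\lambda>1$.

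First I would recall the tubular structure of $\cC$: its objects carry a well-defined slope in $\QQi=\QQ\cup\{\infty\}$, built from the rank and degree on the underlying weighted projective line, and every autoequivalence of $\cC$ acts on $\QQi$ by a fractional linear transformation. The key input, to be extracted from the derived-category description of the tubular case (Lenzing--Meltzer) and from~\cite{BarGei12}, is that $\Aut(\cC)$ surjects onto a subgroup of $\PSL_2(\ZZ)$ containing two hyperbolic elements with no common fixed point. The ping-pong lemma applied to these two elements then produces a free subgroup $F\le\Aut(\cC)$ of rank $2$ all of whose nontrivial elements are hyperbolic, and so fix no \emph{rational} point of $\QQi$.

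Next I would transport this group to $\sE$. Since an autoequivalence sends cluster-tilting objects to cluster-tilting objects and intertwines mutation with a relabelling of directions, $\Aut(\cC)$ acts on $\sE$ by graph isometries, and in particular so does $F$. Because the slope is invariant under this action while each nontrivial $g\in F$ fixes no rational slope, no nontrivial element of $F$ fixes any cluster-tilting object; that is, $F$ acts \emph{freely} on the vertices of $\sE$. Fixing $T_0$ and setting $D=\max_i d_{\sE}(T_0,g_i^{\pm1}T_0)$ over the two generators (finite by connectivity of $\sE$), any word $w$ of length $\le m$ satisfies $d_{\sE}(T_0,wT_0)\le Dm$, so all vertices $wT_0$ with $\abs{w}\le m$ lie in $B(T_0,Dm)$, and freeness makes $w\mapsto wT_0$ injective. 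As a free group of rank $2$ has at least $3^{m}$ elements represented by words of length $\le m$, this gives $\abs{B(T_0,Dm)}\ge 3^{m}$, hence $\abs{B(T_0,n)}\ge c\,\lambda^{n}$ with $\lambda=3^{1/D}>1$, the desired exponential lower bound.

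The step I expect to be the main obstacle is the middle one: verifying that $\Aut(\cC)$ really realizes enough of $\PSL_2(\ZZ)$ and that the induced action on $\sE$ is free (or at least proper). Concretely, one must confirm that the slope is a genuine invariant separating the relevant cluster-tilting objects and that the two chosen autoequivalences do not collapse upon passage to $\sE$. An alternative, fully explicit route bypasses $\Aut(\cC)$ altogether: exhibit by hand two mutation sequences $\mu,\nu$ of bounded length, compute their effect on $c$-vectors (equivalently, on the attached matrices in $\operatorname{GL}_n(\ZZ)$), and run ping-pong there to certify that the $3^{m}$ words of length $m$ in $\mu,\nu$ give distinct seeds reachable within $O(m)$ mutations. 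The obstacle in this second approach is purely computational, namely producing the sequences for each of the four tubular types and checking the ping-pong inequalities on the tropical data.
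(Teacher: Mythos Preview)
Your strategy coincides with the paper's first proof: both pass to the exchange graph of cluster tilting objects in the tubular cluster category $\cC$, exhibit a free subgroup of rank~$2$ inside $\Aut(\cC)$ via the inclusion $\PSL_2(\ZZ)\hookrightarrow\Aut(\cC)$ from~\cite{BaKuLe10}, and deduce a $k$-embedding of a binary tree into the exchange graph. The one substantive difference is how you justify that the free group acts \emph{freely} on cluster tilting objects. The paper does this abstractly: by the Dehy--Keller index argument an induced self-equivalence of the cluster tilting groupoid is determined by its effect on a single $[T]$ (Proposition~\ref{Prp_ClustModGp}(a)), and a schurian-endomorphism-ring argument (Proposition~\ref{Prp:TubIso}) then identifies $\Aut(\cC)$ with $\Aut_\iota(\Clt)$, so \emph{any} subgroup acts freely on its orbit of $[T]$. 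Your argument is instead geometric: you arrange the free subgroup to be Schottky, so every nontrivial element is hyperbolic on $\QQi$ and hence cannot preserve the finite rational slope set of a cluster tilting object. Both work; yours is more hands-on and specific to the tubular slope structure (and requires the extra care of ensuring all nontrivial elements are hyperbolic, not merely parabolic), while the paper's is more formal and applies to an arbitrary free subgroup of $\Aut(\cC)$ without controlling conjugacy types. Your ``alternative explicit route'' via mutation sequences is close in spirit to the paper's second proof, which implements it through Farey triples and explicit H\"ubner mutations rather than $c$-vectors.
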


We present in this paper two quite different proofs of this result. We think
that both proofs are interesting by themselves as they yield two different 
approaches to this phenomenon.

The first proof, given in Section~\ref{sec:firstproof}, is based on the 
result~\cite[Prop.~7.4]{BaKuLe10} which states that the
group of (isomorphism classes of) triangulated self-equivalences of a
tubular cluster category contains the group $\PSL_2(\ZZ)$. We show that this
descends to an inclusion of  $\PSL_2(\ZZ)$ into the corresponding
cluster modular group. This is, in some sense, an extension of the above 
argument which uses the mapping class group.

The second proof, given in Section~\ref{sec:secondproof}, provides in each of 
the four cases explicit mutation
sequences which directly 
exhibit the exponential growth. This argument is based on a careful
analysis of the lift of mutations (of cluster tilting
objects in the tubular cluster category) to Hübner-mutations 
(of tilting objects in
the corresponding category of coherent sheaves over a weighted projective
line). Another important ingredient in this approach is the close
connection between the exchange graph of Farey triples (a 3-regular tree) and 
the classification of tilting sheaves over a weighted projective line of 
tubular type. 

We would like to mention that Felikson, Shapiro and Tumarkin  recently 
completed their above mentioned classification of mutation finite cluster 
algebras by covering also the skew symmetrizable cases~\cite{FeShTu10}.
Moreover, they determine in~\cite{FeShTu11} (for the orbifold cases) and
in~\cite{FeShThTu12} (with H. Thomas for the remaining exceptional cases) 
the growth rate for all mutation finite cluster algebras. 
In particular, \cite{FeShThTu12}
provides an independent proof for the exponential growth of all tubular 
cluster algebras which is based on a direct study of the corresponding 
cluster modular groups. This includes also the non-simply laced cases.

\subsection*{Acknowledgments}
The second proof is essentially a part of the third author's 
master thesis~\cite{GJ}. 
This research was partially  supported by the grants  
PAPIIT No.~IN117010-2 and  CONACYT No.~81948.

\section{Preliminaries}
\subsection{Exponential growth of graphs} \label{ssec:exp-gr}
In our setting a \emph{graph} $G=(G_0, G_1)$ consists of a vertex set $G_0$ and
a edge set $G_1$ which is a subset of the set of two-element subsets of $G_0$.
A \emph{path in $G$ of length $n$} is a sequence of vertices
$(v_0,v_1,\ldots,v_n)$ such that $\{v_{k-1},v_k\}\in G_1$ for all
$k\in\{1,\ldots,n\}$.  For a vertex $v\in G_0$ we denote by
$v[n]\subset G_0$ the set of vertices which are connected to $v$ by a path
of length less than $n$. Finally, we say that $G$ is of \emph{exponential
growth} if for some $v\in G_0$  we can find a function $f$ of exponential
growth such that $\#(v[n])\geq f(n)$ for all $n\in\NN$. 

For example, we have $\#(v[n])=3(2^n-1)$ for each vertex $v$ of
the $3$-regular tree $\TT_3$.
Thus $\TT_3$ is of exponential growth.

Let $k\in\NN_{\geq 1}$ and $G, H$ be two graphs. By a \emph{$k$-embedding 
of $G$ into $H$} we mean an injective map $i\colon G_0\ra H_0$ such that
for each edge $\{v,w\}\in G_1$ there exists a path in $H$ of length at most
$k$, connecting $i(v)$ and $i(w)$. Obviously, $H$ is of exponential growth
if for some $k$, there exists a $k$-embedding of $\TT_3$ into $H$. 

\subsection{Beginning of the proof} \label{ssec:beg-pf}
By the main result of~\cite{BarGei12}, for a tubular cluster algebra the
exchange graph of seeds is isomorphic to the
exchange graph $\mathcal{G}$ of cluster tilting objects (in the corresponding 
tubular cluster category). Thus, by
the considerations in Section~\ref{ssec:exp-gr}, it is sufficient to construct
a $k$-embedding of a tree $\mathcal{T}$ of exponential growth into the exchange 
graph $\mathcal{G}$.
This will be done in the next two sections by different methods.
In the first proof $\mathcal{T}$ is a rooted binary tree and in the second 
proof it is $\TT_3$.


\section{Cluster modular group and self equivalences}
\label{sec:firstproof}
\subsection{Generalities} \label{ssec:Gp-Gen}
Let $\cC$ be a 2-Calabi-Yau triangulated category, see \cite{Keller08}, 
with split idempotents
over some field.
We denote the suspension functor of $\cC$ by $\Sig$.
We suppose that in $\cC$ there exists a cluster tilting object
$T$ such that there is a cluster structure in the sense
of~\cite{BIRSc09} on the cluster tilting objects reachable from $T$.
Without further mentioning all cluster tilting objects will be assumed to be 
basic.
We fix a cluster tilting object with its decomposition into indecomposable 
direct summands $T=T_1\oplus\cdots\oplus T_n\in\cC$. 

Following Keller~\cite[Sec.~5.5]{Keller11}, we consider the groupoid $\Clt$ 
of \emph{cluster tilting sequences} in $\cC$ reachable from $T$. Its objects
are the sequences $([T'_1],\ldots,[T'_n])$ of isomorphism classes
of indecomposable objects  such that
$T'=\oplus_{k=1}^n T'_k$ is a cluster tilting object in $\cC$ reachable from $T$.
Note that this implies that the summands $T'_i$ are \emph{rigid}, 
\ie $\cC(T'_i,\Sigma T'_i)=0$.
Morphisms
are formal compositions of (per-)mutations of cluster tilting objects, subject 
only to the obvious relations: $\mu_k^2=\Id$ and $\sig\mu_k=\mu_{\sig(k)}\sig$ 
for $k\in\{1,\ldots,n\}$ and each permutation $\sig\in\Sym_n$. 
For convenience we abbreviate $([T_1'],\ldots,[T_n'])=:[T']$.

We say that a triangulated self-equivalence $F$ of $\cC$ is \emph{reachable}
if we have 
$([FT_1],\ldots,[FT_n])\in\Clt$. 
In this case it is not hard to see that $F$ induces a self-equivalence
$\ibar{F}$ of $\Clt$ which we call \emph{induced}.
For a sequence of indices $\ii=(i_s,\ldots,i_2,i_1)$ with 
$i_a\in\{1,\ldots,n\}$ we define
$\mu_{\ii}=\mu_{i_s}\cdots\mu_{i_1}$. For a permutation $\sigma\in\Sym_n$ we set 
$\sigma(\ii)=(\sigma(i_s),\ldots,\sigma(i_1))$.

\begin{prp} \label{Prp_ClustModGp}
Let $F$ and $G$ be two reachable self-equivalences of $\cC$, 
\begin{itemize}
\item[(a)] 
$\ibar{F}=\ibar{G}$
if and only if $([FT_1],\ldots,[FT_n])=([GT_1],\ldots,[GT_n])$.
\item[(b)]
Suppose that for two sequences of indices $\ii$ and $\jj$ and permutations
$\sig,\tau\in\Sym_n$ we have 
$\ibar{F}([T])=\sig\mu_\ii([T_1],\ldots,[T_n])$ and
$\ibar{G}([T])=\tau\mu_\jj([T_1],\ldots,[T_n])$ in $\Clt$. Then
$\ibar{F}\circ\ibar{G}([T])=\tau\sig\mu_{\sig^{-1}(\jj)}\mu_\ii([T])$.
\end{itemize}
\end{prp}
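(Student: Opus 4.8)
The plan is to reduce everything to a single structural observation about the induced functor $\ibar{F}$, from which both parts follow by formal manipulation in the groupoid $\Clt$. The observation is that, because $F$ is a \emph{triangulated} equivalence, it is additive and carries exchange triangles to exchange triangles, hence commutes with categorical mutation (and trivially with permutation of summands): concretely, $F(\mu_k T')\cong\mu_k(FT')$. Therefore $\ibar{F}$ \emph{preserves the symbols} of the generating morphisms of $\Clt$, \ie it sends the mutation morphism $\mu_k\colon[T']\to[\mu_k T']$ to the mutation morphism $\mu_k\colon[FT']\to[\mu_k FT']$, and fixes each permutation symbol $\sig$. I would first record this carefully, reading off $F(\mu_k T')\cong\mu_k(FT')$ from the exchange triangle that defines the mutation together with $F\Sig\cong\Sig F$.

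The key consequence, which I would isolate, is an equivariance statement: for every morphism $w$ of $\Clt$ with source $[T]$ (a formal word in the $\mu_k$ and the $\sig$), one has $\ibar{F}\bigl(w([T])\bigr)=w\bigl(\ibar{F}([T])\bigr)$, where $w(-)$ denotes the target of applying the morphism $w$ to an object. This holds because $\ibar{F}(w)$ is a morphism carrying the same symbol $w$ but with source $\ibar{F}([T])$; its target is therefore $w(\ibar{F}([T]))$ on the one hand and $\ibar{F}(w([T]))$ on the other. The only point needing care is that the symbol-action on objects is well defined, \ie compatible with the relations $\mu_k^2=\Id$ and $\sig\mu_k=\mu_{\sig(k)}\sig$; this is immediate, since these relations already hold as identities of morphisms in $\Clt$.

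For part~(a) the forward implication is trivial: evaluating $\ibar{F}=\ibar{G}$ at the object $[T]$ gives $([FT_1],\ldots,[FT_n])=([GT_1],\ldots,[GT_n])$. For the converse I would use that every object of $\Clt$ is reachable, hence of the form $w([T])$ for some word $w$. Applying the equivariance statement to both $F$ and $G$ yields $\ibar{F}(w([T]))=w(\ibar{F}([T]))=w(\ibar{G}([T]))=\ibar{G}(w([T]))$, so the two functors agree on all objects; since both preserve symbols, they agree on each generating morphism (now determined by its symbol and its source, which coincide), and hence on all morphisms by functoriality, giving $\ibar{F}=\ibar{G}$.

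For part~(b) I would compute directly, writing $\ibar{G}([T])=\tau\mu_\jj([T])$ and applying equivariance with $w=\tau\mu_\jj$:
\[
(\ibar{F}\circ\ibar{G})([T])=\ibar{F}\bigl(\tau\mu_\jj([T])\bigr)=\tau\mu_\jj\bigl(\ibar{F}([T])\bigr)=\tau\mu_\jj\sig\mu_\ii([T]).
\]
It then remains to commute $\sig$ leftward through $\mu_\jj$. From $\sig\mu_k=\mu_{\sig(k)}\sig$ one obtains $\mu_k\sig=\sig\mu_{\sig^{-1}(k)}$ (replace $k$ by $\sig^{-1}(k)$), and iterating over the letters of $\jj=(j_t,\ldots,j_1)$ gives $\mu_\jj\sig=\sig\mu_{\sig^{-1}(\jj)}$ with $\sig^{-1}(\jj)=(\sig^{-1}(j_t),\ldots,\sig^{-1}(j_1))$ as defined above. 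Substituting produces $\tau\sig\mu_{\sig^{-1}(\jj)}\mu_\ii([T])$, as claimed. I expect the main obstacle to be the first step: a clean justification that $\ibar{F}$ preserves morphism symbols, resting on $F$ commuting with categorical mutation; everything afterward is formal manipulation inside $\Clt$ using only the two defining relations.
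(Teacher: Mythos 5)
Your argument is correct. Part (b) is exactly the paper's computation: apply the induced functor to the morphism $\tau\mu_\jj$ emanating from $[T]$, use that $\ibar{F}$ is a functor (your ``equivariance''), and then push $\sig$ leftward through $\mu_\jj$ via the relation $\sig\mu_k=\mu_{\sig(k)}\sig$, yielding $\tau\sig\mu_{\sig^{-1}(\jj)}\mu_\ii([T])$. In part (a) the nontrivial direction is where you diverge from the paper: you propagate the equality $[FT_i]=[GT_i]$ along mutation paths, using reachability of every object of $\Clt$ together with the fact that a triangulated self-equivalence commutes with categorical mutation (it preserves cluster tilting objects and exchange triangles, and the cluster structure guarantees uniqueness of the second complement, so $F(\mu_k T')\cong\mu_k(FT')$). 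The paper instead pins down each $[FT'_i]$ in a single step via the Dehy--Keller index: $FT'_i$ is rigid, its index with respect to $FT$ is computed from the approximation triangle of $T'_i$ by $\add(T)$, hence coincides with the index of $GT'_i$ once $[FT]=[GT]$, and rigid objects are determined up to isomorphism by their index. Both routes rest on the same hypotheses (the cluster structure on the reachable cluster tilting objects and the fact that equivalences preserve them); yours is more self-contained, trading the citation to the rigidity--index theorem for an induction along mutation sequences, while the paper's is shorter but imports that external result. Your final step in (a) --- that agreement on objects plus preservation of the generating symbols forces agreement on all morphisms --- is sound, since morphisms in $\Clt$ are formal words determined by their symbol and source.
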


\begin{proof}
(a) Recall that for each $X\in\cC$ there exists a distinguished triangle
$T_X''\ra T_X'\ra X \ra\Sigma T_X''$ with $T_X',T_X''\in\add(T)$. The
\emph{index} $\ind_T(X)$ of $X$ with respect to $T$ is $[T_X']-[T_X'']$
in the split Grothendieck group of $\add(T)$. In~\cite[Sec.~2]{DehKel08}
it is shown that in case $X$ is rigid, $X$ 
is determined up to isomorphism by its index $\ind_T(X)$. 
Moreover, each triangulated self-equivalence of $\cC$ sends
cluster tilting objects to cluster tilting objects. Thus our claim follows
since the cluster tilting objects reachable from $T$ have by hypothesis a 
cluster structure.

(b) Since $\ibar{F}$ is a self-equivalence of $\Clt$ we have
\[
\ibar{F}(\ibar{G}([T]))=\ibar{F}(\tau\mu_\jj([T]))=\tau\mu_\jj(F([T]))=
\tau\mu_\jj\sig\mu_\ii([T])=\tau\sig\mu_{\sig^{-1}(\jj)}\mu_\ii([T]).
\]
\end{proof}

By the above proposition, the induced self-equivalences of $\Clt$ form
a group, which we call the \emph{refined cluster modular group} 
$\Aut_\iota(\Clt)$.

\begin{rem}
The group $\Aut_\iota(\Clt)$ seems to be related to the cluster modular group 
defined by Fock and Goncharov~\cite[1.2.5]{FoGon09a}, see 
also~\cite[p.28]{FoGon06a}. Note that the endomorphism  ring of a cluster
tilting object in $\cC$ is in general not determined by its quiver.
For example, this occurs for the tubular cluster category of weight type
$(2,2,2,2)$, see~\cite[Expl.~6.12]{BeOpWrXX}. This category
can be used to categorify the cluster algebra associated to the sphere
with four punctures~\cite[Rem.~1.2]{BarGei12}.
\end{rem}

\begin{cor} \label{Cor:Aut}
\begin{itemize}
\item[(a)]
We have an injective map
\[
\Aut_\iota(\Clt)\ra\{([T'_1,],\ldots,[T'_n])\in\Clt\mid [T']\equiv [T]\},\  
\ibar{F}\mapsto\ibar{F}([T]),
\]
where $[T']\equiv[T]$ means that the assignment $T_i\mapsto T'_i$ for
$i=1,\ldots,n$ induces an equivalence of categories between $\add(T)$
and $\add(T')$.
\item[(b)]
Suppose, that $\Aut_\iota(\Clt)$ contains a free (non-abelian) subgroup
in two generators. Then 
there is a $k$-embedding of the (rooted) binary tree into the
exchange graph of cluster tilting objects.
\end{itemize}
\end{cor}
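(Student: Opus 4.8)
The plan is to prove Corollary~\ref{Cor:Aut} in two parts, using Proposition~\ref{Prp_ClustModGp} as the main engine. For part (a), the goal is to show the evaluation map $\ibar{F}\mapsto\ibar{F}([T])$ is both well-defined (lands in the stated target set) and injective. Injectivity is essentially immediate: if $\ibar{F}([T])=\ibar{G}([T])$, then in particular $([FT_1],\ldots,[FT_n])=([GT_1],\ldots,[GT_n])$, so part (a) of the Proposition forces $\ibar{F}=\ibar{G}$. The content is therefore in showing the map is well-defined. I would argue that for any reachable self-equivalence $F$, the assignment $T_i\mapsto FT_i$ induces an equivalence $\add(T)\to\add(FT)$, simply because $F$ is a self-equivalence of $\cC$ restricting to the additive subcategories; hence $[FT]=\ibar{F}([T])\equiv[T]$ in the sense defined. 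This places $\ibar{F}([T])$ in the target set.

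For part (b) the plan is to use the group structure of $\Aut_\iota(\Clt)$ established after the Proposition, together with the composition formula in Proposition~\ref{Prp_ClustModGp}(b), to turn a free subgroup into an embedded binary tree. Let $a,b\in\Aut_\iota(\Clt)$ be free generators. The idea is to map each reduced word $w$ in $a,b$ (with, say, only nonnegative powers, giving the rooted binary tree on the free monoid on two generators) to the cluster tilting sequence $w([T])\in\Clt$, and hence to its underlying cluster tilting object, a vertex of the exchange graph $\mathcal{G}$. First I would check injectivity of $w\mapsto w([T])$: distinct group elements give distinct images by part (a), and since the free monoid on two generators injects into the free group, distinct words give distinct vertices. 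Next I would verify the $k$-embedding condition: if $w'=wa$ (or $wb$) is a child of $w$, then $w'([T])$ and $w([T])$ differ by applying a single generator, and by Proposition~\ref{Prp_ClustModGp}(b) this amounts to post-composing the mutation expression of $w([T])$ with a \emph{fixed} sequence $\sig\mu_\ii$ coming from $a$ (respectively $b$). Such a fixed expression realizes $w'([T])$ from $w([T])$ by a bounded number of mutations, namely the length $\abs{\ii}$ (up to the harmless permutation $\sig$, which does not change the underlying object). Taking $k$ to be the maximum of the two mutation-lengths for $a$ and $b$ yields paths of length at most $k$ in $\mathcal{G}$ between each parent and child.

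The main subtlety I anticipate is bookkeeping the permutations in the composition formula. Proposition~\ref{Prp_ClustModGp}(b) shows that composing induced equivalences conjugates the mutation index sequence by a permutation, $\mu_{\sig^{-1}(\jj)}$ rather than $\mu_\jj$. One must check that this relabeling does not inflate the \emph{length} of the mutation sequence connecting parent to child: conjugating an index sequence by a permutation preserves its length, so the number of mutation steps is still bounded by $\abs{\ii}$ for the relevant generator. I would make this explicit by observing that if $w([T])=\rho\,\mu_{\kk}([T])$ for some permutation $\rho$ and index sequence $\kk$, then applying the generator $a$ (with $a([T])=\sig\mu_\ii([T])$) gives $wa([T])=\rho\sig\,\mu_{\sig^{-1}(\kk)}\mu_\ii([T])$, which is obtained from $w([T])$ by the $\abs{\ii}$ mutations $\mu_\ii$ followed by a permutation; since permutations act trivially on the underlying cluster tilting object (they only reorder summands), the two vertices of $\mathcal{G}$ are joined by a path of length $\abs{\ii}\le k$. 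Having fixed the embedding and the bound $k$, the conclusion follows since the rooted binary tree is of exponential growth, so by the remark in Section~\ref{ssec:exp-gr} the existence of this $k$-embedding exhibits $\mathcal{G}$ as being of exponential growth as well.
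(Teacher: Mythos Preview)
Your approach mirrors the paper's one-line proof: part~(a) is Proposition~\ref{Prp_ClustModGp}(a), and part~(b) comes from~(a) together with~(b) of that proposition. Two points need correction, though.

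First, in part~(b) your application of Proposition~\ref{Prp_ClustModGp}(b) has the roles of $F$ and $G$ swapped. With $w([T])=\rho\mu_\kk([T])$ and $a([T])=\sig\mu_\ii([T])$, the expression $\rho\sig\,\mu_{\sig^{-1}(\kk)}\mu_\ii([T])$ is $(\ibar{a}\circ\ibar{w})([T])$, not $(\ibar{w}\circ\ibar{a})([T])$; and from that expression it does \emph{not} follow that one reaches it from $w([T])$ by only $\abs{\ii}$ mutations. For the tree embedding you must take $wa:=w\circ a$. Then either observe directly that $\ibar{w}$, as an automorphism of the groupoid $\Clt$, commutes with the generating morphisms, so that
\[
wa([T])=\ibar{w}\bigl(\sig\mu_\ii([T])\bigr)=\sig\mu_\ii\bigl(w([T])\bigr),
\]
or apply Proposition~\ref{Prp_ClustModGp}(b) with $F=w$, $G=a$ to get $wa([T])=\sig\rho\,\mu_{\rho^{-1}(\ii)}\mu_\kk([T])$. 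In either description the underlying object of $wa([T])$ is obtained from that of $w([T])$ by a mutation sequence of length $\abs{\ii}$, as you want.

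Second, your injectivity argument is incomplete: part~(a) gives injectivity into cluster tilting \emph{sequences}, whereas the vertices of the exchange graph are unordered cluster tilting \emph{objects}; two sequences differing by a permutation define the same vertex. To close this, note that the elements $g\in\Aut_\iota(\Clt)$ for which $g([T])$ is a permutation of $[T]$ form a subgroup which, by part~(a), injects into $\Sym_n$ and is therefore finite. Since free groups are torsion-free, this finite subgroup meets your free subgroup only in the identity, so distinct words in $a,b$ do yield distinct cluster tilting objects, and the map to the exchange graph is genuinely injective.
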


\begin{proof} 
Part (a) follows immediately from Proposition~\ref{Prp_ClustModGp}~(a) whereas 
part (b) follows from part~(a) and Proposition~\ref{Prp_ClustModGp}~(b).
\end{proof}

\begin{rem} 
It follows from~\cite{KelYan11}, that in case $\cC$ is the 
generalized cluster category associated to a non-degenerate, Jacobi-finite
quiver with potential, the above map is also surjective.
\end{rem}

\subsection{The tubular case}
Let $\cC$ be a tubular cluster category. Thus $\cC$ is the orbit category
of $\cD:=\cD^b(\coh\XX)$ modulo the self equivalence $\tau^{-1}[1]$, 
for a weighted projective line $\XX$ of tubular type. 
Here we denoted by $\tau$ the Auslander-Reiten automorphism and by $[1]$ the 
shift automorphism of $\cD$.
The canonical projection
$\pi\colon\cD\ra\cC$ is a triangle functor. In this situation, 
$\cC$ fulfills all the requirements of Section~\ref{ssec:Gp-Gen}. Moreover, all
cluster tilting sequences are reachable from any given cluster tilting object,
see \cite[Thm.~8.8]{BaKuLe10}.

It is shown in~\cite[Lemma~6.6]{BaKuLe10} that
all triangulated self-equivalences of $\cD$ are standard. Thus
the isomorphism classes of such self-equivalences form a group which 
can be identified with the derived Picard group $\Aut_s(\cD)$ 
of the corresponding canonical algebra~\cite{RouZim03}. Furthermore, 
it is shown 
in~\cite[Corollary~6.5]{BaKuLe10} that each triangulated 
self-equivalence of $\cC$ can be lifted along $\pi$ to a triangulated 
self-equivalence of 
$\cD$. In particular, the isomorphism classes of triangulated 
self-equivalences $\Aut(\cC)$ of $\cC$ form a factor group
of $\Aut_s(\cD)$.

\begin{prp} \label{Prp:TubIso}
For a tubular cluster category $\cC$ the map
\[
\Aut(\cC)\ra \Aut_\iota(\Clt), F\mapsto \ibar{F}
\]
is an isomorphism of groups.
\end{prp}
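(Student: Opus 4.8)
The plan is to prove that $F \mapsto \ibar{F}$ is a well-defined isomorphism by treating the three structural requirements in turn: that every $F \in \Aut(\cC)$ is reachable (so that $\ibar{F}$ makes sense and the map is defined), that the map is surjective, and that it is injective. First I would observe that reachability is almost automatic in this setting: by the cited result \cite[Thm.~8.8]{BaKuLe10}, all cluster tilting sequences in $\cC$ are reachable from any given cluster tilting object, and since a triangulated self-equivalence $F$ sends the fixed cluster tilting object $T$ to another cluster tilting object $FT$, the sequence $([FT_1],\ldots,[FT_n])$ automatically lies in $\Clt$. Thus every $F \in \Aut(\cC)$ is reachable, and by the discussion preceding Proposition~\ref{Prp_ClustModGp} it induces $\ibar{F} \in \Aut_\iota(\Clt)$. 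The fact that this assignment is a group homomorphism should follow directly from the composition formula in Proposition~\ref{Prp_ClustModGp}~(b), or more simply from the fact that lifting a composite self-equivalence to the induced action on $\Clt$ is functorial.

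For surjectivity, I would use the lifting property established just before the proposition: by \cite[Cor.~6.5]{BaKuLe10}, every triangulated self-equivalence of $\cC$ lifts along $\pi$ to one of $\cD$, and conversely by \cite[Lemma~6.6]{BaKuLe10} all self-equivalences of $\cD$ are standard, so $\Aut(\cC)$ is a genuine factor group of $\Aut_s(\cD)$. The point is that $\Aut(\cC)$ is large enough to realize every induced self-equivalence of $\Clt$. Here the key is Corollary~\ref{Cor:Aut}~(a): an element $\ibar{G} \in \Aut_\iota(\Clt)$ is determined by its value $\ibar{G}([T]) = [T']$, a cluster tilting sequence with $[T'] \equiv [T]$, meaning $\add(T) \simeq \add(T')$ via $T_i \mapsto T'_i$. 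I would need to produce a self-equivalence $F$ of $\cC$ realizing this equivalence of additive subcategories; the natural route is to lift the situation to $\cD$, extend the equivalence $\add(T) \to \add(T')$ to a derived equivalence using the tilting theory of the canonical algebra (the endomorphism ring of $T$), and then push down along $\pi$.

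For injectivity, the essential input is Proposition~\ref{Prp_ClustModGp}~(a) together with Corollary~\ref{Cor:Aut}~(a): if $\ibar{F} = \ibar{G}$ then $([FT_1],\ldots,[FT_n]) = ([GT_1],\ldots,[GT_n])$, so $F$ and $G$ agree on all summands of $T$ up to isomorphism. Since $T$ is a tilting object generating $\cD$ (and hence $\cC$), two standard self-equivalences agreeing on $T$ should agree up to isomorphism of functors; I would argue this using that a self-equivalence is determined by its restriction to a generating set together with the rigidity statement from \cite[Sec.~2]{DehKel08} invoked in the proof of Proposition~\ref{Prp_ClustModGp}~(a).

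The hard part will be surjectivity, specifically the passage from the combinatorial datum $[T'] \equiv [T]$ back to an honest self-equivalence of $\cC$. Matching isomorphism classes of summands and their indices is not the same as producing a functor, so the main obstacle is upgrading the additive equivalence $\add(T) \to \add(T')$ to a triangle self-equivalence. I expect this to hinge on the concrete representation theory of the tubular cluster category --- using that both $T$ and $T'$ lift to tilting complexes over $\cD = \cD^b(\coh \XX)$ with isomorphic endomorphism algebras (the canonical algebra), so that Rickard's theory yields a derived equivalence of $\cD$ commuting suitably with $\tau^{-1}[1]$, which then descends to the orbit category $\cC$. Verifying this compatibility with the orbit construction is where the real care is required.
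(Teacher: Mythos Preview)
You have inverted the difficulty: surjectivity is trivial by definition, and injectivity is where the real work lies.

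Recall that $\Aut_\iota(\Clt)$ is \emph{defined} (just after Proposition~\ref{Prp_ClustModGp}) as the group of induced self-equivalences of $\Clt$, i.e.\ those of the form $\ibar{F}$ for reachable $F$. Since you correctly observe that every $F\in\Aut(\cC)$ is reachable in the tubular case, the map $F\mapsto\ibar{F}$ is surjective by construction. There is no need for Rickard's theorem or any lifting argument; the entire ``hard part'' you describe for surjectivity is addressing a non-existent problem.

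Conversely, your injectivity argument has a genuine gap. From $\ibar{F}=\ibar{G}$ you correctly deduce $FT_i\cong GT_i$ for all $i$, but this is agreement on \emph{objects} only. The rigidity result from~\cite{DehKel08} tells you a rigid object is determined by its index; it says nothing about functors. Two self-equivalences can agree on every isomorphism class of objects yet differ by a nontrivial natural automorphism coming from $\Out(\End(T))$. The paper closes this gap as follows: lift $F$ (with $\ibar{F}=\Id$) along $\pi$ to a standard self-equivalence $\widetilde{F}$ of $\cD$, choose a tilting complex $M$ with $\pi(M_i)=T_i$ whose endomorphism algebra $E$ is \emph{schurian} (possible because $\XX$ is tubular), and observe that $\widetilde{F}$ is then determined by an element of $\Out(E)$ fixing the primitive idempotents. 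The schurian condition forces this outer automorphism to be trivial, hence $\widetilde{F}\cong\Id$ and $F\cong\Id$. The schurian hypothesis is the key ingredient you are missing.
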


\begin{proof} 
By the above observations and the definition, the map $F\mapsto \ibar{F}$ is  
a well-defined, surjective group homomorphism.

Let $F\in\Aut(\cC)$ be such that $\ibar{F}=\Id_\Clt$. As explained above,
there exists a standard self-equivalence $\widetilde{F}$ of $\cD$ which
lifts $F$ along $\pi$. Let $M=M_1\oplus\cdots\oplus M_n$ be a
tilting complex such that $\pi(M_i)=T_i$ for $i=1,\ldots,n$.
Since $\XX$ is tubular, we may assume that $E:=\End_\cD(M)$ is schurian,
\ie $\dim\cD(M_i,M_j)\leq 1$ for all $1\leq i,j\leq n$. By our hypothesis
we may also assume that $\widetilde{F}(M_i)\cong M_i$ for all $i=1,\ldots,n$.
Since $\widetilde{F}$ is standard, it is determined by an element
$\omega\in\Out(E)$ (the group of outer automorphisms of $E$) that fixes
the standard primitive idempotents of $E$, see~\cite[Prop.~2.3]{RouZim03}.
Since $E$ is schurian it follows
that $\omega$ is the identity. Thus $\widetilde{F}$ and $F$ are isomorphic
to the respective identities.
\end{proof}

\subsection{First proof of Theorem~\ref{thm:main}}
By Section~\ref{ssec:beg-pf} and
Corollary~\ref{Cor:Aut}~(b) it is sufficient to 
show that in the tubular case $\Aut_\iota(\Clt)$ contains a free subgroup in two
generators. In fact, by Proposition~\ref{Prp:TubIso} we have 
$\Aut_\iota(\Clt)\cong\Aut(\cC)$ and in~\cite[Prop.~7.4]{BaKuLe10} it is shown,
that $\Aut(\cC)$ is a semidirect product of a finite group by $\PSL_2(\ZZ)$, see
also~\cite[Thm.~6.3]{LenMel00}.\hfill$\Box$


\section{Explicit verification using Farey triples}
\label{sec:secondproof}
\sloppy
\subsection{Hübner mutations}
\label{subsec:Huebner}
In his Ph.D.~thesis~\cite{Hueb}, Hübner investigated tilting objects 
in the category $\coh\XX$ of coherent sheaves over a 
weighted projective line $\XX$.
In this section, we collect  the results from~\cite{Hueb} which are relevant
in our context.

Let $T=\bigoplus_{i=1}^n T_i$ be a tilting sheaf in  
$\coh \XX$. 
Let $Q_T$ be the quiver of the endomorphism algebra of $T$. 
For simplicity, we shall identify 
the vertices of $Q$ with the summands $T_1,\ldots,T_n$.
For each index $i=1,\ldots,n$ we define the morphism
$$
\sigma_i=
\left[\begin{matrix}\sigma_{i1}&\cdots&\sigma_{in} \end{matrix}\right]\colon 
\bigoplus_{h=1}^n  T_{h}^{r_{ih}}\rightarrow T_i
$$ 
where the entries of 
$\sigma_{ih}=\left[\begin{matrix}
\sigma_{ih}^{(1)}&\cdots&\sigma_{ih}^{(r_{ih})}
\end{matrix}\right]$ constitute a set of morphisms 
$\sigma_{ih}^{(a)}\colon T_h\rightarrow T_i$ which are mapped to a basis under 
the canonical projection
$$
\rad(T_h,T_i)\rightarrow\rad(T_h,T_i)/\rad^2(T_h,T_i).
$$
Note that $r_{ih}$ is the number of arrows $h\rightarrow i$ in $Q_T$.
Similarly we define a morphism 
$$
\rho_i=\left[\begin{matrix}\rho_{i1}\\\vdots\\ \rho_{in}\end{matrix}\right]
\colon T_i\rightarrow \bigoplus_{h=1}^n T_h^{r_{hi}},
$$
where  $\rho_{ih}=\sigma_{ih}^\top$.

\begin{prp}\cite[Prop.~2.6, 2.8]{Hueb}
\label{Prop:Huebner2.6-2.8}
With the previous notation, we have the following results.
\begin{itemize}
\item[(a)]
For each index $i$ the morphism
$\sigma_i$ (resp.~$\rho_i$) is either a monomorphism or an epimorphism
in $\coh\XX$.
\item[(b)]
For each index $i$ the morphism
$\sigma_i$ is mono (resp.~epi) if and only if $\rho_i$ is a mono (resp.~epi).
\item[(c)] Let $T_k^\ast=\Ker\sigma_k\oplus\Coker\rho_k$. Then 
$T_k^\ast\oplus\bigoplus_{j\neq k}T_j$ is again a tilting object in $\coh \XX$.
\end{itemize}
\end{prp}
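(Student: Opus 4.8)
The plan is to treat all three statements as instances of tilting-mutation theory in the hereditary abelian category $\coh\XX$, with the bridge being the endomorphism algebra $E=\End_{\coh\XX}(T)$ and the tilting functor $H=\operatorname{Hom}_{\coh\XX}(T,-)\colon\coh\XX\to\md E$. Recall that $\coh\XX$ is hereditary, so $\operatorname{Ext}^i=0$ for $i\geq 2$, and that $T$ tilting means $\operatorname{Ext}^1(T,T)=0$ together with generation of $\Der(\coh\XX)$; in particular $\operatorname{gldim}E\leq 2$, each $T_h$ is sent to the indecomposable projective $P_h$, and $H(\sigma_i)\colon\bigoplus_h P_h^{r_{ih}}\to P_i$ is, because the entries were chosen as a basis of $\rad/\rad^2$, a projective cover of $\rad P_i$. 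Thus on the module side $\sigma_i$ models the beginning of the minimal projective presentation of the simple top $S_i$, and in particular $\sigma_i$ is right-minimal. I will use this repeatedly to exclude accidental splittings and coincidences of summands.

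For part (a) I would form the four-term exact sequence
\[
0\to K_i\to B_i\xrightarrow{\sigma_i}T_i\to C_i\to 0,\qquad B_i=\bigoplus_h T_h^{r_{ih}},
\]
split it through the image $I_i=\operatorname{im}\sigma_i$ into $0\to K_i\to B_i\to I_i\to 0$ and $0\to I_i\to T_i\to C_i\to 0$, and apply $\operatorname{Hom}(-,T_h)$ and $\operatorname{Hom}(T_h,-)$. Feeding $\operatorname{Ext}^1(T,T)=0$ and heredity into the resulting long exact sequences constrains $K_i$ and $C_i$ to be orthogonal to $\add T$ in complementary directions. I would finish with a numerical argument in $\Groth(\coh\XX)$: using the rank and Euler forms one checks that nonzero $K_i$ and nonzero $C_i$ would move the class of $T_i$ incompatibly, so right-minimality of $\sigma_i$ forces $K_i=0$ or $C_i=0$. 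I expect this dichotomy to be \emph{the main obstacle}, the delicate point being to rule out the mixed situation without a circular appeal to (c).

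For (b) I would run the same analysis for $\rho_i$, which by $\rho_{ih}=\sigma_{ih}^{\top}$ is the left analogue assembled from the arrows out of $T_i$; part (a) then already gives that each of $\sigma_i,\rho_i$ is mono or epi. To synchronise the two, I would invoke Serre duality in $\coh\XX$, namely $D\operatorname{Ext}^1(X,Y)\cong\operatorname{Hom}(Y,\tau X)$, to relate the two approximation sequences at $T_i$, together with an Euler-form computation showing that the class of $T_i$ relative to its neighbours in $Q_T$ fixes a single sign. The mixed cases ($\sigma_i$ mono while $\rho_i$ epi, or conversely) then contradict rigidity, so $\sigma_i$ is mono exactly when $\rho_i$ is.

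Finally, for (c), parts (a) and (b) guarantee that exactly one of $\Ker\sigma_k,\Coker\rho_k$ is nonzero, so $T_k^\ast$ is a single indecomposable sheaf fitting into one of the exchange sequences $0\to\Ker\sigma_k\to B_k\to T_k\to 0$ or $0\to T_k\to B_k'\to\Coker\rho_k\to 0$, with $B_k'=\bigoplus_h T_h^{r_{hk}}$. I would then show that $T'=T_k^\ast\oplus\bigoplus_{j\neq k}T_j$ is tilting in three steps: first, rigidity $\operatorname{Ext}^1(T',T')=0$, obtained by applying $\operatorname{Hom}(\bar T,-)$, $\operatorname{Hom}(-,\bar T)$ and $\operatorname{Hom}(T_k^\ast,-)$ to the exchange sequence and using heredity with $\operatorname{Ext}^1(T,T)=0$, where $\bar T=\bigoplus_{j\neq k}T_j$; second, that $T_k^\ast$ is indecomposable and not isomorphic to any $T_j$, since a splitting or coincidence would violate the right-minimality of $\sigma_k$ (resp.\ left-minimality of $\rho_k$); and third, a counting argument, using that in this hereditary setting a rigid object with $n=\rk\Groth(\coh\XX)$ pairwise non-isomorphic indecomposable summands is automatically tilting. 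As $T'$ still has $n$ summands, this yields the claim.
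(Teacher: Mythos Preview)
The paper does not give its own proof of this proposition: it is quoted from H\"ubner's thesis and used as a black box throughout Section~\ref{sec:secondproof}. There is therefore no argument in the paper to compare your proposal against.

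That said, your outline is uneven. For (c) you are describing the standard exchange-sequence argument for tilting mutation in a hereditary category, and this is fine. For (b), once (a) is known, the synchronisation via Serre duality and the Euler form is reasonable. The genuine gap is in (a), and you already flag it: the sentence ``a numerical argument in $\Groth(\coh\XX)$ \ldots\ nonzero $K_i$ and nonzero $C_i$ would move the class of $T_i$ incompatibly, so right-minimality of $\sigma_i$ forces $K_i=0$ or $C_i=0$'' does not stand up. In $\Groth(\coh\XX)$ one only has the single relation $[T_i]=[B_i]-[K_i]+[C_i]$, which does not preclude both terms from being nonzero, and right-minimality of $\sigma_i$ only prevents summands of $B_i$ from lying in $K_i$; it says nothing about $K_i$ being zero.

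A route that actually closes the gap is to pass to the derived category. Since $\coh\XX$ is hereditary, the two-term complex $B_i\xrightarrow{\sigma_i}T_i$ splits in $\Der(\coh\XX)$ as $K_i[1]\oplus C_i$. Transporting through the tilting equivalence $\operatorname{RHom}(T,-)$ sends this to the two-term complex of projectives $H(B_i)\to P_i$, whose only cohomology is $H^{-1}=\Omega^2 S_i$ (projective, as $\operatorname{gldim}E\le 2$) and $H^0=S_i$. Matching cohomology forces either $\operatorname{RHom}(T,C_i)=0$, hence $C_i=0$, or else $\operatorname{Ext}^1(T,K_i)=0$ with $\operatorname{Hom}(T,K_i)$ projective, hence $K_i\in\add T$; in the latter case the short exact sequence $0\to K_i\to B_i\to I_i\to 0$ splits (using $\operatorname{Ext}^1(T,T)=0$ and $I_i\hookrightarrow T_i$), which \emph{now} contradicts right-minimality unless $K_i=0$. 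Without an argument of this kind, part~(a) of your proposal remains incomplete.
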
 

In view of Proposition~\ref{Prop:Huebner2.6-2.8}(b), exactly one of 
$\Ker\sigma_k$ and $\Coker\rho_k$ is 
non-zero. This 
allows us to separate the vertices of $Q_T$ into two classes: 
$T_k$ is called a \emph{Hübner-source} (resp.~a \emph{Hübner-sink}) 
in case $\sigma_k$ and $\rho_k$ are mono (resp.~epi).

\begin{rem}
\label{rem:warning}
The following warning seems in place, see
also \cite[Bem.~3.3]{Hueb}: it is possible to have 
two tilting sheaves $T=\bigoplus_{i=1}^n T_i$ and 
$T'=\bigoplus_{i=1}^n T'_i$ with isomorphic quivers $Q_T$ and $Q_{T'}$, 
such that some vertex $T_i$ is a Hübner-source of $Q_i$ whereas its 
corresponding vertex $T'_i$ is a Hübner-sink of $Q_{T'}$. 
\end{rem}

\begin{defn}
Let $T=\bigoplus_{i=1}^n T_i$ be a tilting sheaf in $\coh\XX$. Then for each 
index $k\in\{1,\ldots,n\}$ we define the \emph{mutation of $T$ in direction $k$}
to be the tilting sheaf
$$
\mu_k(T)=T_k^\ast\oplus\bigoplus_{j\neq k} T_j,
$$
where $T_k^\ast=\Ker\sigma_k\oplus\Coker\rho_k$.
\end{defn}

\begin{rem}
In \cite{Hueb}
the tilting sheaf $\mu_k(T)$ is called \emph{reflection} at the source 
or sink due to the similarity with the Bernstein-Gelfand-Ponomarev 
reflections~\cite{BeGePo}.
However, given the role they play as lifts of mutations in the context  of 
cluster algebras we prefer this new terminology.
\end{rem}

In view of Remark~\ref{rem:warning}, 
conditions which characterize Hübner-sinks 
and Hübner-sources are important.

\begin{prp}\cite[Bem.~2.10, Kor.~3.5]{Hueb}
\label{prop:Huebner-char}
A sink (resp.~source) of a quiver is always a Hübner-sink (resp.~Hübner-source).
A successor of a Hübner-sink is again a Hübner-sink and a predecessor of a 
Hübner-source is again a Hübner-source. Furthermore if the endomorphism 
algebra is given by its quiver and some relations, then any relation starts in 
a Hübner-source and ends in a Hübner-sink.
\end{prp}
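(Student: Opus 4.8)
The plan is to prove the four assertions of Proposition~\ref{prop:Huebner-char} essentially in the order stated, exploiting the dichotomy of Proposition~\ref{Prop:Huebner2.6-2.8}(a)--(b) together with functoriality of $\coh\XX(-, T_k)$ and $\coh\XX(T_k, -)$.

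\emph{First}, I would treat the base cases. Suppose $T_k$ is a sink of $Q_T$, so that there are no arrows $k \to h$ for any $h$, whence every $r_{kh}$ with $h \neq k$ vanishes; equivalently $\rho_k$ has trivial target in the relevant components and $\sigma_k \colon \bigoplus_h T_h^{r_{kh}} \to T_k$ is assembled from morphisms into $T_k$. The key observation is that a sink cannot admit any \emph{incoming} radical map from the other summands that would force $\sigma_k$ to be monic; instead one checks directly that $\sigma_k$ (equivalently $\rho_k$, by part~(b)) is epi, so $\Coker\rho_k \neq 0$ and $T_k$ is a H\"ubner-sink by definition. The source case is dual, replacing $\coh\XX(-, T_k)$ by $\coh\XX(T_k, -)$ and exchanging mono/epi throughout.

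\emph{Second}, for the propagation statements I would argue contrapositively. Let $T_\ell$ be a successor of a H\"ubner-sink $T_k$, so there is an arrow $k \to \ell$ in $Q_T$, and suppose for contradiction that $T_\ell$ is a H\"ubner-source, i.e.\ $\sigma_\ell$ is mono. The strategy is to compose the arrow $T_k \to T_\ell$ (a radical, non-factoring map witnessed in $\rad/\rad^2$) with the structure of $\sigma_\ell$ and $\rho_k$ and derive that $\rho_k$ would have to be mono as well, contradicting that $T_k$ is a H\"ubner-sink; here the $2$-Calabi--Yau/tilting rigidity and the fact that $\sigma_\ell$, $\rho_\ell$ govern the almost-split-like behaviour at $T_\ell$ are what make the incompatibility precise. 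The predecessor/source claim is again the formal dual.

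\emph{Third}, for the relation statement I would use the standard fact that in the quiver-with-relations presentation of $\End(T)$ a minimal relation is a linear combination of paths none of which individually factors through a shorter path, so its source vertex receives no relation-forcing incoming data and its target emits none. Concretely, if a relation started at a vertex that were a H\"ubner-sink rather than a source, then by part (a) the associated $\sigma$ at that vertex is epi, and epi-ness forces the outgoing maps to be jointly surjective in a way incompatible with them being constrained by a relation beginning there; dually at the terminal vertex.

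The main obstacle I expect is the \emph{second} step: the propagation of the H\"ubner-sink/source property along arrows is exactly the content that Remark~\ref{rem:warning} flags as subtle, since the quiver alone does not determine the sink/source type. Making the composition argument rigorous requires controlling how the mono/epi dichotomy of $\sigma_\ell$ interacts with a genuine arrow $T_k \to T_\ell$ in $\coh\XX$, and this is where one must invoke the finer homological structure of tilting sheaves over a weighted projective line from~\cite{Hueb} rather than purely formal nonsense. The base cases and the relation statement, by contrast, should follow fairly directly once the dichotomy and the $\rad/\rad^2$ description of the $\sigma_{ih}^{(a)}$ are in hand.
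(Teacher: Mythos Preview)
The paper does not supply its own proof of this proposition: it is quoted verbatim from H\"ubner's thesis (Bem.~2.10, Kor.~3.5 in~\cite{Hueb}) and left without argument. So there is nothing in the paper to compare your sketch against.

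On the substance of your sketch: the first claim is far simpler than you make it. If $T_k$ is a sink there are no arrows $k\to h$, so $\rho_k$ has \emph{target} $0$; a map to the zero object in an abelian category is automatically epi, and that is all you need. Your write-up mixes up which of $\sigma_k,\rho_k$ becomes trivial and invokes unnecessary considerations about ``incoming radical maps''. For the propagation claim your contrapositive idea is actually sound once written correctly, and it is not the obstacle you fear: the arrow $k\to\ell$ contributes an irreducible map $\alpha\colon T_k\to T_\ell$ which is simultaneously a component of $\sigma_\ell$ (on the domain side) and of $\rho_k$ (on the target side). If $\sigma_\ell$ were mono then its restriction to the summand $T_k$ is mono, so $\alpha$ is mono; but then $\rho_k$, having a mono component, is itself mono, contradicting that $T_k$ is a H\"ubner-sink. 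No ``finer homological structure'' is needed here.

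The genuine gap is in your treatment of the relation statement. Your argument that ``epi-ness forces the outgoing maps to be jointly surjective in a way incompatible with them being constrained by a relation'' is not an argument: a H\"ubner-sink $T_i$ has $\rho_i$ epi, i.e.\ the irreducible maps \emph{out of} $T_i$ are jointly epi, and there is no evident incompatibility between that and a relation among longer paths starting at $i$. H\"ubner's actual proof (Kor.~3.5 in~\cite{Hueb}) uses the interpretation of minimal relations via $\operatorname{Ext}^2$ between the simple $\End(T)$-modules and the projective resolution of those simples in terms of the exchange sequences for $\sigma_k,\rho_k$; this is what forces the source/sink pattern at the endpoints of a relation, and your sketch does not touch this mechanism.
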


The preceding conditions are not sufficient to decide always whether a given 
vertex is a Hübner-source or a Hübner-sink. To give a sufficient 
characterization we need some notions.

We recall that we denote by  $\cD=\cD^b(\coh \XX)$ the bounded derived category
of $\coh\XX$. 
Recall from \cite{GeLe87} that 
there are two $\ZZ$-linear forms, $\rk$ and $\deg$ on 
$\Groth(\coh\XX)=\Groth(\Der(\coh\XX))$,
called the \emph{rank} and \emph{degree}.
Furthermore the \emph{slope} is defined as $\slope=\frac{\deg}{\rk}$.
Note that each tilting sheaf $T$ in $\coh \XX$ gives rise to a triangulated 
equivalence between the bounded derived categories $\Der(\coh \XX)$ and 
$\Der(\md A)$ where $A=\End(T)$. Since such an equivalence induces an 
isomorphism between the corresponding Grothendieck groups,
we can evaluate rank and degree on (classes of)  $A$-modules.

\begin{prp}\cite[Bem.~3.3]{Hueb}
\label{prop:h-source-sink}
Let $T=\bigoplus_{i=1}^n T_i$ be a tilting sheaf in $\coh \XX$. Further denote 
by $S_i$ the simple right $\End(T)$-module associated to $T_i$.
Then $T_i$ is a Hübner-source if and only if $\rk(S_i)>0$ or $\rk(S_i)=0$ and 
$\deg(S_i)>0$. 
\end{prp}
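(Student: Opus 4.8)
The plan is to connect the mono/epi dichotomy of the morphism $\sigma_i$ (equivalently $\rho_i$, by Proposition~\ref{Prop:Huebner2.6-2.8}(b)) to the sign of rank and degree of the simple module $S_i$. The key observation is that the defining short exact sequence for $\sigma_i$ can be read off from the projective resolution of $S_i$ over $A=\End(T)$. Concretely, the morphisms $\sigma_i$ and $\rho_i$ assemble into the beginning of a minimal projective presentation of $S_i$: the middle term $\bigoplus_h T_h^{r_{ih}}$ is the radical of the indecomposable projective $P_i=\Hom(T,T_i)$, and the map to $P_i$ has cokernel $S_i$. Under the derived equivalence $\Der(\coh\XX)\simeq\Der(\md A)$ induced by $T$, the indecomposable summand $T_i$ corresponds to the projective $P_i$, so I would first translate everything into $\coh\XX$ via this equivalence.

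First I would set up the exact triangle attached to $\sigma_i$. Since $T_i$ is rigid and $T$ is a tilting sheaf, there is (up to the mono/epi case distinction) a short exact sequence in $\coh\XX$ relating $T_i$, the sum $\bigoplus_h T_h^{r_{ih}}$, and the ``simple'' object $\hat S_i$ which corresponds to $S_i$ under the equivalence. When $T_i$ is a Hübner-source, $\sigma_i$ is mono, giving $0\to\bigoplus_h T_h^{r_{ih}}\to T_i\to\Coker\sigma_i\to 0$, and dually when $T_i$ is a Hübner-sink, $\sigma_i$ is epi. The cokernel (resp.\ kernel) in these sequences is exactly the object representing $S_i$. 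The crux is to compute the class $[\hat S_i]\in\Groth(\coh\XX)$ and then apply the additive forms $\rk$ and $\slope$ (equivalently $\rk$ and $\deg$).

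Next I would apply additivity of $\rk$ and $\deg$ to the short exact sequence. The decisive input is the standard fact for weighted projective lines that a nonzero coherent sheaf $E$ satisfies either $\rk(E)>0$, or $\rk(E)=0$ and $\deg(E)>0$ (the vector bundle vs.\ torsion dichotomy, with torsion sheaves having positive degree); call a class with this property \emph{positive}. Both $T_i$ and each $T_h$ are sheaves, hence have positive class. In the Hübner-source case, $[\,\hat S_i\,]=[T_i]-\sum_h r_{ih}[T_h]$ while in the Hübner-sink case the sign is reversed. I would then argue that $\hat S_i$ being an honest sheaf (as a sub- or quotient of sheaves) forces its class to be positive, so that $\rk(S_i)>0$, or $\rk(S_i)=0$ and $\deg(S_i)>0$ holds precisely when $T_i$ is a Hübner-source, and the opposite strict inequalities hold for a Hübner-sink. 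The equivalence ``$T_i$ Hübner-source $\iff$ $\rk(S_i)>0$ or ($\rk(S_i)=0$ and $\deg(S_i)>0$)'' then follows, since the two cases are mutually exclusive and exhaustive by Proposition~\ref{Prop:Huebner2.6-2.8}(a),(b).

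The main obstacle I anticipate is bookkeeping the identification of $\hat S_i$ and the exactness of the relevant sequences: one must verify that $\Coker\sigma_i$ (resp.\ $\Ker\sigma_i$) really represents the simple module $S_i$ under the derived equivalence, rather than some shift or a more complicated complex, and that no higher terms of the projective resolution contribute to the class. This requires checking that $\sigma_i$ is a \emph{minimal} left/right map built from radical morphisms so that its (co)kernel is genuinely the correct simple, and controlling the case where $S_i$ might a priori be supported in several degrees. Handling the boundary case $\rk(S_i)=0$ carefully — ensuring that positivity of degree for torsion sheaves is applied with the correct strictness — is the subtle point, but it is exactly the content of the rank/degree positivity for nonzero sheaves over $\XX$, which I would invoke from~\cite{GeLe87}.
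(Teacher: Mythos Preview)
The paper does not give its own proof of this proposition; it is quoted from H\"ubner's thesis. So there is nothing to compare against, and I will simply assess your argument.

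Your strategy is sound and the Hübner-source half works: when $\sigma_i$ is a monomorphism, applying $\operatorname{RHom}(T,-)$ to the short exact sequence $0\to\bigoplus_h T_h^{r_{ih}}\to T_i\to\Coker\sigma_i\to 0$ and using that $T$ is tilting shows $\operatorname{RHom}(T,\Coker\sigma_i)\cong S_i$, so $\hat S_i=\Coker\sigma_i$ is a genuine sheaf and hence has positive class.

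The gap is in the Hübner-sink case. Your claim that ``the sign is reversed'' amounts to asserting $\hat S_i\cong(\Ker\sigma_i)[1]$ when $\sigma_i$ is an epimorphism, but this fails whenever the map $\bigoplus_h P_h^{r_{ih}}\to P_i$ is not injective, i.e.\ whenever $\operatorname{pd}_A S_i=2$. This already occurs for the sink vertex of any canonical algebra with $t\ge 3$ arms: there the cone of $\sigma_i$ in $\cD^b(\md A)$ is a two-term complex with $H^0=S_i$ and $H^{-1}=P_{\text{source}}^{t-2}\neq 0$, so it is not $S_i$. You anticipated this issue in your final paragraph but did not resolve it.

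A clean fix: since $\coh\XX$ is hereditary and $S_i$ is simple (hence indecomposable), $\hat S_i$ must be a single shifted sheaf $E[k]$, and because it lies in the tilted heart one has $k\in\{0,1\}$. It remains to exclude $k=0$ in the sink case. If $\hat S_i=E$ were a sheaf, then $\operatorname{Hom}(T_h,E)=0$ for all $h\neq i$ while there exists $0\neq f\colon T_i\to E$. But then $f\circ\sigma_i=0$ (each component factors through some $T_h$ with $h\neq i$), and since $\sigma_i$ is epi this forces $f=0$, a contradiction. Hence $\hat S_i=E[1]$ and $[\hat S_i]=-[E]$ is negative, completing the equivalence.
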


\begin{exmpl}\label{exmpl:can}
Let $(p_1,p_2,\ldots,p_t)$ be the weight sequence of $\XX$ and 
$p=\lcm(p_1,p_2,\ldots,p_t)$.
The \emph{canonical configuration} $\Tcan$, see \cite{GeLe87}, is a tilting 
sheaf whose endomorphism algebra is a canonical algebra in the sense of 
Ringel~\cite[Sec.~3.7]{Ri84}. The following picture shows its quiver. 
There are $t-2$ relations from the unique source to the unique sink of the 
quiver.
\begin{center}
\begin{picture}(265,100)
\put(5,50){%
\put(48,0){%
	\put(15,40){\vector(1,0){15}}
	\put(15,10){\vector(1,0){15}}
	\put(15,-40){\vector(1,0){15}}%
	}%
\put(106,40){%
	\put(6,0){\line(1,0){9}}
	\multiput(20.5,0)(3,0){3}{\line(1,0){1}}
	\put(33,0){\vector(1,0){11}}
}%
\put(106,10){%
	\put(6,0){\line(1,0){9}}
	\multiput(20.5,0)(3,0){3}{\line(1,0){1}}
	\put(33,0){\vector(1,0){11}}
}%
\put(106,-40){%
	\put(6,0){\line(1,0){9}}
	\multiput(20.5,0)(3,0){3}{\line(1,0){1}}
	\put(33,0){\vector(1,0){11}}
}%
\put(4.242640687,4.242640687){\vector(1,1){31.51471863}}
\put(5.820855001,1.45521375){\vector(4,1){28.35829}}
\put(4.242640687,-4.242640687){\vector(1,-1){31.51471863}}
\put(54,0){%
	 \put(154.2426407,35.75735931){\vector(1,-1){31.51471863}}
	 \put(155.820855,8.54478625){\vector(4,-1){28.35829}}
	 \put(154.2426407,-35.75735931){\vector(1,1){31.51471863}}
	}%
\put(-1,0){\HVCenter{$0$}}
\put(48,0){%
	\put(0,40){\HVCenter{$\frac{p}{p_1}$}}
	\put(0,10){\HVCenter{$\frac{p}{p_2}$}}
	\put(0,-40){\HVCenter{$\frac{p}{p_t}$}}
	}%
\put(95,0){%
	\put(0,40){\HVCenter{$\frac{2p}{p_1}$}}
	\put(0,10){\HVCenter{$\frac{2p}{p_2}$}}
	\put(0,-40){\HVCenter{$\frac{2p}{p_t}$}}
	}%
\put(180,0){%
	\put(0,40){\HVCenter{$\frac{(p_1-1)p}{p_1}$}}
	\put(0,10){\HVCenter{$\frac{(p_2-1)p}{p_2}$}}
	\put(0,-40){\HVCenter{$\frac{(p_t-1)p}{p_t}$}}
	}%
\put(48,-15){\HVCenter{$\vdots$}}%
\put(95,-15){\HVCenter{$\vdots$}}%
\put(180,-15){\HVCenter{$\vdots$}}%
\put(250,0){\HVCenter{\small $p$}}%
}%
\end{picture}
\end{center}
The indecomposable direct summands of $\Tcan$ 
have rank $1$ and degree $j\frac{p}{p_i}$ as shown in the picture above. 
\end{exmpl}

The next result, though interesting in its own, permits to calculate the rank 
in concrete examples.

\begin{prp}\cite[Thm.~4.6]{Hueb}
\label{prop:rank-additivity}
For each tilting sheaf $T$ the rank function is an additive function on the 
quiver $Q_T$ with relations. More precisely, if $T=\bigoplus_{i=1}^n T_i$ then 
for each indecomposable direct summand $T_i$ we have
$$
2\rk(T_i)=\sum_{j\rightar i} \rk(T_j)+\sum_{i\rightar j} \rk(T_j)
-\sum_{j\rightrel i} \rk(T_j) -\sum_{i\rightrel j} \rk(T_j),
$$
where the summation has to be taken over all arrows and relations ending 
in $i$. 
\end{prp}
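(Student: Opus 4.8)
The plan is to transport everything through the tilting equivalence into $\md A$, where $A=\End(T)$, and to compute the class of each simple module twice: once from a projective and once from an injective resolution. Writing $r_i:=\rk(T_i)$, under the derived equivalence $\Der(\coh\XX)\simeq\Der(\md A)$ the summand $T_i$ corresponds to the indecomposable projective $P_i$, so $\rk(P_i)=r_i$; let $S_i$ be the associated simple and $I_i$ the associated indecomposable injective. Since $T$ is a tilting object in the hereditary category $\coh\XX$, the algebra $A$ is quasi-tilted and therefore $\operatorname{gldim}A\le 2$. Reading a quiver presentation of $A$ under this bound, the minimal projective resolution of $S_i$ has length at most two, with the first syzygy governed by the arrows out of $i$ and the second by the relations starting at $i$:
\[
0 \to \bigoplus_{i\rightrel j} P_j \to \bigoplus_{i\rightar j} P_j \to P_i \to S_i \to 0.
\]
As $\rk$ is additive on short exact sequences, evaluating it on the alternating sum of this resolution yields the first identity
\[
\rk(S_i) = r_i - \sum_{i\rightar j} r_j + \sum_{i\rightrel j} r_j .
\]

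Dually, the minimal injective resolution of $S_i$ reads
\[
0 \to S_i \to I_i \to \bigoplus_{j\rightar i} I_j \to \bigoplus_{j\rightrel i} I_j \to 0,
\]
with the first cosyzygy governed by the arrows into $i$ and the second by the relations ending at $i$. The one extra input needed is the value of $\rk$ on injectives, and this is where Serre duality enters. The Serre functor of the hereditary category $\coh\XX$ is the twist by the dualizing element followed by the shift $[1]$; the twist preserves rank while $[1]$ acts as multiplication by $-1$ on $\Groth$, so the Serre functor negates $\rk$. Because it is intrinsic to the triangulated category it is preserved by the equivalence, and on the module side it carries $[P_i]$ to $[I_i]$; hence $\rk(I_i)=-r_i$. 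Substituting this into the injective resolution gives the second identity
\[
\rk(S_i) = -r_i + \sum_{j\rightar i} r_j - \sum_{j\rightrel i} r_j .
\]

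Subtracting the first identity from the second eliminates $\rk(S_i)$ and produces exactly
\[
2 r_i = \sum_{j\rightar i} r_j + \sum_{i\rightar j} r_j - \sum_{j\rightrel i} r_j - \sum_{i\rightrel j} r_j,
\]
which is the asserted additivity. The conceptual crux of the argument is the identity $\rk(I_i)=-\rk(P_i)$: the symmetric appearance of incoming and outgoing arrows, and of relations at both ends, is forced precisely by averaging the projective and injective resolutions, and the relative sign that makes the two combine correctly is supplied by the shift in the Serre functor. I expect this sign bookkeeping, together with the justification that $\operatorname{gldim}A\le 2$ (so that the resolutions terminate in degree two and only arrows and relations contribute), to be the main points requiring care; the shape of the two resolutions is then the standard reading of a quiver with relations.
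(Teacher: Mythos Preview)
The paper does not supply its own proof of this proposition: it is quoted verbatim from H\"ubner's thesis \cite[Thm.~4.6]{Hueb} and used as a black box. So there is no ``paper's proof'' to compare against.

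Your argument is correct. The two ingredients you isolate are exactly the right ones: (i) since $T$ is tilting in the hereditary category $\coh\XX$, the endomorphism algebra has global dimension at most~$2$, so the minimal projective and injective resolutions of $S_i$ terminate in degree~$2$ and are governed by arrows and (a minimal set of) relations; (ii) the Serre functor on $\Der(\coh\XX)$ is $(-)\otimes\omega_\XX[1]$, which negates rank because the line-bundle twist preserves rank and the shift changes sign, and under the derived equivalence this becomes the Nakayama functor sending $P_i$ to $I_i$, whence $\rk(I_i)=-\rk(P_i)$. Equating the two expressions for $\rk(S_i)$ then yields the symmetric formula. Note that whichever arrow convention you adopt (arrows into~$i$ versus out of~$i$ in the first syzygy), the end formula is unchanged by symmetry, so the possible mismatch with the paper's convention is harmless. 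This is, in fact, essentially how H\"ubner proves it as well: the proposition in his thesis is a consequence of his analysis of $[S_i]$ in $K_0$ via both resolutions together with the behaviour of the Coxeter transformation (equivalently, the Serre functor) on rank.
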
 

The thesis of Hübner \cite{Hueb} contains also a precise description of 
the effect on the endomorphism algebra of tilting sheaves under mutation, in 
terms of arrows and relations.
Let $T=\bigoplus_{i=1}^n T_i$ be a tilting sheaf for $\coh \XX$ and let 
$\mu_k(T)=\bigoplus_{i\neq k} T_i\oplus T'_k$ be the mutation in direction $k$. 
We state here only the version for the Hübner-source --- the one for a 
Hübner-sink is completely dual and therefore left to the interested reader.

\begin{prp}\cite[Kor.~4.16]{Hueb}
If $T_i$ is a Hübner-source, then the quiver with relations $Q'$ for 
$\mu_k(T)$ is obtained from $Q$ as follows. 
\begin{itemize}
\item[(i)]
The quiver $Q'$ has the same vertices as $Q$.
\item[(ii)]
For each pair of arrows $i\rightar k\rightar j$ an arrow $i\rightar j$ is added.
\item[(iii)]
For each pair of an arrow $k\rightar i$ and a relation $k\rightrel j$ a 
relation $i\rightrel j$ is added.
\item[(iv)]
Each arrow $i\rightar k$ is replaced by a relation $i\rightrel k$.
\item[(v)]
Each arrow $k\rightar i$ is replaced by an arrow $i\rightar k$.
\item[(vi)]
Each relation $k\rightrel i$ is replaced by an arrow $k\rightar i$.
\item[(vii)]
Pairs of parallel relations and arrows are successively canceled.
\item[(viii)]
All remaining arrows and relations remain unchanged.
\end{itemize}
\end{prp}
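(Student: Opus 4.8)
The plan is to read off the quiver with relations of $A'=\End(\mu_k(T))$ directly inside the hereditary category $\coh\XX$, exploiting the hypothesis that the mutated summand $T_k$ is a Hübner-source. Throughout write $\operatorname{Hom}$ and $\operatorname{Ext}^i$ for morphisms and extensions in $\coh\XX$. Being a Hübner-source forces $\Ker\sigma_k=0$, so Proposition~\ref{Prop:Huebner2.6-2.8}(c) gives $T_k^\ast=\Coker\rho_k$ fitting into a short exact \emph{exchange sequence}
\[
0\to T_k\xrightarrow{\ \rho_k\ } E \xrightarrow{\ \pi\ } T_k^\ast\to 0,
\qquad E=\bigoplus_{h}T_h^{\,r_{hk}},
\]
where $E$ has no $T_k$-summand (a tilting sheaf has no loops), so $E\in\add\bigl(\bigoplus_{j\neq k}T_j\bigr)$. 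The summands of $E$ are exactly the targets of the arrows $k\to h$ of $Q$, and on each of them $\pi$ restricts to a morphism $T_h\to T_k^\ast$; this will be the origin of the reversed arrows.

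First I would pin down all morphism spaces between $T_k^\ast$ and a fixed summand $T_j$ ($j\neq k$). Applying $\operatorname{Hom}(T_j,-)$ and $\operatorname{Hom}(-,T_j)$ to the exchange sequence and using that $\coh\XX$ is hereditary together with the vanishing of self-extensions of the two tilting sheaves $T$ and $\mu_k(T)$ — so that $\operatorname{Ext}^1(T_j,T_k)$, $\operatorname{Ext}^1(T_j,E)$, $\operatorname{Ext}^1(T_k^\ast,T_j)$ and $\operatorname{Ext}^1(E,T_j)$ all vanish — both long exact sequences collapse to
\[
0\to \operatorname{Hom}(T_j,T_k)\to \operatorname{Hom}(T_j,E)\to \operatorname{Hom}(T_j,T_k^\ast)\to 0
\]
\[
0\to \operatorname{Hom}(T_k^\ast,T_j)\to \operatorname{Hom}(E,T_j)\to \operatorname{Hom}(T_k,T_j)\to 0 .
\]
The first identifies maps into $T_k^\ast$ with maps $T_j\to E$ modulo those factoring through $\rho_k$; the second identifies maps out of $T_k^\ast$ with maps $E\to T_j$ that vanish on $\rho_k$. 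Passing to $\rad/\rad^2$ then yields the arrows and relations incident to the vertex $k$. From the first sequence: a predecessor's map $T_j\to T_k$ (an arrow $j\to k$) is carried by $\rho_k$ into the image of $\operatorname{Hom}(T_j,T_k)$, hence becomes the zero path $T_j\to E\to T_k^\ast$ — the relation $j\rightrel k$ of rule~(iv); a successor $T_h$ gives the irreducible map $\pi|_{T_h}\colon T_h\to T_k^\ast$ — the reversed arrow of rule~(v). From the second sequence: a relation $k\rightrel i$, which by Proposition~\ref{prop:Huebner-char} indeed emanates from the Hübner-source $k$, corresponds to a map $E\to T_i$ killed by $\rho_k$, i.e.\ to an element of $\operatorname{Hom}(T_k^\ast,T_i)$ — the new arrow $k\to i$ of rule~(vi). (No relation ends at $k$, since relations end only at Hübner-sinks, consistent with the absence of a rule for relations into $k$.)

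The arrows and relations \emph{not} touching $k$ change only because $T_k$ is no longer an available summand. A composite $T_i\to T_k\to T_j$ lies in $\rad^2$ of $A$, but once $T_k$ leaves the set of summands it need no longer factor through any remaining $T_\ell$, and then represents a genuine class in $\rad(T_i,T_j)/\rad^2$: this produces the new arrows $i\to j$ of rule~(ii), the verification being that such composites are exactly those not already factoring through some other summand. Dually, pairing an arrow $k\to i$ (reversed to $i\to k$) with a relation $k\rightrel j$ (reversed to an arrow $k\to j$) transports the relation to a relation $i\rightrel j$, which is rule~(iii).

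To handle relations rigorously I would use that heredity of $\coh\XX$ gives $\operatorname{gldim}A,\operatorname{gldim}A'\le 2$, so that the number of relations $a\to b$ equals $\dim\operatorname{Ext}^2_{A'}(S_a,S_b)$; these groups can be computed from the two-term complexes obtained by transporting the minimal projective presentations of the simple modules through the exchange sequence. The genuinely delicate step, and the one I expect to be the main obstacle, is rule~(vii): proving that the resulting list is \emph{minimal}. Concretely, whenever rules~(ii)--(vi) produce both an arrow and a relation between the same ordered pair of vertices, one must show that the corresponding class in $\rad/\rad^2$ and the corresponding $\operatorname{Ext}^2$-class annihilate one another, so that neither survives in the minimal quiver with relations of $A'$. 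Establishing this forced cancellation — as opposed to the mere dimension bookkeeping, which the two Hom-sequences above already supply — is where the real work lies.
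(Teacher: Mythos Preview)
The paper does not prove this proposition: it is quoted verbatim from H\"ubner's thesis (the citation \cite[Kor.~4.16]{Hueb} is attached to the statement itself), and no argument is supplied here. So there is no ``paper's own proof'' to compare against; the authors use the result as a black box in their explicit calculations.

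As to your proposal itself: the two short exact $\operatorname{Hom}$-sequences are correctly derived and do control $\operatorname{Hom}(T_j,T_k^\ast)$ and $\operatorname{Hom}(T_k^\ast,T_j)$, and the translation of arrows and relations into $\dim\operatorname{Ext}^1_{A'}(S_a,S_b)$ and $\dim\operatorname{Ext}^2_{A'}(S_a,S_b)$ is the right framework. But the argument as written does not close: you correctly flag rule~(vii) as the crux and then do not carry it out, and in fact several earlier steps are also only heuristic. For instance, the passage ``a predecessor's map $T_j\to T_k$ \ldots\ becomes the zero path $T_j\to E\to T_k^\ast$ --- the relation $j\rightrel k$'' conflates ``a certain composite vanishes'' with ``there is a minimal relation'', which are not the same thing; to make this rigorous you would need to compute $\operatorname{Ext}^2_{A'}(S_j,S_k)$, e.g.\ by building the minimal $A'$-projective resolution of $S_k$ from the exchange triangle. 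Similarly, rule~(ii) requires showing that a composite $T_i\to T_k\to T_j$ that was in $\rad^2(A)$ now represents a nonzero class in $\rad(A')/\rad^2(A')$, which is not automatic (it could still factor through some other $T_\ell$) and is in fact entangled with the cancellation of rule~(vii). So what you have is a correct outline of the bookkeeping, with the substantive verification --- minimality, i.e.\ that the prescribed arrows and relations form a \emph{minimal} presentation of $A'$ --- still to be done.
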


\begin{rem}
\label{rem:rel}
If each relation $i\rightrel j$ is replaced by an arrow $j\rightar i$ then the 
corresponding mutation rule is precisely the mutation of diagrams as formulated
in \cite{FZ03}. Further we note that this definition is compatible with the 
mutation of $\ZZ_2$-graded quivers, introduced by Amiot and Oppermann 
in~\cite[Def. 6.2]{AmOp}.
\end{rem}

\begin{exmpl}
\label{exmpl:2222}
We consider the weight sequence $(2,2,2,2)$. Let $\Tcan$ be the canonical 
configuration, see Example~\ref{exmpl:can}. We label the vertices of the 
quiver of $\End(\Tcan)$ by $1,2,\ldots,6$ such that $1$ is the source and $6$ 
is the sink. We consider the mutation sequence $\mu_6\mu_3\mu_2$, 
and indicate the slope $\frac{\deg(T_i)}{\rk(T_i)}$ in the quivers:
\begin{center}
\begin{picture}(350,70)
\put(0,35){
\put(0,0){
	\put(0,2){
		\put(0,0){\HVCenter{\small$\frac{0}{1}$}}
		\multiput(20,-30)(0,20){4}{\HVCenter{\small$\frac{1}{1}$}}
		\put(40,0){\HVCenter{\small$\frac{2}{1}$}}
	}
	\put(5,2.5){\vector(2,1){10}}
	\put(5,-2.5){\vector(2,-1){10}}
	\put(5,7.5){\vector(2,3){10}}
	\put(5,-7.5){\vector(2,-3){10}}
	\put(25,7.5){\vector(2,-1){10}}
	\put(25,-7.5){\vector(2,1){10}}
	\put(25,22.5){\vector(2,-3){10}}
	\put(25,-22.5){\vector(2,3){10}}
	
	\multiput(0,-1)(0,2){2}{
		\qbezier[15](7,0)(20,0)(33,0)
	}
}
\put(65,0){
	\put(-10,0){\vector(1,0){20}}
	\put(-10,-3){\line(0,1){6}}
	\put(0,4){\HBCenter{$\mu_2$}}
}
\put(90,0){
	\put(0,2){
		\put(0,0){\HVCenter{\small$\frac{0}{1}$}}
		\multiput(20,-30)(0,20){3}{\HVCenter{\small$\frac{1}{1}$}}
		\put(40,0){\HVCenter{\small$\frac{2}{1}$}}
		\put(60,10){\HVCenter{\small$\frac{1}{0}$}}
	}
	\put(45,2.5){\vector(2,1){10}}
	\put(5,2.5){\vector(2,1){10}}
	\put(5,-2.5){\vector(2,-1){10}}
	\put(5,-7.5){\vector(2,-3){10}}
	\put(25,7.5){\vector(2,-1){10}}
	\put(25,-7.5){\vector(2,1){10}}
	\put(25,-22.5){\vector(2,3){10}}
	\qbezier[15](7,0)(20,0)(33,0)
	\qbezier[28](7,1)(30,2)(55,12)
}
\put(175,0){
	\put(-10,0){\vector(1,0){20}}
	\put(-10,-3){\line(0,1){6}}
	\put(0,4){\HBCenter{$\mu_3$}}
}
\put(200,0){
	\put(0,2){
		\put(0,0){\HVCenter{\small$\frac{0}{1}$}}
		\multiput(20,-10)(0,20){2}{\HVCenter{\small$\frac{1}{1}$}}
		\put(40,0){\HVCenter{$\frac{2}{1}$}}
		\multiput(60,-10)(0,20){2}{\HVCenter{\small$\frac{1}{0}$}}
	}
	\put(45,2.5){\vector(2,1){10}}
	\put(45,-2.5){\vector(2,-1){10}}
	\put(5,2.5){\vector(2,1){10}}
	\put(5,-2.5){\vector(2,-1){10}}
	\put(25,7.5){\vector(2,-1){10}}
	\put(25,-7.5){\vector(2,1){10}}
	\qbezier[28](7,1)(30,2)(55,12)
	\qbezier[28](7,-1)(30,-2)(55,-12)
}
\put(285,0){
	\put(-10,0){\vector(1,0){20}}
	\put(-10,-3){\line(0,1){6}}
	\put(0,4){\HBCenter{$\mu_6$}}
}
\put(310,0){
	\put(0,2){
		\multiput(0,-10)(0,20){2}{\HVCenter{\small$\frac{0}{1}$}}
		\multiput(20,-10)(0,20){2}{\HVCenter{\small$\frac{1}{1}$}}
		\multiput(40,-10)(0,20){2}{\HVCenter{\small$\frac{1}{0}$}}
	}
	\multiput(0,0)(20,0){2}{
		\put(4,6){\vector(1,-1){12}}
		\put(4,-6){\vector(1,1){12}}
		\multiput(5,-10)(0,20){2}{\vector(1,0){10}}
	}
	\qbezier[22](7,7)(20,-3)(33,7)
	\qbezier[11](7,6)(14,1)(20,0)\qbezier[11](33,-6)(26,-1)(20,0)
	\qbezier[22](7,-7)(20,3)(33,-7)
	\qbezier[11](7,-6)(14,-1)(20,0)\qbezier[11](33,6)(26,1)(20,0)
}
}
\end{picture}
\end{center}   
\end{exmpl}
  
\subsection{Farey triples}
We resume some basic properties of Farey triples, 
see also~\cite[Sec.~2]{Na11}.
First, we extend $\QQ$ to $\QQi=\QQ\cup\{\infty\}$ and observe that each 
element $q\in\QQ$ defines uniquely two integers $d(q)$ and $r(q)$ which are 
relatively prime and such that
$$
q=\frac{d(q)}{r(q)},\quad r(q)> 0.
$$
Furthermore, we define $d(\infty)=1$ and $r(\infty)=0$.

\begin{defn}
For a pair $p,q\in \QQi$ the \emph{Farey distance} is defined as
$$
\Delta(p,q)=\left| d(p)r(q)-d(q)r(p)\right|.
$$
If $\Delta(p,q)=1$, then $p,q$ are called \emph{Farey neighbours}.
A triple $\{q_1,q_2,q_3\}$ of elements of $\QQi$ which are pairwise Farey 
neighbours is called a \emph{Farey triple}.

Given $p,q\in\QQi$ the \emph{Farey sum} $\oplus$ and \emph{Farey difference} 
$\ominus$ are defined by
$$
p\oplus q=\frac{d(p)+d(q)}{r(p)+r(q)},\quad
p\ominus q=\frac{d(p)-d(q)}{r(p)-r(q)}.
$$
If $\oq=\{p,q,r\}$ is a Farey triple then the \emph{mutation $\mu_p(\oq)$ in 
direction} $p$ is defined by
$$
\mu_p(\oq)=\begin{cases}
\{q\ominus r,q,r\},&\quad\text{if $q<p<r$ or $r<p<q$,}\\
\{q\oplus r,q,r\},&\quad \text{if $p<\min(q,r)$ or $p>\max(q,r)$}.
\end{cases}
$$

\end{defn}

\begin{lem}
\label{lem:farey_mutation}
To any two Farey neighbours there exist exactly two Farey triples containing 
them.
If $\oq$ is a Farey triple then $\mu_p(\oq)$ is again a Farey triple for any 
$p\in\oq$. Moreover, the mutation of Farey triples is involutive, in the sense 
that $\mu_{p'}\mu_p(\oq)=\oq$ if $\oq=\{p,q,r\}$ and $\mu_p(\oq)=\{p',q,r\}$.
\end{lem}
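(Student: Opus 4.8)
The plan is to translate the whole statement into linear algebra over $\ZZ^2$. To each $q\in\QQi$ I attach the primitive integer vector $v(q)=(d(q),r(q))$, so that points of $\QQi$ correspond bijectively to primitive vectors of $\ZZ^2$ taken up to sign (the normalization $r(q)\geq 0$, with $v(\infty)=(1,0)$, picks a representative). In this dictionary $\Delta(p,q)=\abs{\det(v(p),v(q))}$, so $p,q$ are Farey neighbours exactly when $\{v(p),v(q)\}$ is a $\ZZ$-basis of $\ZZ^2$, and moreover $v(p\oplus q)=v(p)+v(q)$ while $v(p\ominus q)=\pm(v(p)-v(q))$.

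For the first assertion I would fix Farey neighbours $q,r$, so that $\{v(q),v(r)\}$ is a basis, and look for a third point $s$ with $\{q,r,s\}$ a Farey triple. Writing $v(s)=\alpha v(q)+\beta v(r)$ with $\alpha,\beta\in\ZZ$ and using $\det(v(q),v(r))=\pm1$, the two neighbour conditions $\abs{\det(v(q),v(s))}=\abs{\det(v(r),v(s))}=1$ become simply $\abs{\beta}=\abs{\alpha}=1$. Hence $v(s)\in\{\pm(v(q)+v(r)),\ \pm(v(q)-v(r))\}$, which up to sign are exactly the two vectors $v(q)+v(r)$ and $v(q)-v(r)$. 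Both are primitive (a common prime factor would divide $\det(v(q),v(r))=\pm1$) and define two distinct points $q\oplus r$ and $q\ominus r$; this yields precisely the two Farey triples $\{q,r,q\oplus r\}$ and $\{q,r,q\ominus r\}$.

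The crux of the remaining two assertions is a \emph{position lemma}: ordering $\QQi$ linearly by declaring $\infty$ to be the maximum, the mediant $q\oplus r$ lies strictly between $q$ and $r$, whereas $q\ominus r$ lies strictly outside the interval $[\min(q,r),\max(q,r)]$. I would prove this by direct computation; for the mediant,
\[
(q\oplus r)-q=\frac{-(d(q)r(r)-r(q)d(r))}{r(q)(r(q)+r(r))},\qquad (q\oplus r)-r=\frac{d(q)r(r)-r(q)d(r)}{r(r)(r(q)+r(r))},
\]
so the two differences have opposite signs and $q\oplus r$ is caught between $q$ and $r$; the analogous computation for $q\ominus r$ produces two differences of the \emph{same} sign, placing it on one side of both $q$ and $r$. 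The hard part is the bookkeeping around $\infty$: one must separately treat the degenerate cases $r(q)=0$ or $r(r)=0$ (a point at $\infty$) and $r(q)=r(r)$ (which forces $q\ominus r=\infty$), and check that in each the strict inside/outside dichotomy persists once $\infty$ is the order-maximum. This is the only place where genuine care is needed.

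Granting the position lemma, the last two assertions are formal. In a Farey triple $\{p,q,r\}$ the point $p$ is a common neighbour of $q$ and $r$, hence equals $q\oplus r$ or $q\ominus r$; by the position lemma $p=q\oplus r$ precisely when $p$ lies strictly between $q,r$ and $p=q\ominus r$ precisely when $p$ lies strictly outside. Since $p\notin\{q,r\}$ these alternatives are mutually exclusive and exhaustive, and they are exactly the two branches defining $\mu_p$; therefore $\mu_p$ always swaps $p$ for the other common neighbour of $q,r$, so $\mu_p(\oq)$ is the second triple produced in the first assertion and is in particular a Farey triple. Finally, writing $\mu_p(\oq)=\{p',q,r\}$ with $\{p,p'\}=\{q\oplus r,q\ominus r\}$, the point $p'$ occupies the opposite inside/outside position to $p$, so $\mu_{p'}$ falls into the opposite branch and swaps $p'$ back for $p$, giving $\mu_{p'}\mu_p(\oq)=\{p,q,r\}=\oq$.
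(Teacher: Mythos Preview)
Your proof is correct and follows essentially the same route as the paper. Both solve for the third member of a Farey triple by a linear system over $\ZZ$: the paper writes $af-be=\varepsilon$, $cf-de=\varphi$ and eliminates to get $(e,f)=\varepsilon(a,b)-\varphi(c,d)$, which is exactly your computation $v(s)=\alpha v(q)+\beta v(r)$ with $\abs{\alpha}=\abs{\beta}=1$ in $\ZZ^2$-basis language. The paper then declares the remaining two assertions ``straightforward and left to the interested reader'', whereas you actually supply them via the position lemma and the swap argument; this is additional detail rather than a different idea.
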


\begin{proof}
Let $p=\frac{a}{b}$ and $q=\frac{c}{d}$ be Farey neighbours. We may assume that
$p>q$, that is $ad-bc=1$. Now suppose that 
$\{\frac{a}{b},\frac{c}{d},\frac{e}{f}\}$ is a Farey triple. Then
\begin{align}
\label{eq:farey1}
af-be&=\varepsilon\in\{1,-1\}\\
\label{eq:farey2}
cf-de&=\varphi\in\{1,-1\}
\end{align}
By multiplying \eqref{eq:farey2} by $a$ and using \eqref{eq:farey1} we get
$e=\varepsilon a-\varphi c$. Similarly we get $f=\varepsilon b-\varphi d$.
 
Therefore, the four possible choices for the signs 
$\varepsilon, \varphi\in\{1,-1\}$ lead to precisely two possible solutions for 
$\frac{e}{f}$. This proves the first statement and the rest of the proof of the
following result is straightforward and left to the interested reader.
\end{proof}

\begin{rem}
Similarly as in \cite{BarGei12} we define the \emph{complexity} $c(q)$ of 
$q\in\QQi$ to be $\left| d(q)\right|+r(q)+\left|d(q)-r(q)\right|$. It follows  
easily that $\{1,0,\infty\}$ is the unique Farey-triple of minimal sum of the 
complexities, and that each other Farey triple can be mutated in a 
unique direction 
so that its sum of complexities decreases. 
Consequently, the exchange graph of the Farey triples form a $3$-regular tree 
under  mutations.
\end{rem}

\begin{lem}
\label{lem:Farey_triple_eq}
If $\{p,q,r\}$ is a Farey triple with 
$p<q<r$ then
$$
2q\ominus p=q\oplus r,\quad
2q\ominus r=q\oplus p,\quad
2p\ominus q=2r\ominus q=p\ominus r,
$$
where $2\frac{a}{b}\ominus\frac{c}{d}=\frac{2a-c}{2b-d}$.
\end{lem}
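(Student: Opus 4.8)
The plan is to reduce all four asserted equalities to the vanishing of $2\times 2$ determinants and then read them off from the Farey-neighbour relations. First I would fix coordinates by writing $p=\tfrac{d(p)}{r(p)}$, $q=\tfrac{d(q)}{r(q)}$, $r=\tfrac{d(r)}{r(r)}$ in their canonical reduced form. The ordering $p<q<r$ together with the hypothesis that the three pairs are pairwise Farey neighbours pins down not merely the absolute values but also the signs of the relevant determinants: since the denominators $r(\cdot)$ are nonnegative, the inequality $\tfrac{d(p)}{r(p)}<\tfrac{d(q)}{r(q)}$ forces $d(p)r(q)-d(q)r(p)<0$, and as $\Delta(p,q)=1$ this determinant equals $-1$. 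Arguing the same way for the other two pairs gives
\[
d(p)r(q)-d(q)r(p)=d(q)r(r)-d(r)r(q)=d(p)r(r)-d(r)r(p)=-1 .
\]
The only point requiring a little care is the value $\infty=\tfrac{1}{0}$: in a Farey triple containing $\infty$ the other two entries are consecutive integers, so under $p<q<r$ the symbol $\infty$ can only occur as the top element $r$; there the corresponding denominator vanishes, but $x<\infty$ and $\Delta(x,\infty)=r(x)=1$ still yield the same sign, so the three displayed equations hold uniformly.

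The key observation for the second step is that every equality in the statement is an identity in $\QQi=\mathbb{P}^1(\QQ)$, and two formal fractions $\tfrac{x_1}{x_2}$ and $\tfrac{y_1}{y_2}$ represent the same element of $\QQi$ precisely when $x_1y_2-x_2y_1=0$. This projective viewpoint is convenient because it is insensitive to common factors, so I need not check that the fractions produced by $\oplus$ and $\ominus$ are in reduced form, and it handles the possibility that some value equals $\infty$ uniformly. Applying the definitions of $\oplus$ and $\ominus$, the identity $2q\ominus p=q\oplus r$ becomes the vanishing of
\[
\bigl(2d(q)-d(p)\bigr)\bigl(r(q)+r(r)\bigr)-\bigl(2r(q)-r(p)\bigr)\bigl(d(q)+d(r)\bigr),
\]
and expanding regroups this into $2\bigl(d(q)r(r)-d(r)r(q)\bigr)-\bigl(d(p)r(q)-d(q)r(p)\bigr)-\bigl(d(p)r(r)-d(r)r(p)\bigr)$. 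The remaining three identities reduce in exactly the same way, each collapsing to an expression of the form $\pm\bigl(2\det{}_1-\det{}_2-\det{}_3\bigr)$ for a suitable labelling of the same three determinants.

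Finally I would substitute the sign relations from the first step: every such combination evaluates to $2(-1)-(-1)-(-1)=0$ (or its negative), which yields all four equalities at once. I do not expect any genuine obstacle here: the argument is essentially the bookkeeping of four determinant computations, and the only subtlety is getting the signs right and verifying that the presence of $\infty$ does not break the sign conventions — both of which are dispatched cleanly by the projective formulation above. (As a side remark, the same computation shows that whenever a denominator such as $2r(q)-r(p)$ vanishes the matching right-hand side is $\infty$ as well, so the $\ominus$-expressions are always meaningful as elements of $\QQi$.)
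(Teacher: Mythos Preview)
Your proposal is correct and follows essentially the same approach as the paper: the paper simply writes $p=\tfrac{a}{b}$, $q=\tfrac{c}{d}$, $r=\tfrac{e}{f}$, records the three sign-determined relations $cb-ad=1$, $eb-af=1$, $ed-cf=1$, and states that the identities follow from them. You have just spelled out the determinant computations (and the handling of $\infty$) that the paper leaves implicit.
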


\begin{proof}
Write $p=\frac{a}{b}$, $q=\frac{c}{d}$ and $r=\frac{e}{f}$. The statements 
follow easily by applying the equations
$cb-ad=1$, $eb-af=1$ and $ed-cf=1$. 
\end{proof}

\subsection{Tilting sheaves realizing a Farey triple} \label{ssec:TiFa}
Given a tilting sheaf $T=\bigoplus_{i=1}^n T_i$ we define its \emph{slope set} 
to be $\slopeset(T)=\{\slope(T_i)\mid i=1,\ldots,n\}$.
We say that a tilting sheaf $T$ \emph{realizes} a Farey triple $\oq$
if $\slopeset(T)=\oq$.  In Example~\ref{exmpl:2222} we have given a tilting 
sheaf realizing the Farey triple $\{0,1,\infty\}$ for the weight sequence 
$(2,2,2,2)$.
We now consider the remaining weight sequences $(3,3,3)$, $(4,4,2)$ and 
$(6,3,2)$.
In each case we indicate how a tilting sheaf $T$ realizing the Farey triple 
$\{0,1,\infty\}$ may be obtained from the canonical configuration $\Tcan$ by a 
sequence of mutations.
For this, we label the vertices of the quiver of $\End(\Tcan)$ by $1,\ldots,n$ 
in such a way that 
$1$ is the source, $n$ is the sink, and the remaining vertices are labeled 
$2,\ldots,n-1$ from left to right and top to bottom in the picture of the 
quiver given in Example~\ref{exmpl:can}.

For the (tubular) weight sequence $(3,3,3)$ 
we use the sequence $7,5,3,8,1$ of mutations (\ie the composition of 
mutations $\mu_1\mu_8\mu_3\mu_5\mu_7$) to obtain from $\Tcan$ a
tilting sheaf $T$ realizing $\{0,1,\infty\}$.  
The quiver of $\End(T)$ is isomorphic to the first quiver in
Figure~\ref{fig:ex-real}.
The following table contains the information about degree and rank of 
the indecomposable direct summands of $T$.
\begin{center}
\begin{picture}(190,40)
\multiput(0,0)(0,20){3}{\line(1,0){190}}
\put(0,0){\line(0,1){40}}
\multiput(30,0)(20,0){9}{\line(0,1){40}}
\put(15,28){\HBCenter{$i$}}
\put(15,8){\HBCenter{$\frac{\deg(T_i)}{\rk(T_i)}$}}
\put(40,28){\HBCenter{$u_1$}}
\put(60,28){\HBCenter{$u_2$}}
\put(80,28){\HBCenter{$u_3$}}
\multiput(40,8)(20,0){3}{\HBCenter{$\frac{1}{1}$}}
\put(100,28){\HBCenter{$v_1$}}
\put(120,28){\HBCenter{$v_2$}}
\put(100,8){\HBCenter{$\frac{0}{1}$}}
\put(120,8){\HBCenter{$\frac{0}{2}$}}
\put(140,28){\HBCenter{$w_1$}}
\put(160,28){\HBCenter{$w_2$}}
\put(180,28){\HBCenter{$w_3$}}
\multiput(140,8)(20,0){3}{\HBCenter{$\frac{1}{0}$}}
\end{picture}
\end{center}
\begin{figure}[!ht]
\begin{picture}(330,100)
\put(24,40){
	\put(0,-2){
		\put(0,0){\HBCenter{\small$v_2$}}
		\put(0,36){\HBCenter{\small$u_1$}}
		\put(-27,18){\HBCenter{\small$w_3$}}
		\put(-27,-18){\HBCenter{\small$u_2$}}
		\put(27,18){\HBCenter{\small$w_2$}}
		\put(27,-18){\HBCenter{\small$u_3$}}
		\put(0,-36){\HBCenter{\small$w_1$}}
		\put(42,0){\HBCenter{\small$v_1$}}
	}
	\put(0,9){\vector(0,1){18}}
	\multiput(5.4,-3.6)(-27,-18){2}{\vector(3,-2){16.2}}
	\multiput(-5.4,-3.6)(27,-18){2}{\vector(-3,-2){16.2}}
	\put(-5.4,32.4){\vector(-3,-2){16.2}}
	\put(5.4,32.4){\vector(3,-2){16.2}}
	\qbezier[13](5.4,3.6)(13.5,9)(21.6,14.4)
	\qbezier[13](-5.4,3.6)(-13.5,9)(-21.6,14.4)
	\qbezier[13](0,-10)(0,-18)(0,-26)
	\put(27,-9){\vector(0,1){18}}
	\put(-27,-9){\vector(0,1){18}}
	\put(6,0){\vector(1,0){30}}
}
\put(145,4){
	\put(0,2){
		\put(-36,0){\HVCenter{\small$u_1$}}
		\put(0,0){\HVCenter{\small$v_3$}}
		\put(36,0){\HVCenter{\small$w_1$}}
		\put(-18,27){\HVCenter{\small$u_2$}}
		\put(18,27){\HVCenter{\small$w_2$}}
		\put(0,54){\HVCenter{\small$v_2$}}
		\put(-36,54){\HVCenter{\small$w_3$}}
		\put(36,54){\HVCenter{\small$u_3$}}
		\put(0,86){\HVCenter{\small$v_1$}}
	}
	\put(-18,27){\put(-3.6,-5.4){\vector(-2,-3){10.8}}}
	\multiput(-18,27)(18,27){2}{\put(3.6,-5.4){\vector(2,-3){10.8}}}
	\multiput(-18,27)(18,-27){2}{\put(3.6,5.4){\vector(2,3){10.8}}}
	\put(18,27){\put(3.6,-5.4){\vector(2,-3){10.8}}}
	\multiput(0,54)(36,0){2}{\put(-6,0){\vector(-1,0){24}}}
	\put(0,63){\vector(0,1){14}}
	\qbezier[14](-11,27)(0,27)(11,27)
	\put(18,27){\qbezier[13](3.6,5.4)(9,13.5)(14.4,21.6)}
	\put(-18,27){\qbezier[13](-3.6,5.4)(-9,13.5)(-14.4,21.6)}
}
\put(280,40){
	\put(0,0){
		\put(0,-18){\HVCenter{\small $v_3$}}
		\put(0,18){\HVCenter{\small $u_3$}}
		\put(-27,0){\HVCenter{\small $w_2$}}
		\put(-27,-36){\HVCenter{\small $u_1$}}
		\put(-27,36){\HVCenter{\small $v_2$}}
		\put(-57,0){\HVCenter{\small $w_1$}}
		\put(27,0){\HVCenter{\small $w_4$}}
		\put(27,-36){\HVCenter{\small $u_2$}}
		\put(27,36){\HVCenter{\small $v_1$}}
		\put(57,0){\HVCenter{\small $w_3$}}
	}
	\multiput(-27,36)(54,0){2}{\put(0,-8){\vector(0,-1){20}}}
	\put(0,18){\put(0,-8){\vector(0,-1){20}}}
	\multiput(0,-18)(0,36){2}{
		\put(5.4,3.6){\vector(3,2){16.2}}
	}
	\multiput(0,-18)(0,36){2}{
		\put(-5.4,3.6){\vector(-3,2){16.2}}
	}
	\put(27,-36){\put(-5.4,3.6){\vector(-3,2){16.2}}}
	\put(-27,-36){\put(5.4,3.6){\vector(3,2){16.2}}}
	\put(-33,0){\vector(-1,0){18}}
	\put(33,0){\vector(1,0){18}}
	\put(-27,0){\qbezier[13](6,4)(13.5,9)(21,14)}
	\put(27,0){\qbezier[13](-6,4)(-13.5,9)(-21,14)}
	\multiput(-27,-18)(54,0){2}{\qbezier[13](0,10)(0,0)(0,-10)}
}
\end{picture}
\caption{Examples of quivers of $\End(T)$ for $T$ realizing a Farey triple. 
The weight sequences from left to right are $(3,3,3)$, $(4,4,2)$ and $(6,3,2)$.}
\label{fig:ex-real}
\end{figure}
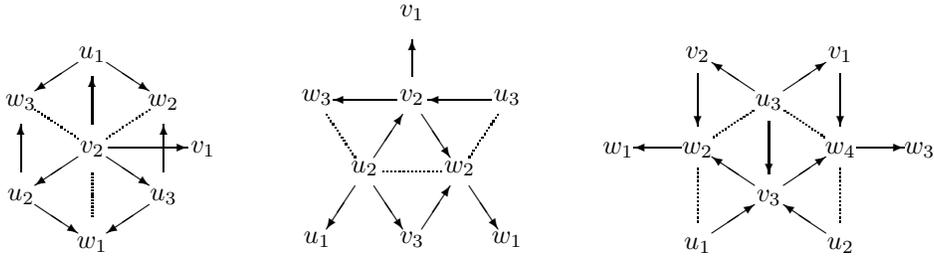
For the (tubular) weight sequence $(4,4,2)$ we use 
the mutation sequence $3,6,9,4,7,9,7,8,1,3$ to 
obtain from $\Tcan$ a tilting sheaf $T$ realizing $\{0,1,\infty\}$. 
The quiver of $\End(T)$ is isomorphic to the second quiver
in Figure~\ref{fig:ex-real}. The following table contains 
the information about degree and rank of the indecomposable direct
summands of $T$. 
\begin{center}
\begin{picture}(210,40)
\multiput(0,0)(0,20){3}{\line(1,0){210}}
\put(0,0){\line(0,1){40}}
\multiput(30,0)(20,0){10}{\line(0,1){40}}
\put(15,28){\HBCenter{$i$}}
\put(15,8){\HBCenter{$\frac{\deg(T_i)}{\rk(T_i)}$}}
\put(40,28){\HBCenter{$u_1$}}
\put(60,28){\HBCenter{$u_2$}}
\put(80,28){\HBCenter{$u_3$}}
\multiput(40,8)(40,0){2}{\HBCenter{$\frac{0}{1}$}}
\put(60,8){\HBCenter{$\frac{0}{2}$}}
\put(100,28){\HBCenter{$v_1$}}
\put(120,28){\HBCenter{$v_2$}}
\put(140,28){\HBCenter{$v_3$}}
\multiput(100,8)(40,0){2}{\HBCenter{$\frac{1}{1}$}}
\put(120,8){\HBCenter{$\frac{2}{2}$}}
\put(160,28){\HBCenter{$w_1$}}
\put(180,28){\HBCenter{$w_2$}}
\put(200,28){\HBCenter{$w_3$}}
\multiput(160,8)(40,0){2}{\HBCenter{$\frac{1}{0}$}}
\put(180,8){\HBCenter{$\frac{2}{0}$}}
\end{picture}
\end{center}

For the (tubular) weight sequence $(6,3,2)$ we use the
mutation sequence $5,10,8,6,10,3,4,8,9,7,2,4,6,5$ to 
obtain from $\Tcan$ a tilting sheaf $T$ realizing $\{0,1,\infty\}$. 
The quiver of $\End(T)$ is isomorphic to the third quiver 
in Figure~\ref{fig:ex-real}. 
The following table contains the information about degree and rank of the 
indecomposable direct summands of $T$.
\begin{center}
\begin{picture}(230,40)
\multiput(0,0)(0,20){3}{\line(1,0){230}}
\put(0,0){\line(0,1){40}}
\multiput(30,0)(20,0){11}{\line(0,1){40}}
\put(15,28){\HBCenter{$i$}}
\put(15,8){\HBCenter{$\frac{\deg(T_i)}{\rk(T_i)}$}}
\put(40,28){\HBCenter{$u_1$}}
\put(60,28){\HBCenter{$u_2$}}
\put(80,28){\HBCenter{$u_3$}}
\multiput(40,8)(20,0){2}{\HBCenter{$\frac{0}{1}$}}
\put(80,8){\HBCenter{$\frac{0}{2}$}}
\put(100,28){\HBCenter{$v_1$}}
\put(120,28){\HBCenter{$v_2$}}
\put(140,28){\HBCenter{$v_3$}}
\put(160,28){\HBCenter{$v_4$}}
\multiput(100,8)(40,0){2}{\HBCenter{$\frac{1}{1}$}}
\multiput(120,8)(40,0){2}{\HBCenter{$\frac{2}{2}$}}
\put(180,28){\HBCenter{$w_1$}}
\put(200,28){\HBCenter{$w_2$}}
\put(220,28){\HBCenter{$w_3$}}
\multiput(180,8)(20,0){2}{\HBCenter{$\frac{1}{0}$}}
\put(220,8){\HBCenter{$\frac{2}{0}$}}
\end{picture}
\end{center}

\subsection{Explicit mutation sequences}
For each of the tubular weight sequences $(2,2,2,2)$, $(3,3,3)$, $(4,4,2)$ 
and $(6,3,2)$ we give explicit mutation sequences which transform a given 
tilting sheaf realizing a Farey triple into another one realizing a mutated 
Farey triple. 

We start by considering the case where the weight sequence of $\XX$ is 
$(2,2,2,2)$.
Let $\cT_{2,2,2,2}$ be the set of (isomorphism classes of) tilting sheaves 
\[
T=\bigoplus_{i\in I} T_i \text{ with } I = \{u_1,u_2,v_1,v_2,w_1,w_2\}, 
\]
such that 
\begin{itemize}
\item 
$\slope(T_{x_1})=\slope(T_{x_2})$ for all $x\in\{u,v,w\}$,
\item
$\slopeset(T)$ is a Farey triple, 
\item
the quiver $Q$ with relations of $\End(T)$ looks as follows:
\begin{center}
\begin{picture}(100,50)
\put(5,3){
	\put(0,0){\HBCenter{\small $u_1$}}
	\put(0,40){\HBCenter{\small $u_2$}}
	\put(40,0){\HBCenter{\small $v_1$}}
	\put(40,40){\HBCenter{\small $v_2$}}
	\put(80,0){\HBCenter{\small $w_1$}}
	\put(80,40){\HBCenter{\small $w_2$}}
}
\put(5,5){
	\multiput(6,6)(40,0){2}{\vector(1,1){26}}
	\multiput(10,0)(40,0){2}{\vector(1,0){20}}
	\multiput(6,34)(40,0){2}{\vector(1,-1){26}}
	\multiput(10,40)(40,0){2}{\vector(1,0){20}}
	\qbezier[30](15,5)(40,20)(65,5)
	\qbezier[30](15,35)(40,20)(65,35)
	\qbezier[15](15,7)(30,18)(40,20)\qbezier[15](65,33)(50,22)(40,20)
	\qbezier[15](15,33)(30,22)(40,20)\qbezier[15](65,7)(50,18)(40,20)
}
\end{picture}
\end{center}
\end{itemize}
Note that $\cT_{2,2,2,2}$ is not empty as shown in 
Section~\ref{ssec:TiFa}.
Define the mutation sequences 
$\mu_u=\mu_{u_1}\mu_{u_2}$, $\mu_v=\mu_{v_1}\mu_{v_2}$
and $\mu_w=\mu_{w_1}\mu_{w_2}$.
 
\begin{prp} \label{prp2222}
Let $\XX$ be a weighted projective line with weight sequence $(2,2,2,2)$.
Then for any $T\in\cT_{2,2,2,2}$ and $x\in\{u,v,w\}$ 
we have that $\mu_x(T)\in \cT$ and 
$\slopeset(\mu_x(T))=\mu_{q_x}(\slopeset(T))$, where $q_x=\slope(T_{x_1})$.
\end{prp}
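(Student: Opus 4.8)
The plan is to prove Proposition~\ref{prp2222} by a direct but organized computation, tracking three pieces of data through the length-two mutation sequence $\mu_x = \mu_{x_1}\mu_{x_2}$: the quiver with relations of $\End(T)$, the slopes of the summands, and the ranks/degrees. The key structural input is H\"ubner's description of how the quiver with relations transforms under a single mutation (\cite[Kor.~4.16]{Hueb}), together with Proposition~\ref{prop:h-source-sink}, which tells us whether each $T_{x_i}$ is a H\"ubner-source or H\"ubner-sink from the sign of $\rk(S_{x_i})$ (and $\deg(S_{x_i})$ when the rank vanishes). Since the members of $\cT_{2,2,2,2}$ all share the same quiver shape, the whole argument is a finite case analysis driven by the position of $q_x$ relative to the other two slopes.

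\emph{First}, I would fix notation: write $\slopeset(T) = \{q_u, q_v, q_w\}$ with $q_x = \slope(T_{x_1}) = \slope(T_{x_2})$ for each $x$, and observe that this is a Farey triple by hypothesis. \emph{Next}, by symmetry of the quiver in the three pairs $\{u,v,w\}$, it suffices to treat a single direction, say $x = v$; the cases $x=u$ and $x=w$ follow by relabeling. \emph{Then} I would determine, using Proposition~\ref{prop:h-source-sink}, which of $T_{v_1}, T_{v_2}$ are H\"ubner-sources and which are sinks. Here the rank-additivity formula (Proposition~\ref{prop:rank-additivity}) applied on the given quiver lets me compute $\rk(S_{v_i})$ from the local arrow/relation data around the vertices $v_i$, and the prescribed rank table from Section~\ref{ssec:TiFa} supplies a base case. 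The outcome will depend on whether $q_v$ is the middle or an extreme element of the triple, which is exactly the dichotomy appearing in the definition of $\mu_{q_v}(\slopeset(T))$.

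\emph{After} fixing which vertex is a source or sink, I would apply \cite[Kor.~4.16]{Hueb} (and its dual) twice, once for $\mu_{v_2}$ and once for $\mu_{v_1}$, to compute the new quiver with relations. The claim then splits into three checks. \textbf{(1)} The resulting quiver is again of the prescribed shape, so that $\mu_v(T) \in \cT_{2,2,2,2}$ as a quiver; the arrow-adding and relation-adding rules (ii)--(vi) together with the cancellation rule (vii) should close up on the six-vertex template. \textbf{(2)} The two summands in the mutated $v$-pair again share a common slope: this is where I invoke the slope arithmetic, computing $\slope(\Ker\sigma_{v_i})$ or $\slope(\Coker\rho_{v_i})$ and matching them against the Farey identities of Lemma~\ref{lem:Farey_triple_eq} — specifically $2q\ominus p = q\oplus r$ and $2q \ominus r = q\oplus p$ for the middle-element case, and $2p\ominus q = 2r\ominus q = p\ominus r$ for the extreme case. \textbf{(3)} The new slope set equals $\mu_{q_v}(\slopeset(T))$: the common new slope of the $v$-pair is precisely $q_v \ominus(\text{or }\oplus)$ the combination of $q_u, q_w$ dictated by whether $q_v$ is middle or extreme, which is by construction the definition of $\mu_{q_v}$ on the Farey triple.

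\emph{The main obstacle} will be the slope bookkeeping in step \textbf{(2)}--\textbf{(3)}: I need to compute the slope of the new summand $T_{v_i}^\ast = \Ker\sigma_{v_i}\oplus\Coker\rho_{v_i}$ and show it is the expected Farey sum or difference. This requires reading off $\rk$ and $\deg$ of the kernel or cokernel from the defining triangle $T''\to T'\to X\to\Sigma T''$ and the additivity of rank and degree on the Grothendieck group; the subtlety is that mutation acts on \emph{two} summands of a common slope, and I must verify that after mutating $v_2$ the local data around $v_1$ has changed in exactly the way that keeps $\slope(T_{v_1}^\ast) = \slope(T_{v_2}^\ast)$. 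I expect the Farey identities of Lemma~\ref{lem:Farey_triple_eq} to resolve this cleanly once the correct source/sink pattern is identified, so the real care is in getting that pattern — and the intermediate quiver after the first of the two mutations — exactly right.
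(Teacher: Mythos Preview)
Your overall strategy---track the quiver with relations, the H\"ubner source/sink status, and the ranks/degrees through the two-step mutation $\mu_{x_1}\mu_{x_2}$, then match against the Farey identities of Lemma~\ref{lem:Farey_triple_eq}---is correct and is exactly what the paper does. However, your reduction ``by symmetry of the quiver in the three pairs $\{u,v,w\}$, it suffices to treat a single direction, say $x=v$'' is wrong. The quiver with relations of $\End(T)$ is \emph{not} symmetric under permutation of $u,v,w$: the $u_i$ are sources, the $w_i$ are sinks, and the $v_i$ sit in the middle. (There is only a $\ZZ/2$-symmetry swapping $u\leftrightarrow w$ and reversing all arrows and relations.) In particular, since in $\coh\XX$ nonzero morphisms between indecomposable summands of a tilting sheaf go in the direction of increasing slope, the quiver forces $q_u<q_v<q_w$: the labels $u,v,w$ already encode which slope is minimal, middle, and maximal. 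So your anticipated dichotomy ``whether $q_v$ is the middle or an extreme element'' never arises---$q_v$ is always the middle element---and treating only $x=v$ cannot cover the other two directions by relabeling.

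The fix is simply to run your plan in each of the three cases separately, as the paper does. For $x=u$ the vertices $T_{u_i}$ are sources of $Q_T$, hence H\"ubner-sources by Proposition~\ref{prop:Huebner-char} (no need for Proposition~\ref{prop:h-source-sink} here); after $\mu_{u_2}$ the vertex $u_1$ is still a H\"ubner-source (a relation starts there), and the resulting quiver is isomorphic to the original. The new summands satisfy $\rk(T'_{u_i})=2r_v-r_u$, $\deg(T'_{u_i})=2d_v-d_u$, and Lemma~\ref{lem:Farey_triple_eq} gives $\slope(T'_{u_i})=2q_v\ominus q_u=q_v\oplus q_w$, which is exactly $\mu_{q_u}(\slopeset(T))$ since $q_u$ is minimal. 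The case $x=w$ is dual. For $x=v$ the vertices $T_{v_1},T_{v_2}$ may be H\"ubner-sources or H\"ubner-sinks (necessarily both of the same kind, by the symmetry of the quiver in $v_1\leftrightarrow v_2$); in either sub-case the new slope comes out to $2q_u\ominus q_v=2q_w\ominus q_v=q_u\ominus q_w$, again matching $\mu_{q_v}(\slopeset(T))$.
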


\begin{proof}
Let $\slopeset(T)=\{q_u,q_v,q_w\}$ with $q_u<q_v<q_w$. Since 
$\slope(T_{u_i})<\slope(T_{v_i})<\slope(T_{w_i})$ for each $i=1,2$ it follows 
that 
$\slope(T_{u_i})=q_u$, $\slope(T_{v_i})=q_v$ and 
$\slope(T_{w_i})=q_w$ for $i=1,2$.

If $x=u$ (resp.~$x=w$) then the mutation takes place in two Hübner-sources 
(resp.~Hübner-sinks). It follows from Proposition~\ref{prop:rank-additivity} 
and the symmetry of the quiver that $\rk(T_{u_1})=\rk(T_{u_2})$ and similarly 
$\rk(T_{v_1})=\rk(T_{v_2})$ and $\rk(T_{w_1})=\rk(T_{w_2})$. 
Therefore the corresponding equalities hold also for the degree. 
We denote $r_u=\rk(T_{u_i})$, $d_f=\rk(T_{u_i})$ and similarly define 
$r_v$, $d_v$, $r_w$ and $d_w$.

Now assume that $x=u$. By mutation in $u_2$ we obtain the following quiver. 
\begin{center}
\begin{picture}(100,50)
\put(5,3){
	\put(0,20){\HBCenter{\small $u_1$}}
	\put(60,20){\HBCenter{\small $u_2$}}
	\put(30,0){\HBCenter{\small $v_1$}}
	\put(30,40){\HBCenter{\small $v_2$}}
	\put(90,0){\HBCenter{\small $w_1$}}
	\put(90,40){\HBCenter{\small $w_2$}}
}
\put(5,5){
	\multiput(6,24)(60,0){2}{\vector(3,2){16}}
	\put(36,36){\vector(3,-2){16}}
	\multiput(6,16)(60,0){2}{\vector(3,-2){16}}
	\put(36,4){\vector(3,2){16}}
	\qbezier[30](9,22)(45,30)(81,38)
	\qbezier[30](9,18)(45,10)(81,2)
}
\end{picture}
\end{center}
Thus by Proposition~\ref{prop:Huebner-char} the vertex $T_{u_1}$ is a Hübner 
source of the quiver of $\mu_{u_2}(T)$. Hence the quiver of $\mu_u(T)$ is 
isomorphic to $Q$.  Using that the rank and the degree are additive on exact 
sequences we get easily that $\rk(T'_{u_i})=2 r_v-r_u$ and
$\deg(T'_{u_i})=2d_v-d_u$ for $i=1,2$, where $T'_{u_1}$, and $T'_{u_2}$ denote the
two summands of $\mu_u(T)$ which are obtained instead of $T_{u_1}$ and $T_{u_2}$ 
by mutation.
Using Lemma~\ref{lem:Farey_triple_eq} we get
$$
\slope(T'_{u_i})=\frac{2 d_v-d_u}{2 r_v-r_u}=\frac{d_v+d_w}{r_v+r_w}=
q_v\oplus q_w.
$$
This shows that $\slopeset(\mu_f(T))=\mu_{q_u}(\slopeset(T))$ and hence the 
result in case $x=u$. 

The case where $x=w$ is completely similar and the case $x=v$ is also similar 
with the unique difference that it is possible for the vertices 
$T_{v_1}$, $T_{v_2}$ to be a Hübner-source or a Hübner-sink (both of the same 
kind).
\end{proof}

We now focus on the case where the weight sequence is $(3,3,3)$.
Here we look at three possible quivers with relations.

\begin{figure}[!ht]
\begin{picture}(326,80)
\put(20,0){
\put(24,40){
	\put(-47,34){\HBCenter{$Q_u$:}}
	\put(0,-2){
		\put(0,0){\HBCenter{\small$v_2$}}
		\put(0,36){\HBCenter{\small$u_1$}}
		\put(-27,18){\HBCenter{\small$w_3$}}
		\put(-27,-18){\HBCenter{\small$u_2$}}
		\put(27,18){\HBCenter{\small$w_2$}}
		\put(27,-18){\HBCenter{\small$u_3$}}
		\put(0,-36){\HBCenter{\small$w_1$}}
		\put(42,0){\HBCenter{\small$v_1$}}
	}
	\put(0,-27){\vector(0,1){18}}
 	\put(-27,18){\multiput(6,-4)(0,-36){2}{\vector(3,-2){15}}}
	\put(27,18){\multiput(-6,-4)(0,-36){2}{\vector(-3,-2){15}}}
	\put(-6,32){\vector(-3,-2){15}}
	\put(6,32){\vector(3,-2){15}}
	\qbezier[13](6,-4)(13.5,-9)(21,-14)
	\qbezier[13](-6,-4)(-13.5,-9)(-21,-14)
	\qbezier[13](0,10)(0,18)(0,26)
	\put(27,-9){\vector(0,1){18}}
	\put(-27,-9){\vector(0,1){18}}
	\put(6,0){\vector(1,0){30}}
}
\put(144,40){
	\put(-47,34){\HBCenter{$Q_v$:}}
	\put(0,-2){
		\put(0,0){\HBCenter{\small$v_2$}}
		\put(0,36){\HBCenter{\small$u_1$}}
		\put(-27,18){\HBCenter{\small$w_3$}}
		\put(-27,-18){\HBCenter{\small$u_2$}}
		\put(27,18){\HBCenter{\small$w_2$}}
		\put(27,-18){\HBCenter{\small$u_3$}}
		\put(0,-36){\HBCenter{\small$w_1$}}
		\put(42,0){\HBCenter{\small$v_1$}}
	}
	\put(0,9){\vector(0,1){18}}
	\multiput(6,-4)(-27,-18){2}{\vector(3,-2){15}}
	\multiput(-6,-4)(27,-18){2}{\vector(-3,-2){15}}
	\put(-6,32){\vector(-3,-2){15}}
	\put(6,32){\vector(3,-2){15}}
	\qbezier[13](6,4)(13.5,9)(21,14)
	\qbezier[13](-6,4)(-13.5,9)(-21,14)
	\qbezier[13](0,-10)(0,-18)(0,-26)
	\put(27,-9){\vector(0,1){18}}
	\put(-27,-9){\vector(0,1){18}}
	\put(6,0){\vector(1,0){30}}
}
\put(264,40){
	\put(-47,34){\HBCenter{$Q_w$:}}
	\put(0,-2){
		\put(0,0){\HBCenter{\small$v_2$}}
		\put(0,36){\HBCenter{\small$u_1$}}
		\put(-27,18){\HBCenter{\small$w_3$}}
		\put(-27,-18){\HBCenter{\small$u_2$}}
		\put(27,18){\HBCenter{\small$w_2$}}
		\put(27,-18){\HBCenter{\small$u_3$}}
		\put(0,-36){\HBCenter{\small$w_1$}}
		\put(42,0){\HBCenter{\small$v_1$}}
	}
	\multiput(0,9)(0,-36){2}{\vector(0,1){18}}
	\put(-27,18){\put(6,-4){\vector(3,-2){15}}}
	\put(27,18){\put(-6,-4){\vector(-3,-2){15}}}
	\put(6,-4){\vector(3,-2){15}}
	\put(-6,-4){\vector(-3,-2){15}}
	\multiput(0,0)(-27,-54){2}{\qbezier[13](5.4,32.4)(13.5,27)(21.6,21.6)}
	\multiput(0,0)(27,-54){2}{\qbezier[13](-5.4,32.4)(-13.5,27)(-21.6,21.6)}
	\multiput(-27,18)(54,0){2}{\qbezier[13](0,-10)(0,-18)(0,-26)}
	\put(6,0){\vector(1,0){30}}
}
}
\end{picture}
\caption{Quivers with relations for the weight sequence $(3,3,3)$.}
\label{fig:quivers-333}
\end{figure}
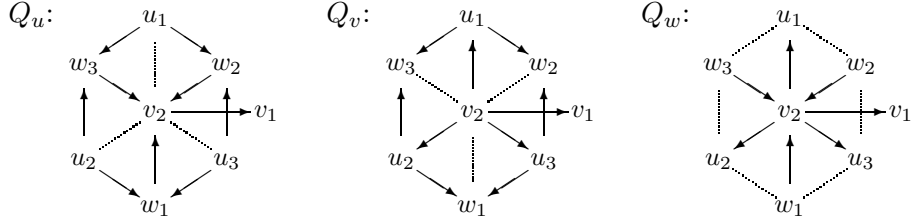

Let $\cT_{(3,3,3)}$ be the set of (isomorphism classes of) tilting sheaves 
\[
T=\oplus_{i\in I} T_i \text{ with } I=\{u_1,u_2,u_3,v_1,v_2,w_1,w_2,w_3\},
\]
such that  
\begin{itemize}
\item the rank and degree of the indecomposable summands of $T$ are 
related as shown in the following table:
\vspace{.5ex}
\begin{center}
\begin{picture}(190,40)
\multiput(0,0)(0,20){3}{\line(1,0){190}}
\put(0,0){\line(0,1){40}}
\multiput(30,0)(20,0){9}{\line(0,1){40}}
\put(15,28){\HBCenter{$i$}}
\put(15,8){\HBCenter{$\frac{\deg(T_i)}{\rk(T_i)}$}}
\put(40,28){\HBCenter{$u_1$}}
\put(60,28){\HBCenter{$u_2$}}
\put(80,28){\HBCenter{$u_3$}}
\multiput(40,8)(20,0){3}{\HBCenter{$\frac{a}{b}$}}
\put(100,28){\HBCenter{$v_1$}}
\put(120,28){\HBCenter{$v_2$}}
\put(100,8){\HBCenter{$\frac{c}{d}$}}
\put(120,8){\HBCenter{$\frac{2c}{2d}$}}
\put(140,28){\HBCenter{$w_1$}}
\put(160,28){\HBCenter{$w_2$}}
\put(180,28){\HBCenter{$w_3$}}
\multiput(140,8)(20,0){3}{\HBCenter{$\frac{e}{f}$}}
\end{picture}
\end{center}
\item
$\slopeset(T)=\{\frac{a}{b},\frac{c}{d}, \frac{e}{f}\}$ is a Farey triple,
\item
the quiver with relations of $\End(T)$ is one of the three quivers 
of Figure~\ref{fig:quivers-333}. 
\end{itemize}
Note that $\cT_{3,3,3}$ is not empty by the construction in 
Section~\ref{ssec:TiFa}.

We define the mutation sequences
\begin{align*}
\mu_u&=\mu_{u_3}\mu_{u_2}\mu_{u_1},\\ 
\mu_v&=\mu_{u_3}\mu_{v_2}\mu_{v_1}\mu_{u_2}\mu_{w_3}
\mu_{u_1}\mu_{v_2},\\ 
\mu_w&=\mu_{w_1}\mu_{w_2}\mu_{w_3}.
\end{align*}

\begin{prp} \label{prp333}
Let $\XX$ be a weighted projective line with weight sequence $(3,3,3)$.
Then for any $T\in\cT_{(3,3,3)}$ and $x\in\{u,v,w\}$ we have 
$\mu_x(T)\in \cT_{(3,3,3)}$ 
and $\slopeset(\mu_x(T))=\mu_{q_x}(\slopeset(T))$, 
where $q_x=\slope(T_{x_1})$.
\end{prp}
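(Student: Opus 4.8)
The plan is to verify Proposition~\ref{prp333} by direct computation, imitating the strategy of the proof of Proposition~\ref{prp2222}, but now tracking the three distinct quiver shapes $Q_u$, $Q_v$, $Q_w$ of Figure~\ref{fig:quivers-333} and the three prescribed mutation sequences $\mu_u$, $\mu_v$, $\mu_w$. For each choice of $x\in\{u,v,w\}$ I would proceed in two stages: first show that applying the mutation sequence $\mu_x$ to a tilting sheaf $T\in\cT_{(3,3,3)}$ returns a tilting sheaf whose quiver with relations is again one of the three allowed quivers (so $\mu_x(T)\in\cT_{(3,3,3)}$), and second compute the new slopes to confirm $\slopeset(\mu_x(T))=\mu_{q_x}(\slopeset(T))$.

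For the \emph{quiver-shape} part I would proceed step-by-step through each elementary mutation $\mu_k$ composing $\mu_x$. Writing $\slopeset(T)=\{\tfrac{a}{b},\tfrac{c}{d},\tfrac{e}{f}\}$ with $\tfrac{a}{b}<\tfrac{c}{d}<\tfrac{e}{f}$, so that $q_u=\tfrac{a}{b}$, $q_v=\tfrac{c}{d}$, $q_w=\tfrac{e}{f}$, I would at each intermediate step decide whether the vertex being mutated is a Hübner-source or a Hübner-sink, using Proposition~\ref{prop:h-source-sink} together with the rank/degree data in the table and the sign of the associated simple module's rank. Once the type is fixed, Hübner's mutation rule \cite[Kor.~4.16]{Hueb} (and its dual for sinks) determines the new arrows and relations, with cancellation of parallel arrow/relation pairs by rule~(vii). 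The symmetric cases $x=u$ and $x=w$ should be the most transparent: here $\mu_u$ mutates the three sources $u_1,u_2,u_3$ and $\mu_w$ the three sinks $w_1,w_2,w_3$, and by the three-fold symmetry of the quivers one expects the rank-additivity of Proposition~\ref{prop:rank-additivity} to force $\rk(T_{u_1})=\rk(T_{u_2})=\rk(T_{u_3})$ and likewise for the $w$'s, just as the two-fold symmetry did in the $(2,2,2,2)$ case.

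For the \emph{slope} part I would use that rank and degree are additive on the exact sequences defining the Hübner mutation, exactly as in the proof of Proposition~\ref{prp2222}, to compute the rank and degree of each newly created summand as an integer combination of the old data. The identities of Lemma~\ref{lem:Farey_triple_eq}---for instance $2q\ominus p=q\oplus r$ and $2p\ominus q=p\ominus r$---then translate these rank/degree combinations into the correct Farey sum or difference, matching the prescription of $\mu_{q_x}$ on the Farey triple. One must check that the new common slope value, e.g. $q_v\oplus q_w$ when $x=u$, is consistent across all three mutated summands and that the remaining summands keep their slopes, yielding the asserted mutated Farey triple.

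The main obstacle will be the case $x=v$: unlike $\mu_u$ and $\mu_w$, the sequence $\mu_v=\mu_{u_3}\mu_{v_2}\mu_{v_1}\mu_{u_2}\mu_{w_3}\mu_{u_1}\mu_{v_2}$ is long, non-symmetric, and mutates vertices of mixed type (some $u$'s, some $w$'s, and $v_2$ twice), so the intermediate quivers need not lie in $\cT_{(3,3,3)}$ and the source/sink status of each mutated vertex must be recomputed from scratch at every step via Proposition~\ref{prop:h-source-sink}. As flagged in Remark~\ref{rem:warning}, the Hübner-type of a vertex is \emph{not} determined by the quiver shape alone, so I would keep careful track of the rank and degree of every summand throughout the sequence, using Proposition~\ref{prop:rank-additivity} to recover ranks at each stage. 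The bookkeeping here is the delicate point; once the final quiver is shown to be $Q_v$ with the correctly permuted slope labels, the slope computation closes via Lemma~\ref{lem:Farey_triple_eq} as in the other cases.
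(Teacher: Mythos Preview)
Your overall strategy---tracking quiver shape, rank/degree data, and H\"ubner-source/sink status step by step, then closing with Lemma~\ref{lem:Farey_triple_eq}---is exactly the paper's approach. However, two of your working assumptions are wrong and would derail the computation if carried over.

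First, you fix the ordering $q_u<q_v<q_w$. This is false: the labels $u,v,w$ are attached to positions in the quiver (and to the table of ranks), not to the ordering of slopes. Already for the initial tilting sheaf in Section~\ref{ssec:TiFa} one has $q_v=0<q_u=1<q_w=\infty$. In fact the slope ordering is determined by \emph{which} of the three quivers $Q_u,Q_v,Q_w$ is the quiver of $\End(T)$, and it changes after each application of $\mu_x$. The paper therefore organizes the proof by first fixing the starting quiver shape and reading off the slope ordering from it; you must do the same, and handle all three starting shapes for each $x$.

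Second, you describe $\mu_u$ as ``mutating the three sources $u_1,u_2,u_3$'' and expect this case to be transparent. In the quiver $Q_v$ the vertices $u_i$ are neither sources nor sinks in the ordinary sense, and their H\"ubner-type genuinely depends on the numerical data via Proposition~\ref{prop:h-source-sink}: the paper finds two sub-cases (a)~$b\geq 2d$ and (b)~$b<2d$, leading to different intermediate quivers and to the two different Farey outcomes $q_v\oplus q_w$ versus $q_v\ominus q_w$. Likewise, for $x=v$ starting from $Q_w$ the paper needs \emph{three} numerical sub-cases, and the distinction affects the H\"ubner-type at several intermediate steps, not just the first. So the ``simple'' cases are already more branched than you anticipate, and the long sequence $\mu_v$ must be run separately in each branch. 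Once you drop the two assumptions above and build in this finer case analysis, your plan goes through exactly as in the paper.
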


\begin{proof}
Let $T\in\cT_{(3,3,3)}$ and denote $T'=\mu_x(T)$. 
We start with the case $x=u$. 
Let us first assume that the quiver of $\End(T)$ is $Q_c$.
Then $\frac{c}{d}<\frac{a}{b}<\frac{e}{f}$ and therefore 
$\frac{a}{b}=\frac{c}{d}\oplus\frac{e}{f}$. Using 
Proposition~\ref{prop:rank-additivity} we get that $b=d+f$ and then $a=c+e$.
We will distinguish whether $T_{u_1}$ is a Hübner-source or Hübner-sink using 
Proposition~\ref{prop:h-source-sink} and observe that if $b=2d$ then it follows
that $a>2c$. Hence we get the two cases:
\begin{itemize}
\item[(a)]
$T_{f_i}$ is a Hübner-source, which is equivalent to $b\geq 2d$, or
\item[(b)]
$T_{f_i}$ is a Hübner-sink, which is equivalent to $b< 2d$.
\end{itemize}
In case (a), the effect of the mutation sequence $\mu_u$ on the quiver of 
$\End(T)$ is as depicted in the next illustration. 
We observe that by Proposition~\ref{prop:h-source-sink} also $T_{u_2}$ is a 
Hübner-source in $\mu_{u_1}(T)$ and similarly $T_{u_3}$ is a Hübner-source of 
$\mu_{u_2}\mu_{u_1}(T)$.
\begin{center}
\begin{picture}(326,100)
\put(20,0){
\put(24,40){
	\put(0,50){\HBCenter{$\mu_{u_1}(T)$:}}
	\put(0,-2){
		\put(0,0){\HBCenter{\small$v_2$}}
		\put(0,36){\HBCenter{\small$u_1$}}
		\put(-27,18){\HBCenter{\small$w_3$}}
		\put(-27,-18){\HBCenter{\small$u_2$}}
		\put(27,18){\HBCenter{\small$w_2$}}
		\put(27,-18){\HBCenter{\small$u_3$}}
		\put(0,-36){\HBCenter{\small$w_1$}}
		\put(42,0){\HBCenter{\small$v_1$}}
	}
	\multiput(0,-36)(0,36){2}{\qbezier[13](0,9)(0,18)(0,27)}
 	\put(-27,-18){\put(6,-4){\vector(3,-2){15}}}
	\put(27,-18){\put(-6,-4){\vector(-3,-2){15}}}
	\put(-27,18){\put(6,4){\vector(3,2){15}}}
	\put(27,18){\put(-6,4){\vector(-3,2){15}}}
	\put(-6,-4){\vector(-3,-2){15}}
	\put(6,-4){\vector(3,-2){15}}
	\put(27,-9){\vector(0,1){18}}
	\put(-27,-9){\vector(0,1){18}}
	\put(6,0){\vector(1,0){30}}
}
\put(144,40){
	\put(0,50){\HBCenter{$\mu_{u_2}\mu_{u_1}(T)$:}}
	\put(0,-2){
		\put(0,0){\HBCenter{\small$v_2$}}
		\put(0,36){\HBCenter{\small$u_1$}}
		\put(-27,18){\HBCenter{\small$w_3$}}
		\put(-27,-18){\HBCenter{\small$u_2$}}
		\put(27,18){\HBCenter{\small$w_2$}}
		\put(27,-18){\HBCenter{\small$u_3$}}
		\put(0,-36){\HBCenter{\small$w_1$}}
		\put(42,0){\HBCenter{\small$v_1$}}
	}
	\qbezier[13](0,9)(0,18)(0,27)
 	\put(0,-36){\put(-6,4){\vector(-3,2){15}}}
	\put(27,-18){\put(-6,-4){\vector(-3,-2){15}}}
	\put(-27,18){\put(6,4){\vector(3,2){15}}}
	\put(27,18){\put(-6,4){\vector(-3,2){15}}}
	\put(-6,4){\vector(-3,2){15}}
	\qbezier[13](-6,-4)(-13.5,-9)(-21,-14)
	\put(6,-4){\vector(3,-2){15}}
	\put(27,-9){\vector(0,1){18}}
	\put(-27,9){\vector(0,-1){18}}
	\put(6,0){\vector(1,0){30}}
}
\put(264,40){
	\put(0,50){\HBCenter{$\mu_{u_3}\mu_{u_2}\mu_{u_1}(T)$:}}
	\put(0,-2){
		\put(0,0){\HBCenter{\small$v_2$}}
		\put(0,36){\HBCenter{\small$u_1$}}
		\put(-27,18){\HBCenter{\small$w_3$}}
		\put(-27,-18){\HBCenter{\small$u_2$}}
		\put(27,18){\HBCenter{\small$w_2$}}
		\put(27,-18){\HBCenter{\small$u_3$}}
		\put(0,-36){\HBCenter{\small$w_1$}}
		\put(42,0){\HBCenter{\small$v_1$}}
	}
	\qbezier[13](0,9)(0,18)(0,27)
 	\put(0,-36){\put(-6,4){\vector(-3,2){15}}}
	\put(0,-36){\put(6,4){\vector(3,2){15}}}
	\put(-27,18){\put(6,4){\vector(3,2){15}}}
	\put(27,18){\put(-6,4){\vector(-3,2){15}}}
	\put(-5.4,3.6){\vector(-3,2){15}}
	\put(5.4,3.6){\vector(3,2){15}}
	\qbezier[13](-6,-4)(-13.5,-9)(-21,-14)
	\qbezier[13](6,-4)(13.5,-9)(21,-14)
	\put(27,9){\vector(0,-1){18}}
	\put(-27,9){\vector(0,-1){18}}
	\put(0,-9){\vector(0,-1){18}}
	\put(6,0){\vector(1,0){30}}
}
}
\end{picture}
\end{center}
The new summands $T'_{u_i}$ are obtained as cokernels: 
$$
T_{u_i}\rightarrow T_{w_j}\oplus T_{w_h}\rightarrow T'_{u_i}
$$
where $j,h$ are such that $\{h,i,j\}=\{1,2,3\}$. Consequently,
for each $i=1,2,3$  the rank and the degree of $T'_{u_i}$ equals $2f-b=f-d$ and
$2e-a=e-c$ respectively. Hence $\mu(T'_{u_i})=\frac{e}{f}\ominus\frac{c}{d}$. 
This shows that $\slopeset(\mu_u(T))=\mu_{\frac{a}{b}}(\slopeset(T))$.

In case (b) we obtain the following sequence where we now observe that 
$T_{u_2}$ is a Hübner-sink of $\mu_{u_1}(T)$ and $T_{u_3}$ is a Hübner-sink of 
$\mu_{u_2}\mu_{u_1}(T)$.
\begin{center}
\begin{picture}(326,100)
\put(20,0){
\put(24,40){
	\put(0,50){\HBCenter{$\mu_{u_1}(T)$:}}
	\put(0,-2){
		\put(0,0){\HBCenter{\small$v_2$}}
		\put(0,36){\HBCenter{\small$u_1$}}
		\put(-27,18){\HBCenter{\small$w_3$}}
		\put(-27,-18){\HBCenter{\small$u_2$}}
		\put(27,18){\HBCenter{\small$w_2$}}
		\put(27,-18){\HBCenter{\small$u_3$}}
		\put(0,-36){\HBCenter{\small$w_1$}}
		\put(42,0){\HBCenter{\small$v_1$}}
	}
	\qbezier[13](0,-9)(0,-18)(0,-27)
 	\put(0,27){\vector(0,-1){18}}
	\put(27,-18){\put(-6,-4){\vector(-3,-2){15}}}
	\put(-27,-18){\put(6,-4){\vector(3,-2){15}}}
	\put(-6,-4){\vector(-3,-2){15}}
	\put(6,-4){\vector(3,-2){15}}
	\put(27,-9){\vector(0,1){18}}
	\put(-27,-9){\vector(0,1){18}}
	\put(6,0){\vector(1,0){30}}
	\put(0,36){\qbezier[13](-6,-4)(-13.5,-9)(-21,-14)}
	\put(0,36){\qbezier[13](6,-4)(13.5,-9)(21,-14)}
}
\put(144,40){
	\put(0,50){\HBCenter{$\mu_{f_2}\mu_{f_1}(T)$:}}
	\put(0,-2){
		\put(0,0){\HBCenter{\small$v_2$}}
		\put(0,36){\HBCenter{\small$u_1$}}
		\put(-27,18){\HBCenter{\small$w_3$}}
		\put(-27,-18){\HBCenter{\small$u_2$}}
		\put(27,18){\HBCenter{\small$w_2$}}
		\put(27,-18){\HBCenter{\small$u_3$}}
		\put(0,-36){\HBCenter{\small$w_1$}}
		\put(42,0){\HBCenter{\small$v_1$}}
	}
	\put(-27,-18){\qbezier[13](0,9)(0,18)(0,27)}
 	\put(0,27){\vector(0,-1){18}}
	\put(27,-18){\put(-6,-4){\vector(-3,-2){15}}}
	\put(-27,-18){\put(6,4){\vector(3,2){15}}}
	\put(6,-4){\vector(3,-2){15}}
	\put(-6,4){\vector(-3,2){15}}
	\put(27,-9){\vector(0,1){18}}
	\put(6,0){\vector(1,0){30}}
	\put(0,36){\qbezier[13](-6,-4)(-13.5,-9)(-21,-14)}
	\put(0,36){\qbezier[13](6,-4)(13.5,-9)(21,-14)}
	\put(0,-36){\qbezier[13](-6,4)(-13.5,9)(-21,14)}
}
\put(264,40){
	\put(0,50){\HBCenter{$\mu_{f_3}\mu_{f_2}\mu_{f_1}(T)$:}}
	\put(0,-2){
		\put(0,0){\HBCenter{\small$v_2$}}
		\put(0,36){\HBCenter{\small$u_1$}}
		\put(-27,18){\HBCenter{\small$w_3$}}
		\put(-27,-18){\HBCenter{\small$u_2$}}
		\put(27,18){\HBCenter{\small$w_2$}}
		\put(27,-18){\HBCenter{\small$u_3$}}
		\put(0,-36){\HBCenter{\small$w_1$}}
		\put(42,0){\HBCenter{\small$v_1$}}
	}
	\multiput(-27,-18)(54,0){2}{\qbezier[13](0,9)(0,18)(0,27)}
 	\put(0,27){\vector(0,-1){18}}
	\put(0,-9){\vector(0,-1){18}}
	\put(27,-18){\put(-6,4){\vector(-3,2){15}}}
	\put(-27,-18){\put(6,4){\vector(3,2){15}}}
	\put(5.4,3.6){\vector(3,2){16.2}}
	\put(-5.4,3.6){\vector(-3,2){16.2}}
	\put(6,0){\vector(1,0){30}}
	\put(0,36){\qbezier[13](-6,-4)(-13.5,-9)(-21,-14)}
	\put(0,36){\qbezier[13](6,-4)(13.5,-9)(21,-14)}
	\put(0,-36){\qbezier[13](-6,4)(-13.5,9)(-21,14)}
	\put(0,-36){\qbezier[13](6,4)(13.5,9)(21,14)}
}
}
\end{picture}
\end{center}
For each $i=1,2,3$ the rank and the degree of $T'_{u_i}$ equal 
$2d-b=d-f$ and $2c-a=c-e$ respectively. Hence 
$\mu(T'_{u_i})=\frac{c}{d}\ominus\frac{e}{f}$ and again 
$\slopeset(\mu_u(T))=\mu_{\frac{a}{b}}(\slopeset(T))$ follows.

Still having $x=u$ we now look at the case where $\End(T)$ has as quiver $Q_u$ 
(resp. $Q_w$). Then it is easily observed that $\mu_u$ yields the inverse 
process as in case (a) (resp. in case (b)) above.   
This concludes the assertion if $x=u$. The case $x=w$ is handled completely 
similar. We now assume that $x=v$. Since the arguments are the same as used for
$x=u$ except that the mutation sequence is substantially longer we will 
use certain abbreviations. 

We start considering first the case where $\End(T)$ has quiver $Q_w$.
Then $\frac{e}{f}<\frac{c}{d}<\frac{a}{b}$ and therefore we conclude as before 
that $c=a+e$ and $d=b+f$. 
We shall distinguish the following three cases:
$$
\text{(a)\ \ }f\leq b,\quad\quad
\text{(b)\ \ } \tfrac{1}{2}f\leq b<f,\quad\quad
\text{(c)\ \ }
\tfrac{1}{2}f<b.
$$

In case (a) the summand $T_{v_2}$ is a Hübner-source and $T_{u_1}$ a Hübner 
source of $\mu_{v_2}(T)$. The first two steps in the mutation sequence 
look as follows. 
We have indicated the rank of each summand as superscript. The degree is 
obtained by replacing $b$ by $a$ and $f$ by $e$.

\begin{center}
\begin{picture}(280,90)
\put(0,0){
	\put(0,30){
		\put(40,43){\HBCenter{$\mu_{v_2}(T)$:}}
		\put(0,-2){
			\put(0,30){\HBCenter{\scriptsize $w_2^{f}$}}
			\put(0,-10){\HBCenter{\scriptsize $w_1^{f}$}}
			\put(0,-30){\HBCenter{\scriptsize $w_3^{f}$}}
			\put(30,0){\HBCenter{\scriptsize $v_1^{b+f}$}}
			\put(50,30){\HBCenter{\scriptsize $u_2^{b}$}}
			\put(50,-10){\HBCenter{\scriptsize $u_1^{b}$}}
			\put(50,-30){\HBCenter{\scriptsize $u_3^{b}$}}
			\put(80,0){\HBCenter{\scriptsize$v_2^{2b-f}$}}
		}
		\qbezier[30](12,26)(40,15)(68,4)
		\qbezier[33](12,-26)(50,-25)(72,-6)
		\qbezier[27](12,-8.8)(40,-5)(62,-2)
		\multiput(0,30)(50,0){2}{\put(6,-6){\vector(1,-1){16}}}
		\multiput(0,-30)(50,0){2}{\put(6,6){\vector(1,1){18}}}
		\multiput(0,-10)(50,0){2}{\put(6,2){\vector(3,1){13}}}
		\put(6,30){\vector(1,0){38}}
		\put(6,-10){\vector(1,0){38}}
		\put(6,-30){\vector(1,0){38}}
		\put(42,0){\vector(1,0){20}}
	}
	\put(170,30){
		\put(40,43){\HBCenter{$\mu_{u_1}\mu_{v_2}(T)$:}}
		\put(0,-2){
			\put(0,30){\HBCenter{\scriptsize $w_2^{f}$}}
			\put(0,-10){\HBCenter{\scriptsize $w_1^{f}$}}
			\put(0,-30){\HBCenter{\scriptsize $w_3^{f}$}}
			\put(30,0){\HBCenter{\scriptsize $v_1^{b+f}$}}
			\put(50,30){\HBCenter{\scriptsize $u_2^{b}$}}
			\put(110,-10){\HBCenter{\scriptsize $u_1^{b-f}$}}
			\put(50,-30){\HBCenter{\scriptsize $u_3^{b}$}}
			\put(80,0){\HBCenter{\scriptsize$v_2^{2b-f}$}}
		}
		\qbezier[30](12,26)(40,15)(68,4)
		\qbezier[30](12,-26)(40,-15)(68,-4)
		\qbezier[50](12,-10)(55,-10)(98,-10)
		\multiput(0,30)(50,0){2}{\put(6,-6){\vector(1,-1){16}}}
		\multiput(0,-30)(50,0){2}{\put(6,6){\vector(1,1){18}}}
		\put(0,-10){\put(6,2){\vector(3,1){13}}}
		\put(80,0){\put(6,-2){\vector(3,-1){13}}}
		\put(6,30){\vector(1,0){38}}
		\put(6,-30){\vector(1,0){38}}
		\put(42,0){\vector(1,0){20}}
	}
}
\end{picture}
\end{center}
Now $T_{w_3}$ is a Hübner-source of $\mu_{u_1}\mu_{v_2}(T)$ since a relation 
starts in $w_3$. The resulting quiver after mutation is shown in the next 
illustration on the left.
Then  $T_{u_2}$ is a Hübner-source of $\mu_{w_3}\mu_{u_1}\mu_{v_2}(T)$
since $f\geq b$.
\begin{center}
\begin{picture}(280,95)
\put(0,0){
	\put(0,30){
		\put(64,43){\HBCenter{$\mu_{w_3}\mu_{u_1}\mu_{v_2}(T)$:}}
		\put(0,-2){
			\put(0,30){\HBCenter{\scriptsize $w_2^{f}$}}
			\put(0,-15){\HBCenter{\scriptsize $w_1^{f}$}}
			\put(64,0){\HBCenter{\scriptsize $w_3^{2b}$}}
			\put(30,0){\HBCenter{\scriptsize $v_1^{b+f}$}}
			\put(70,30){\HBCenter{\scriptsize $u_2^{b}$}}
			\put(132,-15){\HBCenter{\scriptsize $u_1^{b-f}$}}
			\put(64,-30){\HBCenter{\scriptsize $u_3^{b}$}}
			\put(100,0){\HBCenter{\scriptsize$v_2^{2b-f}$}}
		}
		\qbezier[45](12,26)(50,15)(88,4)
		\qbezier[60](12,-15)(65,-15)(118,-15)
		\multiput(0,30)(70,0){2}{\put(6,-6){\vector(1,-1){16}}}
		\put(0,-15){\put(6,3){\vector(2,1){15}}}
		\put(100,0){\put(6,-3){\vector(2,-1){18}}}
		\put(6,30){\vector(1,0){58}}
		\put(64,-24){\vector(0,1){18}}
		\put(40,0){\vector(1,0){13}}
		\put(71,0){\vector(1,0){13}}
	}
	\put(170,30){
		\put(64,43){\HBCenter{$\mu_{u_2}\mu_{w_3}\mu_{u_1}\mu_{v_2}(T)$:}}
		\put(0,-2){
			\put(0,30){\HBCenter{\scriptsize $w_2^{f}$}}
			\put(0,-15){\HBCenter{\scriptsize $w_1^{f}$}}
			\put(64,0){\HBCenter{\scriptsize $w_3^{2b}$}}
			\put(30,0){\HBCenter{\scriptsize $v_1^{b+f}$}}
			\put(132,30){\HBCenter{\scriptsize $u_2^{b-f}$}}
			\put(132,-15){\HBCenter{\scriptsize $u_1^{b-f}$}}
			\put(64,-30){\HBCenter{\scriptsize $u_3^{b}$}}
			\put(100,0){\HBCenter{\scriptsize$v_2^{2b-f}$}}
		}
		\qbezier[60](12,-15)(65,-15)(118,-15)
		\qbezier[60](12,30)(65,30)(118,30)
		\put(0,30){\put(6,-6){\vector(1,-1){16}}}
		\put(0,-15){\put(6,3){\vector(2,1){15}}}
		\put(100,0){\put(6,-3){\vector(2,-1){18}}}
		\put(100,0){\put(8,8){\vector(1,1){16}}}
		\put(64,-24){\vector(0,1){18}}
		\put(40,0){\vector(1,0){13}}
		\put(71,0){\vector(1,0){13}}
	}
}
\end{picture}
\end{center}
Proceeding further this way, we see that $T_{v_1}$ is a Hübner-source of 
$\mu_{u_2}\mu_{w_3}\mu_{u_1}\mu_{v_2}(T)$, again since $b\geq f$. The resulting 
quiver after mutation is shown below on the left. Then $T_{v_2}$ is a 
Hübner-sink of $\mu_{v_1}\mu_{u_2}\mu_{w_3}\mu_{u_1}\mu_{v_2}(T)$ since 
$2f-b-2f=-b<0$ since $b=0$ is not possible ($b=0$ would imply $f=0$ and then 
$\frac{a}{b}=\frac{e}{f}=\infty$).
\begin{center}
\begin{picture}(280,95)
\put(0,0){
	\put(0,30){
		\put(64,43){\HBCenter{$\mu_{v_1}\mu_{u_2}\mu_{w_3}\mu_{u_1}\mu_{v_2}(T)$:}}
		\put(0,-2){
			\put(30,30){\HBCenter{\scriptsize $w_2^{f}$}}
			\put(30,-15){\HBCenter{\scriptsize $w_1^{f}$}}
			\put(64,0){\HBCenter{\scriptsize $w_3^{2b}$}}
			\put(0,0){\HBCenter{\scriptsize $v_1^{b-f}$}}
			\put(132,30){\HBCenter{\scriptsize $u_2^{b-f}$}}
			\put(132,-15){\HBCenter{\scriptsize $u_1^{b-f}$}}
			\put(64,-30){\HBCenter{\scriptsize $u_3^{b}$}}
			\put(100,0){\HBCenter{\scriptsize$v_2^{2b-f}$}}
		}
		\qbezier[16](8,8)(16,16)(24,24)
		\qbezier[12](8,-4)(15,-7.5)(22,-11)
		\qbezier[50](42,30)(80,30)(118,30)
		\qbezier[50](42,-15)(80,-15)(118,-15)
		\put(30,30){\put(6,-6){\vector(1,-1){18}}}
		\put(30,-15){\put(6,3){\vector(2,1){15}}}
		\put(100,0){\put(6,-3){\vector(2,-1){18}}}
		\put(100,0){\put(8,8){\vector(1,1){16}}}
		\put(64,-24){\vector(0,1){18}}
		\put(54,0){\vector(-1,0){45}}
		\put(71,0){\vector(1,0){13}}
	}
	\put(170,30){
		\put(64,43){\HBCenter{$\mu_{v_2}\mu_{v_1}\mu_{u_2}\mu_{w_3}\mu_{u_1}\mu_{v_2}(T)$:}}
		\put(0,-2){
			\put(30,30){\HBCenter{\scriptsize $w_2^{f}$}}
			\put(30,-15){\HBCenter{\scriptsize $w_1^{f}$}}
			\put(60,0){\HBCenter{\scriptsize $w_3^{2b}$}}
			\put(0,0){\HBCenter{\scriptsize $v_1^{b-f}$}}
			\put(90,30){\HBCenter{\scriptsize $u_2^{b-f}$}}
			\put(90,-15){\HBCenter{\scriptsize $u_1^{b-f}$}}
			\put(60,-30){\HBCenter{\scriptsize $u_3^{b}$}}
			\put(120,0){\HBCenter{\scriptsize$v_2^{f}$}}
		}
		\qbezier[16](8,8)(16,16)(24,24)
		\qbezier[12](8,-4)(15,-7.5)(22,-11)
		\qbezier[26](42,30)(60,30)(78,30)
		\qbezier[26](42,-15)(60,-15)(78,-15)
		\qbezier[16](112,8)(104,16)(96,24)
		\qbezier[12](112,-4)(106,-7.5)(100,-11)
		\put(30,30){\put(6,-6){\vector(1,-1){18}}}
		\put(30,-15){\put(6,3){\vector(2,1){15}}}
		\put(60,0){\put(6,-3){\vector(2,-1){18}}}
		\put(60,0){\put(8,8){\vector(1,1){16}}}
		\put(60,-24){\vector(0,1){18}}
		\put(50,0){\vector(-1,0){41}}
		\put(111,0){\vector(-1,0){43}}
	}
}
\end{picture}
\end{center}
The final step just changes the direction of the arrow $u_3\rightar w_3$ but 
not the slope. We therefore see that in case (a) we have $\mu_v(T)\in\cT$ and 
$\slopeset(\mu_v(T))=\mu_{\frac{c}{d}}(\slopeset(T))$.
 
We now consider case (b), where we can copy the first step and start with 
$\mu_{v_2}(T)$ already calculated as in case (a) since again $T_{v_2}$ is a 
Hübner-source of $T$. 
Now $T_{u_1}$ is a Hübner-sink of $\mu_{v_2}(T)$.
\begin{center}
\begin{picture}(280,90)
\put(0,0){
	\put(0,30){
		\put(40,43){\HBCenter{$\mu_{v_2}(T)$:}}
		\put(0,-2){
			\put(0,30){\HBCenter{\scriptsize $w_2^{f}$}}
			\put(0,-10){\HBCenter{\scriptsize $w_1^{f}$}}
			\put(0,-30){\HBCenter{\scriptsize $w_3^{f}$}}
			\put(30,0){\HBCenter{\scriptsize $v_1^{b+f}$}}
			\put(50,30){\HBCenter{\scriptsize $u_2^{b}$}}
			\put(50,-10){\HBCenter{\scriptsize $u_1^{b}$}}
			\put(50,-30){\HBCenter{\scriptsize $u_3^{b}$}}
			\put(80,0){\HBCenter{\scriptsize$v_2^{2b-f}$}}
		}
		\qbezier[30](12,26)(40,15)(68,4)
		\qbezier[33](12,-26)(50,-25)(72,-6)
		\qbezier[27](12,-8.8)(40,-5)(62,-2)
		\multiput(0,30)(50,0){2}{\put(6,-6){\vector(1,-1){16}}}
		\multiput(0,-30)(50,0){2}{\put(6,6){\vector(1,1){18}}}
		\multiput(0,-10)(50,0){2}{\put(6,2){\vector(3,1){13}}}
		\put(6,30){\vector(1,0){38}}
		\put(6,-10){\vector(1,0){38}}
		\put(6,-30){\vector(1,0){38}}
		\put(42,0){\vector(1,0){20}}
	}
	\put(200,30){
		\put(10,43){\HBCenter{$\mu_{u_1}\mu_{v_2}(T)$:}}
		\put(0,-2){
			\put(0,30){\HBCenter{\scriptsize $w_2^{f}$}}
			\put(0,-10){\HBCenter{\scriptsize $w_1^{f}$}}
			\put(0,-30){\HBCenter{\scriptsize $w_3^{f}$}}
			\put(30,0){\HBCenter{\scriptsize $v_1^{b+f}$}}
			\put(50,30){\HBCenter{\scriptsize $u_2^{b}$}}
			\put(-30,-20){\HBCenter{\scriptsize $u_1^{f-b}$}}
			\put(50,-30){\HBCenter{\scriptsize $u_3^{b}$}}
			\put(80,0){\HBCenter{\scriptsize$v_2^{2b-f}$}}
		}
		\qbezier[30](12,26)(40,15)(68,4)
		\qbezier[30](12,-26)(40,-15)(68,-4)
		\put(-30,-20){\qbezier[50](11,0)(55,9)(99,18)}
		\multiput(0,30)(50,0){2}{\put(6,-6){\vector(1,-1){16}}}
		\multiput(0,-30)(50,0){2}{\put(6,6){\vector(1,1){18}}}
		\multiput(0,-10)(-25.5,-8.5){2}{\put(6,2){\vector(3,1){13}}}
		\put(6,30){\vector(1,0){38}}
		\put(6,-30){\vector(1,0){38}}
		\put(42,0){\vector(1,0){20}}
	}
}
\end{picture}
\end{center}
Clearly $T_{w_3}$ is a Hübner-source of $\mu_{u_1}\mu_{v_2}(T)$ and then
$T_{u_2}$ is a Hübner-sink of $\mu_{w_3}\mu_{u_1}\mu_{v_2}(T)$ since 
$b<f$.
\begin{center}
\begin{picture}(280,90)
\put(0,0){
	\put(30,30){
		\put(35,43){\HBCenter{$\mu_{w_3}\mu_{u_1}\mu_{v_2}(T)$:}}
		\put(0,-2){
			\put(0,30){\HBCenter{\scriptsize $w_2^{f}$}}
			\put(0,-15){\HBCenter{\scriptsize $w_1^{f}$}}
			\put(65,0){\HBCenter{\scriptsize $w_3^{2b}$}}
			\put(30,0){\HBCenter{\scriptsize $v_1^{b+f}$}}
			\put(70,30){\HBCenter{\scriptsize $u_2^{b}$}}
			\put(-30,-30){\HBCenter{\scriptsize $u_1^{f-b}$}}
			\put(65,-30){\HBCenter{\scriptsize $u_3^{b}$}}
			\put(102,0){\HBCenter{\scriptsize$v_2^{2b-f}$}}
		}
		\qbezier[30](12,26)(50,15)(88,4)
		\put(-30,-30){\qbezier[50](11,0)(65,14)(119,28)}
		\multiput(0,30)(70,0){2}{\put(6,-6){\vector(1,-1){16}}}
		\multiput(0,-15)(-25.5,-13.5){2}{\put(6,2){\vector(2,1){13}}}
		\put(6,30){\vector(1,0){58}}
		\put(65,-24){\vector(0,1){18}}
		\multiput(39,0)(34,0){2}{\vector(1,0){14}}
	}
	\put(200,30){
		\put(35,43){\HBCenter{$\mu_{u_2}\mu_{w_3}\mu_{u_1}\mu_{v_2}(T)$:}}
		\put(0,-2){
			\put(0,15){\HBCenter{\scriptsize $w_2^{f}$}}
			\put(0,-15){\HBCenter{\scriptsize $w_1^{f}$}}
			\put(65,0){\HBCenter{\scriptsize $w_3^{2b}$}}
			\put(30,0){\HBCenter{\scriptsize $v_1^{b+f}$}}
			\put(-30,30){\HBCenter{\scriptsize $u_2^{f-b}$}}
			\put(-30,-30){\HBCenter{\scriptsize $u_1^{f-b}$}}
			\put(65,-30){\HBCenter{\scriptsize $u_3^{b}$}}
			\put(102,0){\HBCenter{\scriptsize$v_2^{2b-f}$}}
		}
		\put(-30,-30){\qbezier[50](11,0)(65,14)(119,28)}
		\put(-30,30){\qbezier[50](11,0)(65,-14)(119,-28)}
		\multiput(0,-15)(-25.5,-13.5){2}{\put(6,2){\vector(2,1){13}}}
		\multiput(0,15)(-25.5,13.5){2}{\put(6,-2){\vector(2,-1){13}}}
		\put(65,-24){\vector(0,1){18}}
		\multiput(39,0)(34,0){2}{\vector(1,0){14}}
	}
}
\end{picture}
\end{center}
Now, since $b-f<0$ we have that $T_{v_1}$ is a Hübner-sink of 
$\mu_{u_2}\mu_{w_3}\mu_{u_1}\mu_{v_2}(T)$. The resulting quiver is shown 
on the left hand side in the next picture. Clearly $T_{v_2}$ is a Hübner-sink in
$\mu_{u_2}\mu_{w_3}\mu_{u_1}\mu_{v_2}(T)$.
\begin{center}
\begin{picture}(280,90)
\put(0,0){
	\put(30,30){
		\put(21,43){\HBCenter{$\mu_{v_1}\mu_{u_2}\mu_{w_3}\mu_{u_1}\mu_{v_2}(T)$:}}
		\put(0,-2){
			\put(0,15){\HBCenter{\scriptsize $w_2^{f}$}}
			\put(0,-15){\HBCenter{\scriptsize $w_1^{f}$}}
			\put(30,0){\HBCenter{\scriptsize $w_3^{2b}$}}
			\put(-30,0){\HBCenter{\scriptsize $v_1^{f-b}$}}
			\put(-30,30){\HBCenter{\scriptsize $u_2^{f-b}$}}
			\put(-30,-30){\HBCenter{\scriptsize $u_1^{f-b}$}}
			\put(30,-30){\HBCenter{\scriptsize $u_3^{b}$}}
			\put(67,0){\HBCenter{\scriptsize$v_2^{2b-f}$}}
		}
		\put(-30,-30){\qbezier[50](11,0)(48,13)(84,26)}
		\put(-30,30){\qbezier[50](11,0)(48,-13)(84,-26)}
		\qbezier[26](20,0)(-1,0)(-22,0)
		\multiput(0,-15)(-25.5,-13.5){2}{\put(6,2){\vector(2,1){13}}}
		\multiput(0,15)(-25.5,13.5){2}{\put(6,-2){\vector(2,-1){13}}}
		\put(-25.5,0){
			\put(6,-3.5){\vector(2,-1){13}}
			\put(6,3.5){\vector(2,1){13}}
			}
		\put(30,-24){\vector(0,1){18}}
		\put(39,0){\vector(1,0){14}}
	}
	\put(170,30){
		\put(21,43){\HBCenter{$\mu_{v_2}\mu_{v_1}\mu_{u_2}\mu_{w_3}\mu_{u_1}\mu_{v_2}(T)$:}}
		\put(0,-2){
			\put(24,24){\HBCenter{\scriptsize $w_2^{f}$}}
			\put(24,-24){\HBCenter{\scriptsize $w_1^{f}$}}
			\put(48,0){\HBCenter{\scriptsize $w_3^{2b}$}}
			\put(0,0){\HBCenter{\scriptsize $v_1^{f-b}$}}
			\put(72,24){\HBCenter{\scriptsize $u_2^{f-b}$}}
			\put(72,-24){\HBCenter{\scriptsize $u_1^{f-b}$}}
			\put(48,-34){\HBCenter{\scriptsize $u_3^{b}$}}
			\put(96,0){\HBCenter{\scriptsize$v_2^{2b-f}$}}
		}
		\multiput(32,-16)(48,0){2}{\vector(1,1){10}}
		\multiput(32,18)(48,0){2}{\vector(1,-1){10}}
		\multiput(60,-24)(0,48){2}{\vector(-1,0){28}}
		\put(84,0){\vector(-1,0){28}}
		\put(8,8){\vector(1,1){10}}
		\put(8,-8){\vector(1,-1){10}}
		\put(48,-28){\vector(0,1){22}}
		\qbezier[22](10,0)(24,0)(38,0)
		\put(48,0){
			\qbezier[11](8,8)(12,12)(16,16)
			\qbezier[11](8,-8)(12,-12)(16,-16)
		}
	}
}
\end{picture}
\end{center} 
Again, the last step only changes the direction of the arrow $u_3\rightar w_3$ 
but no slope. Hence we have shown that also in case (b) we have 
$\mu_v(T)\in\cT$ and that $\slopeset(\mu_v(T))=\mu_{\frac{c}{d}}(\slopeset(T))$.

We consider now the case (c) where $T_{v_2}$ is a Hübner-sink of $T$.
The resulting situation is shown in the next picture on the left.
Clearly $T_{u_1}$ is a sink of $\mu_{v_2}(T)$.
\begin{center}
\begin{picture}(280,90)
\put(0,0){
	\put(30,30){
		\put(25,43){\HBCenter{$\mu_{v_2}(T)$:}}
		\put(0,-2){
			\put(0,30){\HBCenter{\scriptsize $w_2^{f}$}}
			\put(0,-10){\HBCenter{\scriptsize $w_1^{f}$}}
			\put(0,-30){\HBCenter{\scriptsize $w_3^{f}$}}
			\put(30,0){\HBCenter{\scriptsize $v_1^{b+f}$}}
			\put(50,30){\HBCenter{\scriptsize $u_2^{b}$}}
			\put(50,-10){\HBCenter{\scriptsize $u_1^{b}$}}
			\put(50,-30){\HBCenter{\scriptsize $u_3^{b}$}}
			\put(-30,-1){\HBCenter{\scriptsize$v_2^{f-2b}$}}
		}
		\qbezier[40](-20,3)(11,15)(42,27)
		\qbezier[40](-22,-5)(1,-23)(42,-27)
		\qbezier[35](-18,-1)(12,-4.5)(42,-8)
		\qbezier[22](-18,0)(1,0)(20,0)
		\multiput(0,30)(-30,-30){2}{\put(6,-6){\vector(1,-1){16}}}
		\multiput(0,-30)(-30,30){2}{\put(7,7){\vector(1,1){17}}}
		\put(0,-10){\put(6,2){\vector(3,1){13}}}
		\put(-30,0){\put(6,-2){\vector(3,-1){15}}}
		\put(6,30){\vector(1,0){38}}
		\put(6,-10){\vector(1,0){38}}
		\put(6,-30){\vector(1,0){38}}
	}
	\put(200,30){
		\put(25,43){\HBCenter{$\mu_{u_1}\mu_{v_2}(T)$:}}
		\put(0,-2){
			\put(15,30){\HBCenter{\scriptsize $w_2^{f}$}}
			\put(30,-10){\HBCenter{\scriptsize $w_1^{f}$}}
			\put(15,-30){\HBCenter{\scriptsize $w_3^{f}$}}
			\put(60,0){\HBCenter{\scriptsize $v_1^{b+f}$}}
			\put(50,30){\HBCenter{\scriptsize $u_2^{b}$}}
			\put(0,-10){\HBCenter{\scriptsize $u_1^{f-b}$}}
			\put(50,-30){\HBCenter{\scriptsize $u_3^{b}$}}
			\put(-30,-1){\HBCenter{\scriptsize$v_2^{f-2b}$}}
		}
		\qbezier[40](-20,3)(11,15)(42,27)
		\qbezier[40](-22,-5)(1,-23)(42,-27)
		\qbezier[32](-18,0)(14,0)(46,0)
		\multiput(-30,0)(45,30){2}{\put(6,-6){\vector(3,-2){30}}}
		\multiput(-30,0)(45,-30){2}{\put(7,7){\vector(3,2){29}}}
		\put(30,-10){\put(6,4){\vector(3,1){13}}}
		\put(-30,0){\put(6,-2){\vector(3,-1){13}}}
		\put(21,30){\vector(1,0){23}}
		\put(9,-10){\vector(1,0){13}}
		\put(21,-30){\vector(1,0){23}}
	}
}
\end{picture}
\end{center}
Since $b\geq 0$ we have that $T_{w_3}$ is Hübner-source of 
$\mu_{u_1}\mu_{v_2}(T)$. Also $T_{u_2}$ is a Hübner-sink
of the resulting $\mu_{w_3}\mu_{u_1}\mu_{v_2}(T)$.
\begin{center}
\begin{picture}(280,90)
\put(0,0){
	\put(30,30){
		\put(25,43){\HBCenter{$\mu_{w_3}\mu_{u_1}\mu_{v_2}(T)$:}}
		\put(0,-2){
			\put(15,30){\HBCenter{\scriptsize $w_2^{f}$}}
			\put(30,-30){\HBCenter{\scriptsize $w_1^{f}$}}
			\put(90,30){\HBCenter{\scriptsize $w_3^{2b}$}}
			\put(60,0){\HBCenter{\scriptsize $v_1^{b+f}$}}
			\put(50,30){\HBCenter{\scriptsize $u_2^{b}$}}
			\put(0,-30){\HBCenter{\scriptsize $u_1^{f-b}$}}
			\put(90,0){\HBCenter{\scriptsize $u_3^{b}$}}
			\put(-30,-1){\HBCenter{\scriptsize$v_2^{f-2b}$}}
		}
		\qbezier[40](-20,3)(11,15)(42,27)
		\qbezier[60](-20,2)(31,15)(82,27)
		\put(90,6){\vector(0,1){18}}
		\put(15,30){\put(6,-6){\vector(3,-2){30}}}
		\put(-30,0){\put(7,7){\vector(3,2){29}}}
		\put(30,-30){\put(6,6){\vector(1,1){18}}}
		\put(62,2){\put(6,6){\vector(1,1){18}}}
		\put(-30,0){\put(6,-6){\vector(1,-1){18}}}
		\put(21,30){\vector(1,0){23}}
		\put(9,-30){\vector(1,0){13}}
	}
	\put(200,30){
		\put(25,43){\HBCenter{$\mu_{w_3}\mu_{u_1}\mu_{v_2}(T)$:}}
		\put(0,-2){
			\put(30,30){\HBCenter{\scriptsize $w_2^{f}$}}
			\put(30,-30){\HBCenter{\scriptsize $w_1^{f}$}}
			\put(90,30){\HBCenter{\scriptsize $w_3^{2b}$}}
			\put(60,0){\HBCenter{\scriptsize $v_1^{b+f}$}}
			\put(0,30){\HBCenter{\scriptsize $u_2^{f-b}$}}
			\put(0,-30){\HBCenter{\scriptsize $u_1^{f-b}$}}
			\put(90,0){\HBCenter{\scriptsize $u_3^{b}$}}
			\put(-30,-1){\HBCenter{\scriptsize$v_2^{f-2b}$}}
		}
		\qbezier[60](-20,2)(31,15)(82,27)
		\put(90,6){\vector(0,1){18}}
		\put(30,30){\put(6,-6){\vector(1,-1){18}}}
		\put(-30,0){\put(7,7){\vector(1,1){18}}}
		\put(30,-30){\put(6,6){\vector(1,1){18}}}
		\put(62,2){\put(6,6){\vector(1,1){18}}}
		\put(-30,0){\put(6,-6){\vector(1,-1){18}}}
		\put(9,30){\vector(1,0){13}}
		\put(9,-30){\vector(1,0){13}}
	}
}
\end{picture}
\end{center}
Again, since $b-f<0$ we have that $T_{v_1}$ is a Hübner-sink of 
$\mu_{w_3}\mu_{u_1}\mu_{v_2}(T)$. Clearly $T_{v_2}$ is a Hübner-source of 
$\mu_{v_1}\mu_{u_2}\mu_{w_3}\mu_{u_1}\mu_{v_2}(T)$.

\begin{center}
\begin{picture}(280,90)
\put(0,0){
	\put(0,30){
		\put(48,43){\HBCenter{$\mu_{v_1}\mu_{u_2}\mu_{w_3}\mu_{u_1}\mu_{v_2}(T)$:}}
		\put(0,-2){
			\put(72,24){\HBCenter{\scriptsize $w_2^{f}$}}
			\put(72,-24){\HBCenter{\scriptsize $w_1^{f}$}}
			\put(48,0){\HBCenter{\scriptsize $w_3^{2b}$}}
			\put(96,0){\HBCenter{\scriptsize $v_1^{f-b}$}}
			\put(24,24){\HBCenter{\scriptsize $u_2^{f-b}$}}
			\put(24,-24){\HBCenter{\scriptsize $u_1^{f-b}$}}
			\put(48,-34){\HBCenter{\scriptsize $u_3^{b}$}}
			\put(0,0){\HBCenter{\scriptsize$v_2^{f-2b}$}}
		}
		\put(88,-8){\vector(-1,-1){10}}
		\put(88,8){\vector(-1,1){10}}
		\put(64,-16){\vector(-1,1){10}}
		\put(64,16){\vector(-1,-1){10}}
		\multiput(32,-24)(0,48){2}{\vector(1,0){28}}
		\put(8,8){\vector(1,1){10}}
		\put(8,-8){\vector(1,-1){10}}
		\put(48,-28){\vector(0,1){22}}
		\qbezier[22](10,0)(24,0)(38,0)
		\put(48,0){\qbezier[22](10,0)(24,0)(38,0)}
	}
	\put(170,30){
		\put(48,43){\HBCenter{$\mu_{v_2}\mu_{v_1}\mu_{u_2}\mu_{w_3}\mu_{u_1}\mu_{v_2}(T)$:}}
		\put(0,-2){
			\put(72,24){\HBCenter{\scriptsize $w_2^{f}$}}
			\put(72,-24){\HBCenter{\scriptsize $w_1^{f}$}}
			\put(48,0){\HBCenter{\scriptsize $w_3^{2b}$}}
			\put(96,0){\HBCenter{\scriptsize $v_1^{f-b}$}}
			\put(24,24){\HBCenter{\scriptsize $u_2^{f-b}$}}
			\put(24,-24){\HBCenter{\scriptsize $u_1^{f-b}$}}
			\put(48,-34){\HBCenter{\scriptsize $u_3^{b}$}}
			\put(0,0){\HBCenter{\scriptsize$v_2^{f-2b}$}}
		}
		\put(88,-8){\vector(-1,-1){10}}
		\put(88,8){\vector(-1,1){10}}
		\put(64,-16){\vector(-1,1){10}}
		\put(64,16){\vector(-1,-1){10}}
		\multiput(32,-24)(0,48){2}{\vector(1,0){28}}
		\put(16,18){\vector(-1,-1){10}}
		\put(16,-18){\vector(-1,1){10}}
		\put(48,-28){\vector(0,1){22}}
		\put(10,0){\vector(1,0){28}}
		\put(48,0){
			\qbezier[11](-8,8)(-12,12)(-16,16)
			\qbezier[11](-8,-8)(-12,-12)(-16,-16)
		}
		\put(48,0){\qbezier[22](10,0)(24,0)(38,0)}
	}
}
\end{picture}
\end{center}
And again the last mutation in the sequence does only revert the direction of 
the arrow
$u_3\rightar w_3$ without changing any slope. Hence we have proved the 
assertion in case (c) and therefore in all possible cases where the quiver of 
$\End(T)$ is $Q_w$.

A similar calculation deals with the cases where $\End(T)$ has as quiver $Q_u$ 
or  $Q_v$. However these cases need distinction since in no step there is a 
possible choice for a vertex to be a Hübner-source or a Hübner-sink. These 
cases are therefore left to the interested reader. This concludes the proof of 
the statement.
\end{proof}

Since the remaining two tubular weight sequences require no new type of 
argument we restrict to give the proper definition and statement and leave the 
verification to the interested reader.

If the weight sequence is $(4,4,2)$, 
let  $\cT_{(4,4,2)}$ be the set of (isomorphism classes of) tilting sheaves
\[
T=\oplus_{i\in I} T_i \text{ with } I=\{u_1,u_2,u_3,v_1,v_2,v_3,w_1,w_2,w_3\},
\]
such that  
\begin{itemize}
\item the rank and degree of the indecomposable summands of $T$ are 
related as shown in the following table:
\vspace{.5ex}
\begin{center}
\begin{picture}(210,40)
\multiput(0,0)(0,20){3}{\line(1,0){210}}
\put(0,0){\line(0,1){40}}
\multiput(30,0)(20,0){10}{\line(0,1){40}}
\put(15,28){\HBCenter{$i$}}
\put(15,8){\HBCenter{$\frac{\deg(T_i)}{\rk(T_i)}$}}
\put(40,28){\HBCenter{$u_1$}}
\put(60,28){\HBCenter{$u_2$}}
\put(80,28){\HBCenter{$u_3$}}
\multiput(40,8)(40,0){2}{\HBCenter{$\frac{a}{b}$}}
\put(60,8){\HBCenter{$\frac{2a}{2b}$}}
\put(100,28){\HBCenter{$v_1$}}
\put(120,28){\HBCenter{$v_2$}}
\put(140,28){\HBCenter{$v_3$}}
\multiput(100,8)(40,0){2}{\HBCenter{$\frac{c}{d}$}}
\put(120,8){\HBCenter{$\frac{2c}{2d}$}}
\put(160,28){\HBCenter{$w_1$}}
\put(180,28){\HBCenter{$w_2$}}
\put(200,28){\HBCenter{$w_3$}}
\multiput(160,8)(40,0){2}{\HBCenter{$\frac{e}{f}$}}
\put(180,8){\HBCenter{$\frac{2e}{2f}$}}
\end{picture}
\end{center}
\item
$\slopeset(T)=\{\frac{a}{b},\frac{c}{d}, \frac{e}{f}\}$ is a Farey triple,
\item
the quiver with relations of $\End(T)$ is one shown in the middle of 
Figure~\ref{fig:ex-real}.
\end{itemize}
Note that $\cT_{4,4,2}$ is not empty by the construction in 
Section~\ref{ssec:TiFa}.

The mutation sequences in this case are then defined to be
\begin{align*}
\mu_u&=\mu_{w_1}\mu_{u_3}\mu_{v_3}\mu_{v_1}\mu_{u_2}\mu_{v_2}\mu_{u_2}\mu_{w_2}\mu_{u_2}\mu_{v_3},\\
\mu_v&=\mu_{u_1}\mu_{v_3}\mu_{w_3}\mu_{w_1}\mu_{v_2}\mu_{w_2}\mu_{v_2}\mu_{u_2}\mu_{v_2}\mu_{w_3},\\
\mu_w&=\mu_{v_1}\mu_{w_3}\mu_{u_3}\mu_{u_1}\mu_{c_2}\mu_{u_2}\mu_{w_2}\mu_{v_2}\mu_{w_2}\mu_{u_3}.
\end{align*}

\begin{prp} \label{prp442}
Let $\XX$ be a weighted projective line with weight sequence $(4,4,2)$.
Then for any $T\in\cT_{(4,4,2)}$ and $x\in{u,v,w}$ we have 
$\mu_x(T)\in \cT_{(4,4,2)}$ and $\slopeset(\mu_x(T))=\mu_{q_x}(\slopeset(T))$, 
where $q_x=\slope(T_{x_1})$.
\end{prp}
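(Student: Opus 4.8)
The plan is to follow the proof of Proposition~\ref{prp333} essentially verbatim, exploiting that the combinatorial set-up for $(4,4,2)$ is the exact analogue of the one for $(3,3,3)$: the summands of $T$ split into three blocks $\{u_1,u_2,u_3\}$, $\{v_1,v_2,v_3\}$, $\{w_1,w_2,w_3\}$ of constant slope $\frac{a}{b}$, $\frac{c}{d}$, $\frac{e}{f}$, now each carrying the rank pattern $(b,2b,b)$, $(d,2d,d)$, $(f,2f,f)$ dictated by the table. Fix $T\in\cT_{(4,4,2)}$ and $x\in\{u,v,w\}$ and put $T'=\mu_x(T)$; the source/sink (kernel/cokernel) symmetry reduces the number of directions that must be carried out in full, just as the case $x=w$ was reduced to $x=u$ in Proposition~\ref{prp333}. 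First I would record the order of the Farey triple $\slopeset(T)=\{\frac{a}{b},\frac{c}{d},\frac{e}{f}\}$: if $q_x$ is an outer element the Farey mutation $\mu_{q_x}$ replaces it by the mediant of the other two, and if $q_x$ is the middle element by their Farey difference. In either case Proposition~\ref{prop:rank-additivity} forces the linear relations among $a,b,c,d,e,f$ (for instance $b=d+f$, $a=c+e$ when $\frac{a}{b}=\frac{c}{d}\oplus\frac{e}{f}$) that are used throughout.

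The heart of the argument is to traverse the ten-term sequence defining $\mu_u$ (and likewise $\mu_v$, $\mu_w$) one Hübner mutation at a time, reproducing for $(4,4,2)$ the annotated chains of quivers drawn in the proof of Proposition~\ref{prp333}. At each step I would: (i)~decide whether the vertex being mutated is a Hübner-source or a Hübner-sink, combining Proposition~\ref{prop:h-source-sink} with the structural criteria of Proposition~\ref{prop:Huebner-char} (sinks and successors of Hübner-sinks are Hübner-sinks, sources and predecessors of Hübner-sources are Hübner-sources, and relations run from Hübner-sources to Hübner-sinks), reading off the sign of the relevant rank from Proposition~\ref{prop:rank-additivity}; (ii)~update the quiver with relations by the rule recorded in Section~\ref{subsec:Huebner}; and (iii)~update ranks and degrees from the defining triangle $T_k\ra\bigoplus_h T_h^{r}\ra T_k^\ast$ (or its dual), so that $\rk(T_k^\ast)$ is the sum of the ranks of the neighbours minus $\rk(T_k)$, and similarly for the degree. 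Writing the current rank of each summand as a superscript, exactly as in the $(3,3,3)$ pictures, keeps this transparent. At the end one reads off that $\End(T')$ again carries the quiver in the middle of Figure~\ref{fig:ex-real}, that the three new $x$-summands again occur with rank pattern $(b',2b',b')$, and, via Lemma~\ref{lem:Farey_triple_eq}, that their common slope is precisely the mutated fraction; this delivers $\mu_x(T)\in\cT_{(4,4,2)}$ and $\slopeset(\mu_x(T))=\mu_{q_x}(\slopeset(T))$ simultaneously.

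The obstacle here is combinatorial bulk rather than any conceptual novelty. Mutating the \emph{middle} block splits into subcases according to the signs of the rank differences met while the rank-$2$ vertex $x_2$ is mutated repeatedly (the exact analogue of cases (a)--(c) in Proposition~\ref{prp333}, governed now by inequalities such as $b\geq f$, $2b\geq f$ and their negations), and each admissible starting quiver must be treated on its own, since, as in the $(3,3,3)$ situation, no single vertex is forced to stay a source or a sink along the whole sequence. Because the ambient quiver has nine vertices and each of $\mu_u,\mu_v,\mu_w$ has length ten, there are considerably more intermediate diagrams to verify than before; yet every individual step instantiates a rule already exercised in Proposition~\ref{prp333}, so the verification introduces nothing new and may, as the authors indicate, be left to the reader. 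The one point demanding care is that the repeated occurrences of $\mu_{x_2}$ correctly unwind the rank-$2$ summand, so that the intermediate object produced by the first application of $\mu_{x_2}$ is the one on which the subsequent source/sink analysis is performed.
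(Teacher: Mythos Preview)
Your proposal is essentially the approach the paper intends: it gives no proof for this proposition, stating that the $(4,4,2)$ and $(6,3,2)$ cases ``require no new type of argument'' beyond Proposition~\ref{prp333} and leaving the verification to the reader. Two small corrections to your sketch, however. First, unlike $(3,3,3)$ and $(6,3,2)$, the class $\cT_{(4,4,2)}$ admits only \emph{one} quiver shape (the middle one in Figure~\ref{fig:ex-real}), so there is no case distinction by starting quiver. Second, the ten-term sequences $\mu_u,\mu_v,\mu_w$ here touch vertices from \emph{all three} slope blocks (e.g.\ $\mu_u$ begins with $\mu_{v_3}$ and ends with $\mu_{w_1}$, and $\mu_{u_2}$ occurs three times), so you cannot simply track ``the three new $x$-summands'': the intermediate steps move summands from every block around before two of the three slopes return to their original values, and checking that the final labelling and rank pattern $(b',2b',b')$ line up with the table is part of what must be verified.
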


In case the weight sequence is $(6,3,2)$ we need to
define three quivers $Q_x$ for $x\in\{u,v,w\}$ as follows:

\begin{figure}[!ht]
\begin{picture}(340,100)
\put(50,40){
	\put(-57,50){$Q_u$:}
	\put(0,0){
		\put(0,-20){\HVCenter{\small $v_3$}}
		\put(0,20){\HVCenter{\small $u_3$}}
		\put(-20,0){\HVCenter{\small $w_2$}}
		\put(-20,-40){\HVCenter{\small $u_1$}}
		\put(-20,40){\HVCenter{\small $v_2$}}
		\put(-44,0){\HVCenter{\small $w_1$}}
		\put(18,0){\HVCenter{\small $w_4$}}
		\put(20,-40){\HVCenter{\small $u_2$}}
		\put(20,40){\HVCenter{\small $v_1$}}
		\put(44,0){\HVCenter{\small $w_3$}}
	}
	\multiput(-20,40)(40,0){2}{\put(0,-8){\vector(0,-1){24}}}
	\put(0,20){\put(0,-8){\vector(0,-1){24}}}
	\multiput(0,-20)(0,40){2}{
		\put(5,5){\vector(1,1){10}}
	}
	\multiput(0,-20)(0,40){2}{
		\put(-5,5){\vector(-1,1){10}}
	}
	\put(20,-40){\put(-5,5){\vector(-1,1){10}}}
	\put(-20,-40){\put(5,5){\vector(1,1){10}}}
	\put(-26,0){\vector(-1,0){11}}
	\put(26,0){\vector(1,0){11}}
	\put(-20,0){\qbezier[12](5,5)(10,10)(15,15)}
	\put(20,0){\qbezier[12](-5,5)(-10,10)(-15,15)}
	\multiput(-20,-20)(40,0){2}{\qbezier[20](0,12)(0,0)(0,-12)}
}
\put(165,40){
	\put(-57,50){$Q_v$:}
	\put(0,0){
		\put(0,-20){\HVCenter{\small $v_3$}}
		\put(0,20){\HVCenter{\small $u_3$}}
		\put(-20,0){\HVCenter{\small $w_2$}}
		\put(-20,-40){\HVCenter{\small $u_1$}}
		\put(-20,40){\HVCenter{\small $v_2$}}
		\put(-44,0){\HVCenter{\small $w_1$}}
		\put(18,0){\HVCenter{\small $w_4$}}
		\put(20,-40){\HVCenter{\small $u_2$}}
		\put(20,40){\HVCenter{\small $v_1$}}
		\put(44,0){\HVCenter{\small $w_3$}}
	}
	\multiput(-20,40)(40,0){2}{\put(0,-8){\vector(0,-1){24}}}
	\multiput(-20,0)(40,0){2}{\put(0,-8){\vector(0,-1){24}}}
	\multiput(0,-20)(-20,20){2}{
		\put(5,5){\vector(1,1){10}}
	}
	\multiput(0,-20)(20,20){2}{
		\put(-5,5){\vector(-1,1){10}}
	}
	\qbezier[20](0,12)(0,0)(0,-12)
	\put(-26,0){\vector(-1,0){11}}
	\put(26,0){\vector(1,0){11}}
	\multiput(0,20)(-20,-60){2}{\qbezier[12](5,5)(10,10)(15,15)}
	\multiput(0,20)(20,-60){2}{\qbezier[12](-5,5)(-10,10)(-15,15)}
}
\put(280,40){
	\put(-57,50){$Q_w$:}
	\put(0,0){
		\put(0,-20){\HVCenter{\small $v_3$}}
		\put(0,20){\HVCenter{\small $u_3$}}
		\put(-20,0){\HVCenter{\small $w_2$}}
		\put(-20,-40){\HVCenter{\small $u_1$}}
		\put(-20,40){\HVCenter{\small $v_2$}}
		\put(-44,0){\HVCenter{\small $w_1$}}
		\put(18,0){\HVCenter{\small $w_4$}}
		\put(20,-40){\HVCenter{\small $u_2$}}
		\put(20,40){\HVCenter{\small $v_1$}}
		\put(44,0){\HVCenter{\small $w_3$}}
	}
	\multiput(-20,0)(40,0){2}{\put(0,-8){\vector(0,-1){24}}}
	\put(0,20){\put(0,-8){\vector(0,-1){24}}}
	\multiput(-20,-40)(20,60){2}{
		\put(5,5){\vector(1,1){10}}
	}
	\multiput(20,-40)(-20,60){2}{
		\put(-5,5){\vector(-1,1){10}}
	}
	\put(20,0){\put(-5,5){\vector(-1,1){10}}}
	\put(-20,0){\put(5,5){\vector(1,1){10}}}
	\put(-26,0){\vector(-1,0){11}}
	\put(26,0){\vector(1,0){11}}
	\put(0,-20){\qbezier[12](5,5)(10,10)(15,15)}
	\put(0,-20){\qbezier[12](-5,5)(-10,10)(-15,15)}
	\multiput(-20,20)(40,0){2}{\qbezier[20](0,12)(0,0)(0,-12)}
}
\end{picture}
\caption{Quivers with relations for the weight sequence $(6,3,2)$.}
\label{fig:quivers-632}
\end{figure}
Let $\cT_{(6,3,2)}$ to be the set of (isomorphism classes of) all tilting sheaves
\[
T=\oplus_{i\in I} T_i \text{ with } I=\{u_1,u_2,u_3,v_1,v_2,v_3,v_4,w_1,w_2,w_3\},
\]
such that
\begin{itemize}
\item the  rank and degree of the indecomposable summands of $T$ are 
related as shown in the following table:
\vspace{.5ex}
\begin{center}
\begin{picture}(230,40)
\multiput(0,0)(0,20){3}{\line(1,0){230}}
\put(0,0){\line(0,1){40}}
\multiput(30,0)(20,0){11}{\line(0,1){40}}
\put(15,28){\HBCenter{$i$}}
\put(15,8){\HBCenter{$\frac{\deg(T_i)}{\rk(T_i)}$}}
\put(40,28){\HBCenter{$u_1$}}
\put(60,28){\HBCenter{$u_2$}}
\put(80,28){\HBCenter{$u_3$}}
\multiput(40,8)(20,0){2}{\HBCenter{$\frac{a}{b}$}}
\put(80,8){\HBCenter{$\frac{2a}{2b}$}}
\put(100,28){\HBCenter{$v_1$}}
\put(120,28){\HBCenter{$v_2$}}
\put(140,28){\HBCenter{$v_3$}}
\put(160,28){\HBCenter{$v_4$}}
\multiput(100,8)(40,0){2}{\HBCenter{$\frac{c}{d}$}}
\multiput(120,8)(40,0){2}{\HBCenter{$\frac{2c}{2d}$}}
\put(180,28){\HBCenter{$w_1$}}
\put(200,28){\HBCenter{$w_2$}}
\put(220,28){\HBCenter{$w_3$}}
\multiput(180,8)(20,0){2}{\HBCenter{$\frac{e}{f}$}}
\put(220,8){\HBCenter{$\frac{2e}{2f}$}}
\end{picture}
\end{center}
\item
$\slopeset(T)=\{\frac{a}{b},\frac{c}{d}, \frac{e}{f}\}$ is a Farey triple,
\item
the quiver with relations of $\End(T)$ is one of the three quivers 
of Figure~\ref{fig:quivers-632}. 
\end{itemize}
Note that $\cT_{6,3,2}$ is not empty by the construction in 
Section~\ref{ssec:TiFa}.

The mutations sequences are defined as
\begin{align*}
\mu_u&=\mu_{u_3}\mu_{u_2}\mu_{u_1},\\
\mu_v&=\mu_{v_3}\mu_{v_2}\mu_{v_1},\\
\mu_w&=\mu_{w_3}\mu_{w_1}\mu_{v_3}\mu_{w_4}\mu_{u_2}\mu_{w_3}\mu_{w_4}\mu_{w_2}\mu_{u_1}\mu_{w_1}\mu_{w_2}.
\end{align*}

\begin{prp} \label{prp632}
Let $\XX$ be a weighted projective line with weight sequence $(6,3,2)$.
Then for any $T\in\cT_{(6,3,2)}$ and $x\in\{u,v,w\}$ we have 
$\mu_x(T)\in \cT_{(6,3,2)}$ and $\slopeset(\mu_x(T))=\mu_{q_x}(\slopeset(T))$, 
where $q_x=\slope(T_{x_1})$.
\end{prp}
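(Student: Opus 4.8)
The plan is to follow verbatim the strategy of the proof of Proposition~\ref{prp333}, now applied to the three quivers $Q_u$, $Q_v$, $Q_w$ of Figure~\ref{fig:quivers-632} and the mutation sequences $\mu_u$, $\mu_v$, $\mu_w$ defined above. Fix $T\in\cT_{(6,3,2)}$ with $\slopeset(T)=\{\tfrac{a}{b},\tfrac{c}{d},\tfrac{e}{f}\}$ a Farey triple and with $\End(T)$ given by one of $Q_u$, $Q_v$, $Q_w$. First I would use Proposition~\ref{prop:rank-additivity} to pin down all the ranks and degrees of the summands from the structural data: additivity of $\rk$ on $Q_T$ together with the slope pattern recorded in the table forces the Farey relations (of the shape $b=d+f$ and $a=c+e$, according to which of $\tfrac{a}{b},\tfrac{c}{d},\tfrac{e}{f}$ is the innermost slope), exactly as in the $(3,3,3)$ case. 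These explicit ranks are what let Proposition~\ref{prop:h-source-sink} decide, at each vertex, whether one has a Hübner-source or a Hübner-sink.

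For the two short directions the sequences $\mu_u=\mu_{u_3}\mu_{u_2}\mu_{u_1}$ and $\mu_v=\mu_{v_3}\mu_{v_2}\mu_{v_1}$ behave exactly like the sequences $\mu_u,\mu_w$ of Proposition~\ref{prp333}. The summands in the $u$- (resp.\ $v$-) branch all share the slope $q_u$ (resp.\ $q_v$), so Proposition~\ref{prop:h-source-sink} shows, according to the position of $q_x$ in the Farey triple, that they are simultaneously all Hübner-sources or all Hübner-sinks. Mutating them one at a time, each new summand arises as a cokernel (resp.\ a kernel), whose rank and degree I would compute by additivity on the defining short exact sequence; Lemma~\ref{lem:Farey_triple_eq} then identifies the common new slope as $q_v\oplus q_w$ or $q_v\ominus q_w$ (correspondingly for $u$), which is precisely the new slope prescribed by $\mu_{q_x}(\slopeset(T))$. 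A short inspection via the mutation rule \cite[Kor.~4.16]{Hueb} and its dual shows that the resulting quiver is again one of $Q_u$, $Q_v$, $Q_w$, so that $\mu_x(T)\in\cT_{(6,3,2)}$.

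The bulk of the work, and the main obstacle, is the direction $x=w$, whose sequence
\[
\mu_w=\mu_{w_3}\mu_{w_1}\mu_{v_3}\mu_{w_4}\mu_{u_2}\mu_{w_3}\mu_{w_4}\mu_{w_2}\mu_{u_1}\mu_{w_1}\mu_{w_2}
\]
has eleven steps and plays the role of the long sequence $\mu_v$ in Proposition~\ref{prp333}. I would process the eleven mutations in order, at each step (i)~determining whether the vertex being mutated is a Hübner-source or a Hübner-sink, using Proposition~\ref{prop:h-source-sink} together with the bookkeeping of Proposition~\ref{prop:Huebner-char} (sinks and sources of the current quiver, successors of Hübner-sinks, predecessors of Hübner-sources, and the endpoints of relations); (ii)~updating the quiver with relations by the rule of \cite[Kor.~4.16]{Hueb} or its dual; and (iii)~recording the new rank and degree via additivity on the relevant short exact sequence. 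As in case $x=v$ of Proposition~\ref{prp333}, the determination in (i) is not uniform: it splits into a handful of subcases governed by inequalities among $b,d,f$ (for instance $f\le b$ versus $b<f$, with a finer split near $b=\tfrac12 f$), and I expect the final mutation of the sequence to merely reverse one arrow without changing any slope. Tracking these subcases and confirming that each terminal quiver again lies in $\{Q_u,Q_v,Q_w\}$ with slope set $\mu_{q_w}(\slopeset(T))$ is the delicate, bookkeeping-heavy part of the argument; once it is verified the statement follows. Since no new phenomenon occurs, the same scheme of calculations (with the quivers and sequences of the $(4,4,2)$ case) settles Proposition~\ref{prp442} as well.
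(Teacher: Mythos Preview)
Your proposal is correct and matches the paper's approach exactly: the paper gives no proof of Proposition~\ref{prp632} at all, stating only that ``the remaining two tubular weight sequences require no new type of argument'' and leaving the verification to the reader. Your outline---processing the mutation sequences step by step, using Proposition~\ref{prop:h-source-sink} and Proposition~\ref{prop:Huebner-char} to determine Hübner-source/sink at each stage, updating via \cite[Kor.~4.16]{Hueb}, tracking ranks and degrees by additivity, and splitting into subcases governed by inequalities among $b,d,f$---is precisely the verification the paper is asking the reader to supply, modelled on Proposition~\ref{prp333}.
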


\begin{rem}
Another warning seems in place:
Although the mutation sequences $\mu_u$, $\mu_v$ and $\mu_w$ are 
involutive on Farey triples in the sense of Lemma~\ref{lem:farey_mutation},
they are not
involutive on the isomorphism classes of tilting sheaves in the four tubular 
cases.
\end{rem}

\subsection{Second proof of Theorem~\ref{thm:main}}
For the four tubular types $(2,2,2,2)$, $(3,3,3)$, $(4,4,2)$ and $(6,3,2)$ the 
Propositions~\ref{prp2222},
\ref{prp333}, \ref{prp442} and~\ref{prp632}
respectively provide a recursive procedure to construct a 
$k$-embedding of the 3-regular tree $\TT_3$  (identified with the exchange
graph of Farey-triples) into the exchange graph of 
tilting sheaves over the weighted projective line of the corresponding
tubular weight type. Now, this exchange graph can be identified 
by~\cite{BaKuLe10} with the exchange graph of cluster tilting objects in
the respective tubular cluster category. Thus, in view of 
Section~\ref{ssec:beg-pf} we are done.\hfill$\Box$



\begin{thebibliography}{25}
\bibitem{AmOp}
C.~Amiot, S.~Oppermann:
\emph{Cluster equivalence and graded derived equivalence.}
Preprint \url{arXiv:1003.4916}.

\bibitem{BarGei12}
M. Barot, Ch. Geiß:
\emph{Tubular cluster algebras I: categorification},
Math. Z. DOI 10.1007/s00209-011-0905-8

\bibitem{BaKuLe10}
M. Barot, D. Kussin, H. Lenzing: 
\emph{The cluster category of a canonical algebra}, 
Trans. Amer. Math. Soc. \textbf{362} (2010), no. 8, 4313–4330.

\bibitem{BeGePo}
I.N. Bern{\v{s}}te{\u\i}n,  I.M. Gel{\cprime}fand, V.A. Ponomarev:
Coxeter functors, and Gabriel's theorem. (Russian) 
Uspehi Mat. Nauk 28 (1973), no. 2(170), 19–33. 

\bibitem{BeOpWrXX}
M.A. Bertani-Økland, S. Oppermann, A. Wrålsen
\emph{Graded mutation in cluster categories coming from hereditary categories 
with a tilting object},
\url{arXiv:1009.4812v1 [math.RT]}

\bibitem{BIRSc09}
A. Buan, O. Iyama, I. Reiten, J. Scott: 
\emph{Cluster structures for 2-Calabi-Yau categories and unipotent groups}, 
Compos. Math. \textbf{145} (2009), no. 4, 1035–1079.

\bibitem{DehKel08}
R. Dehy, B. Keller: 
\emph{On the combinatorics of rigid objects in 2-Calabi-Yau categories}, 
Int. Math. Res. Not. IMRN \textbf{2008}, no. 11, Art. ID rnn029, 17 pp. 

\bibitem{FeShTu08}
A. Felikson, M. Shapiro, P. Tumarkin:
\emph{Skew-symmetric cluster algebras of finite mutation type},
J. Eur. Math. Soc. \textbf{14} (2012), 1135-1180. 

\bibitem{FeShTu10}
A. Felikson, M. Shapiro, P. Tumarkin:
\emph{Cluster algebras of finite mutation type via unfoldings},
Int. Math. Res. Notices  (2012), 1768-1804.

\bibitem{FeShTu11}
A. Felikson, M. Shapiro, P. Tumarkin: 
\emph{Cluster algebras and triangulated orbifolds},
\url{arXiv:1111.3449v3 [math.CO]}, 54 pp.

\bibitem{FeShThTu12}
A. Felikson, M. Shapiro, H. Thomas, P. Tumarkin:
\emph{Growth rate of cluster algebras},
\url{arXiv:1203.5558v1 [math.CO]}, 22 pp.

\bibitem{FoGon06a}
V. Fock, A. Goncharov: 
\emph{Moduli spaces of local systems and higher Teichmüller theory}, 
Publ. Math. Inst. Hautes Études Sci. No. \textbf{103} (2006), 1–211. 

\bibitem{FoGon09a}
V. Fock, A. Goncharov:
\emph{Cluster ensembles, quantization and the dilogarithm},
Ann. Sci. Éc. Norm. Supér. (4) \textbf{42} (2009), no. 6, 865–930. 

\bibitem{FST08}
S. Fomin, M. Shapiro, D. Thurston: 
\emph{Cluster algebras and triangulated surfaces. I. Cluster complexes}, 
Acta Math. 201 (2008), no. 1, 83–-146

\bibitem{FZ03}
S. Fomin, A. Zelevinsky:
\emph{Cluster algebras. II. Finite type classification},
Invent. Math. \textbf{154} (2003), no. 1, 63--121.

\bibitem{GeLe87}
W.~Geigle, H.~Lenzing:
\emph{A class of weighted projective curves arising in representation theory 
of finite-dimensional algebras}, 
SLNM 1273 (1987), 265--297.

\bibitem{Hueb}
T.~Hübner: 
\emph{Exzeptionelle Vektorbündel und Reflektionen an Kippgarben über 
projektiven gewichteten Kurven}, 
Dissertation, University of Paderborn, 1996.

\bibitem{GJ}
G.~Jasso:
\emph{El crecimiento de un álgebra de conglomerados de tipo tubular},
Master thesis, Universidad Nacional Autónoma de México, 2011.
\url{http://www.matem.unam.mx/~reptheory/thesis/master/2011_gustavo_jasso.pdf}

\bibitem{Keller05}
B. Keller: 
\emph{On triangulated orbit categories}, 
Doc. Math. \textbf{10} (2005), 551–581.

\bibitem{Keller08}
B.~Keller:
\emph{Calabi-Yau triangulated categories.} 
Trends in representation theory of algebras and related topics, 467–489, 
EMS Ser. Congr. Rep., Eur. Math. Soc., Zürich, 2008.

\bibitem{Keller11}
B. Keller:
\emph{On cluster theory and quantum dilogarithm identities},
ICRA XIV survey lecture notes (Tokyo 2010)
\url{arXiv:1102.4148v4 [math.RT]}

\bibitem{KelYan11}
B. Keller, Dong Yang: 
\emph{Derived equivalences from mutations of quivers with potential}, 
Adv. Math. \textbf{226} (2011), no. 3, 2118–2168

\bibitem{LenMel00}
H. Lenzing, H. Meltzer:
\emph{The automorphism group of the derived category for a weighted 
projective line},
Comm. Algebra \textbf{28} (2000), 1685–1700.

\bibitem{Na11}
A. Nájera Chávez:
\emph{On the c-vectors and g-vectors of the Markov cluster algebra},
\url{arXiv:1112.3578v1 [math.CO]}, 11pp.

\bibitem{Ri84}
C.M. Ringel: 
\emph{Tame algebras and integral quadratic forms}, 
Lecture Notes in Mathematics, 1099. Springer-Verlag, Berlin, 1984. xiii+376 pp. 

\bibitem{RouZim03}
R. Rouquier, A. Zimmermann: 
\emph{Picard groups for derived module categories},
Proc. London Math. Soc. (3) \textbf{87} (2003), no. 1, 197–225.
\end{thebibliography}
\end{document}